\newcommand{\+}{\protect\nobreakdash-}
\renewcommand{\:}{\colon}
\newcommand{\rarrow}{\longrightarrow}
\newcommand{\ot}{\otimes}
\newcommand{\lrarrow}{\mskip.5\thinmuskip\relbar\joinrel\relbar\joinrel
 \rightarrow\mskip.5\thinmuskip\relax}
\newcommand{\bu}{{\text{\smaller\smaller$\scriptstyle\bullet$}}}
\DeclareMathOperator{\Hom}{Hom}
\DeclareMathOperator{\Ext}{Ext}
\DeclareMathOperator{\Tor}{Tor}
\DeclareMathOperator{\Id}{Id}
\DeclareMathOperator{\id}{id}
\newcommand{\FP}{\mathrm{FP}}
\newcommand{\kappaP}{\kappa\text{-}\mathrm{P}}
\newcommand{\Modl}{{\operatorname{\mathsf{--Mod}}}}
\newcommand{\Modr}{{\operatorname{\mathsf{Mod--}}}}
\newcommand{\Modrfl}{{\operatorname{\mathsf{Mod_{fl}--}}}}
\newcommand{\Modrkapfl}{{\operatorname{\mathsf{Mod_{\kappa-fl}--}}}}
\newcommand{\Sets}{\mathsf{Sets}}
\newcommand{\Ab}{\mathsf{Ab}}
\newcommand{\Funct}{\mathsf{Funct}}
\newcommand{\Fil}{\mathsf{Fil}}
\newcommand{\fp}{\mathsf{fp}}
\newcommand{\ad}{\mathsf{ad}}
\newcommand{\sop}{{\mathsf{op}}}
\newcommand{\cM}{\mathcal M}
\newcommand{\cS}{\mathcal S}
\newcommand{\cT}{\mathcal T}
\newcommand{\sA}{\mathsf A}
\newcommand{\sB}{\mathsf B} 
\newcommand{\sC}{\mathsf C}
\newcommand{\sD}{\mathsf D}
\newcommand{\sE}{\mathsf E}
\newcommand{\sF}{\mathsf F}
\newcommand{\sG}{\mathsf G}
\newcommand{\sJ}{\mathsf J}
\newcommand{\sK}{\mathsf K}
\newcommand{\sL}{\mathsf L}
\newcommand{\sM}{\mathsf M}
\newcommand{\sS}{\mathsf S}
\newcommand{\sT}{\mathsf T}
\newcommand{\sX}{\mathsf X}
\newcommand{\sY}{\mathsf Y}
\newcommand{\bC}{\mathbf C}
\newcommand{\bH}{\mathbf H}
\newcommand{\bD}{\mathbf D}
\newcommand{\boZ}{\mathbb Z}
\newcommand{\Section}[1]{\bigskip\section{#1}\medskip}
\theoremstyle{plain}
\newtheorem{thm}{Theorem}[section]
\newtheorem*{thm0}{Theorem 0}
\newtheorem*{thma}{Theorem A}
\newtheorem*{thmb}{Theorem B}
\newtheorem*{propa}{Proposition A}
\newtheorem*{propb}{Proposition B}
\newtheorem{lem}[thm]{Lemma}
\newtheorem{prop}[thm]{Proposition}
\newtheorem{cor}[thm]{Corollary}
\theoremstyle{definition}
\newtheorem{rem}[thm]{Remark}
\begin{document}

\title{Generalized periodicity theorems}

\author{Leonid Positselski}

\address{Institute of Mathematics, Czech Academy of Sciences \\
\v Zitn\'a~25, 115~67 Praha~1, Czech Republic}

\email{positselski@math.cas.cz}

\begin{abstract}
 Let $R$ be a ring and $\sS$ be a class of strongly finitely presented
($\FP_\infty$) $R$\+modules closed under extensions, direct summands,
and syzygies.
 Let $(\sA,\sB)$ be the (hereditary complete) cotorsion pair generated
by $\sS$ in $\Modr R$, and let $(\sC,\sD)$ be the (also hereditary
complete) cotorsion pair in which $\sC=\varinjlim\sA=\varinjlim\sS$.
 We show that any $\sA$\+periodic module in $\sC$ belongs to $\sA$,
and any $\sD$\+periodic module in $\sB$ belongs to~$\sD$.
 Further generalizations of both results are obtained, so that we get
a common generalization of the flat/projective and fp\+projective
periodicity theorems, as well as a common generalization of
the fp\+injective/injective and cotorsion periodicity theorems.
 Both are applicable to modules over an arbitrary ring, and in fact,
to Grothendieck categories.
\end{abstract}

\maketitle

\tableofcontents

\section*{Introduction}
\medskip

\setcounter{subsection}{-1}

\subsection{{}} \label{introd-seven-items}
 \emph{Periodicity theorems} in homological algebra apply to
the following setup.
 Let $R$ be an associative ring and
\begin{equation} \label{periodicity-sequence}
 0\lrarrow M\lrarrow L\lrarrow M\lrarrow0 \tag{$*$}
\end{equation}
be a short exact sequence of (right) $R$\+modules with
the leftmost term isomorphic to the rightmost one.
 Then it is known that
\begin{enumerate}
\item if the $R$\+module $M$ is flat and the $R$\+module $L$ is
projective, then the $R$\+mod\-ule $M$ is projective
(Benson and Goodearl 2000~\cite{BG}, rediscovered by
Neeman in~2008~\cite{Neem});
\item if the exact sequence~\eqref{periodicity-sequence} is pure and
the $R$\+module $L$ is pure-projective, then the $R$\+module $M$ is
pure-projective (Simson 2002~\cite{Sim});
\item if the exact sequence~\eqref{periodicity-sequence} is pure and
the $R$\+module $L$ is pure-injective, then the $R$\+module $M$ is
pure-injective (\v St\!'ov\'\i\v cek 2014~\cite{Sto});
\item in particular, if the $R$\+module $M$ is fp\+injective and
the $R$\+module $L$ is injective, then the $R$\+module $M$ is injective;
\item if the $R$\+module $L$ is cotorsion, then the $R$\+module $M$ is
cotorsion (Bazzoni, Cort\'es-Izurdiaga, and Estrada 2017~\cite{BCE});
\item if the ring $R$ is right coherent and the right $R$\+module $L$
is fp\+projective, then the $R$\+module $M$ is fp\+projective
(\v Saroch and \v St\!'ov\'\i\v cek 2018~\cite{SarSt});
\item over any ring $R$, if the $R$\+module $L$ is fp\+projective,
then the $R$\+module $M$ is weakly fp\+projective
(Bazzoni, Hrbek, and the present author 2022~\cite{BHP}).
\end{enumerate}

 Periodicity phenomena are linked to behavior of the modules of
cocycles in acyclic complexes.
 This means that the assertions~(1\+-7) can be restated as follows:
\begin{enumerate}
\renewcommand{\theenumi}{\arabic{enumi}$^{\text{c}}$}
\item in any acyclic complex of projective modules with flat modules
of cocycles, the modules of cocycles are actually projective (so
the complex is contractible);
\item in any pure acyclic complex of pure-projective modules,
the modules of cocycles are pure-projective (so the complex is
contractible);
\item in any pure acyclic complex of pure-injective modules,
the modules of cocycles are pure-injective (so the complex is
contractible);
\item in any acyclic complex of injective modules with fp\+injective
modules of cocycles, the modules of cocycles are actually injective
(so the complex is contractible);
\item in any acyclic complex of cotorsion modules, the modules of
cocycles are cotorsion;
\item in any acyclic complex of fp\+projective right modules over
a right coherent ring, the modules of cocycles are fp\+projective;
\item in any acyclic complex of fp\+projective modules (over any ring),
the modules of cocycles are weakly fp\+projective.
\end{enumerate}
 We refer to the introduction to the paper~\cite{BHP} for 
a more detailed discussion of the periodicity
theorems~(1\+-7) and~(1$^\text{c}$\+-7$^\text{c}$).

\subsection{{}} \label{introd-theorem-zero}
 The aim of this paper is to obtain a common generalization
of~(1) and~(6\+-7), and also a common generalization of~(4) and~(5),
in the context of a chosen class of modules or objects in
a Grothendieck category.
 Let us start with presenting the most symmetric and nicely looking
formulation of a special case of our main results, and then proceed
to further generalizations.

 Let $R$ be a ring.
 An $R$\+module is said to be \emph{strongly finitely presented}
if it has an (infinite) resolution by finitely generated projective
$R$\+modules.
 In the terminology of the book~\cite{GT}, such modules are called
``$\FP_\infty$\+modules''.

 Let $\sS$ be a class (up to an isomorphism, of course, a set) of
strongly finitely presented (right) $R$\+modules.
 Assume that the free $R$\+module $R$ belongs to $\sS$, and that
the class of modules $\sS$ is closed under direct summands,
extensions, and kernels of epimorphisms in $\Modr R$.
 In particular, for any module $S\in\sS$ there exists a (finitely
generated) projective $R$\+module $P$ together with an $R$\+module
epimorphism $P\rarrow S$ whose kernels also belongs to~$\sS$.
 The latter property is expressed by saying that ``the class
of modules ~$\sS$ is closed under syzygies''.

 Denote by $\sB=\sS^{\perp_1}$ the class of all $R$\+modules $B$
such that $\Ext^1_R(S,B)=0$ for all $S\in\sS$.
 Furthermore, denote by $\sA={}^{\perp_1}\sB$ the class of all
$R$\+modules $A$ such that $\Ext^1_R(A,B)=0$ for all $B\in\sB$.
 The pair of classes of modules $(\sA,\sB)$ is called
\emph{the cotorsion pair generated by\/ $\sS$ in\/ $\Modr R$}.

 Let $\sC=\varinjlim\sS$ denote the class of all $R$\+modules that
can be obtained as direct limits of diagrams of modules from $\sS$,
indexed by directed posets.
 Since $\sS$ is a class of finitely presented modules, one can see
that $\varinjlim\sS$ coincides with the direct limit closure of
$\sS$ in $\Modr R$ \,\cite{Len,CB,Kra}.

 Furthermore, since $\sS$ is a class of strongly finitely presented
modules ($\FP_2$ is sufficient), the class $\sC$ is closed under 
extensions in $\Modr R$ \,\cite{AT}.
 Taking into account the description of $\sA$ as the class of all
direct summands of transfinitely iterated extensions of modules
from $\sS$ \,\cite[Corollary~6.14]{GT}, one concludes that
$\sA\subset\sC$.
 Hence $\sC=\varinjlim\sA$ is the class of all direct limits of
modules from~$\sA$.

 Denote by $\sD=\sC^{\perp_1}$ the class of all $R$\+modules $D$
such that $\Ext^1_R(C,D)=0$ for all $C\in\sC$.
 Then one has $\sA\subset\sC$ and $\sB\supset\sD$.

 Part~(a) of the following theorem is one of the main results of this
paper, while part~(b) follows rather easily from a result of Bazzoni,
Cort\'es-Izurdiaga, and Estrada~\cite[Theorem~4.7]{BCE} together with
a result of Angeleri H\"ugel and Trlifaj~\cite[Corollary~2.4]{AT}.

\begin{thm0}
 Let $R$ be a ring and\/ $\sS$ be a class of (strongly) finitely
presented $R$\+modules, containing the free $R$\+module $R$ and
closed under direct summands, extensions, and kernels of epimorphisms.
 Put\/ $\sB=\sS^{\perp_1}$, \ $\sA={}^{\perp_1}\sB$, \
$\sC=\varinjlim\sS$, and\/ $\sD=\sC^{\perp_1}$.
 Then the following assertions hold: \par
\textup{(a)} For any short exact sequence~\eqref{periodicity-sequence}
with $L\in\sA$ and $M\in\sC$, one has $M\in\sA$.
 In other words, in any acyclic complex of modules from\/ $\sA$ with
the modules of cocycles belonging to\/ $\sC$, the modules of cocycles
actually belong to\/~$\sA$. \par
\textup{(b)} For any short exact sequence~\eqref{periodicity-sequence}
with $L\in\sD$ and $M\in\sB$, one has $M\in\sD$.
 In other words, in any acyclic complex of modules from\/ $\sD$ with
the modules of cocycles belonging to\/ $\sB$, the modules of cocycles
actually belong to\/~$\sD$.
\end{thm0}

 Theorem~0(a) is a common generalization of items~(1)
or~(1$^\text{c}$) and~(6) or~(6$^\text{c}$) on
the list of Section~\ref{introd-seven-items}.
 Taking $\sS$ to be the class of all finitely generated projective
$R$\+modules, one obtains the flat/projective periodicity theorem
of Benson--Goodearl~\cite[Theorem~2.5]{BG} and
Neeman~\cite[Remark~2.15]{Neem} as a particular case of Theorem~0(a).
 Assuming the ring $R$ to be right coherent and taking $\sS$ to be
the class of all finitely presented right $R$\+modules, one obtains
the fp\+projective periodicity theorem of \v Saroch and
\v St\!'ov\'\i\v cek~\cite[Example~4.3]{SarSt} as a particular case
of Theorem~0(a).

 Theorem~0(b) is a common generalization of items~(4)
or (4$^\text{c}$) (for coherent rings) and~(5)
or~(5$^\text{c}$) on the list of Section~\ref{introd-seven-items}.
 Assuming the ring $R$ to be right coherent and taking $\sS$ to be
the class of all finitely presented right $R$\+modules, one obtains
the fp\+injective/injective periodicity theorem, essentially due to
\v St\!'ov\'\i\v cek~\cite[Corollary~5.5]{Sto} (see
also~\cite[Theorem~1.2(1) or~5.1(1)]{BCE}), as a particular case
of Theorem~0(b).
 Taking $\sS$ to be the class of all finitely generated projective
$R$\+modules, one obtains the cotorsion periodicity theorem of Bazzoni,
Cort\'es-Izurdiaga, and Estrada~\cite[Theorem~1.2(2) or~5.1(2)]{BCE}
as a particular case of Theorem~0(b).

\subsection{{}} \label{introd-theorem-A}
 Both parts~(a) and~(b) of Theorem~0 admit far-reaching generalizations
in several directions simultaneously (allowing, in particular, to
drop the coherence assumptions on the ring $R$ in the preceding two
paragraphs).
 Let us state these results.

 We consider a Grothendieck abelian category~$\sK$.
 For any class of objects $\sS\subset\sK$, let
$\varinjlim\sS\subset\sK$ denote the class of all direct limits in $\sK$
of diagrams of objects from $\sS$ indexed by directed posets.
 Furthermore, given a regular cardinal~$\kappa$, we let
$\varinjlim^{(\kappa)}\sS\subset\sK$ denote the class of all direct
limits in $\sK$ of diagrams of objects from $\sS$ indexed by
\emph{$\kappa$\+directed} posets.
 Here a poset $X$ is said to be $\kappa$\+directed if each of its
subsets of cardinality less than~$\kappa$ has an upper bound in~$X$.

 The following theorem is the main result of this paper,
formulated in full generality and strength.
 It is a generalization of Theorem~0(a).

\begin{thma}
 Let\/ $\sK$ be a Grothendieck abelian category, and let $\kappa$~be
a regular cardinal such that\/ $\sK$ is a locally $\kappa$\+presentable
category.
 Let\/ $\sS\subset\sK$ be a class of (some) $\kappa$\+presentable
objects closed under transfinitely iterated extensions indexed by
ordinals smaller than~$\kappa$.
 Put\/ $\sC=\varinjlim^{(\kappa)}\sS\subset\sK$, and assume that
the class\/ $\sC$ is deconstructible in\/~$\sK$.
 Denote by\/ $\sA=\Fil(\sS)^\oplus$ the class of all direct summands of
transfinitely iterated extensions of objects from\/ $\sS$ in\/~$\sK$.
 Let\/ $\sB'=\sS^{\perp_{\ge1}}\cap\sC$ be the class of all objects
$B\in\sC$ such that\/ $\Ext_\sK^n(S,B)=0$ for all $S\in\sS$ and $n\ge1$.
 Let\/ $\sA'=\sC\cap{}^{\perp_1}\sB'$ be the class of all objects
$A\in\sC$ such that\/ $\Ext_\sK^1(A,B)=0$ for all $B\in\sB'$
(so\/ $\sA\subset\sA'\subset\sC$).
 Then, in any acyclic complex of objects from\/ $\sA$ with the objects
of cocycles belonging to\/ $\sC$, the objects of cocycles actually
belong to\/~$\sA'$.
\end{thma}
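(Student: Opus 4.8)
The plan is to restate the conclusion as the acyclicity of a family of complexes of abelian groups, and then to establish that acyclicity by transfinitely filtering the given complex. First I would record the following elementary homological fact: for any acyclic complex $X^\bullet$ with objects of cocycles $Z^n$ and any object $B$ with $\Ext^1_\sK(X^n,B)=0$ for all $n$, computing the cohomology of $\Hom_\sK(X^\bullet,B)$ directly from the short exact sequences $0\rarrow Z^n\rarrow X^n\rarrow Z^{n+1}\rarrow0$ yields $H^n(\Hom_\sK(X^\bullet,B))\cong\Ext^1_\sK(Z^{n+2},B)$ (up to a convention-dependent reindexing), naturally in~$B$. Now let $X^\bullet$ be the complex of the statement, with $X^n\in\sA$ and $Z^n\in\sC$, and let $B\in\sB'=\sS^{\perp_{\ge1}}\cap\sC$; then $B\in\sS^{\perp_1}$, so by Eklof's lemma together with the additivity of $\Ext^1$ one has $\Ext^1_\sK(A,B)=0$ for every $A\in\sA=\Fil(\sS)^\oplus$, in particular $\Ext^1_\sK(X^n,B)=0$. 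Hence $\Hom_\sK(X^\bullet,B)$ is acyclic if and only if $\Ext^1_\sK(Z^n,B)=0$ for all $n$; and since $Z^n\in\sC$ is given, the validity of this for all $B\in\sB'$ and all $n$ says precisely that $Z^n\in\sC\cap{}^{\perp_1}\sB'=\sA'$ for all $n$. So it suffices to prove that $\Hom_\sK(X^\bullet,B)$ is acyclic for each $B\in\sB'$. (Note that this scheme can only deliver membership in $\sA'$, not in $\sA$, because the small building blocks introduced below merely have cocycles in $\sS^\oplus$.)

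Next, fix $B\in\sB'$ and suppose $X^\bullet$ has been written as the union of a continuous well-ordered chain of subcomplexes $0=X^\bullet_0\subset X^\bullet_1\subset\dots\subset X^\bullet_\alpha\subset\dots\subset X^\bullet=\bigcup_{\alpha<\lambda}X^\bullet_\alpha$ such that each term $X^n_\alpha$ lies in $\sA$ and each quotient $X^\bullet_{\alpha+1}/X^\bullet_\alpha$ is an acyclic complex all of whose terms and cocycles lie in $\sS^\oplus$. I would then prove $\Hom_\sK(X^\bullet_\alpha,B)$ acyclic by transfinite induction on $\alpha$, the case $\alpha=\lambda$ being the goal. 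The case $\alpha=0$ is trivial. At a successor $\alpha=\gamma+1$, the degreewise-exact sequence of complexes $0\rarrow X^\bullet_\gamma\rarrow X^\bullet_{\gamma+1}\rarrow X^\bullet_{\gamma+1}/X^\bullet_\gamma\rarrow0$ has cokernels $X^n_{\gamma+1}/X^n_\gamma\in\sS^\oplus$ with $\Ext^1_\sK(X^n_{\gamma+1}/X^n_\gamma,B)=0$, so applying $\Hom_\sK(-,B)$ produces a short exact sequence of complexes of abelian groups
\[
 0\rarrow\Hom_\sK(X^\bullet_{\gamma+1}/X^\bullet_\gamma,\,B)\rarrow\Hom_\sK(X^\bullet_{\gamma+1},B)\rarrow\Hom_\sK(X^\bullet_\gamma,B)\rarrow0 ;
\]
its left term is acyclic by the homological fact above (the terms and cocycles of $X^\bullet_{\gamma+1}/X^\bullet_\gamma$ lie in $\sS^\oplus$, hence have vanishing $\Ext^1_\sK$ into $B$), its right term is acyclic by the induction hypothesis, so its middle term is acyclic. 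At a limit ordinal $\alpha$ one has $\Hom_\sK(X^\bullet_\alpha,B)=\varprojlim_{\gamma<\alpha}\Hom_\sK(X^\bullet_\gamma,B)$, a continuous well-ordered inverse limit of acyclic complexes of abelian groups whose transition maps are surjective in each degree (since $X^n_{\gamma'}/X^n_\gamma\in\sA$ forces $\Ext^1_\sK(X^n_{\gamma'}/X^n_\gamma,B)=0$ and hence $\Hom_\sK(X^n_{\gamma'},B)\twoheadrightarrow\Hom_\sK(X^n_\gamma,B)$); since $\varprojlim^1$ vanishes on continuous well-ordered towers of surjections of abelian groups, such a limit of acyclic complexes is again acyclic. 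This completes the induction.

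The remaining step, namely the construction of the chain $(X^\bullet_\alpha)_{\alpha<\lambda}$, is the one I expect to be the genuine obstacle: one must filter $X^\bullet$ compatibly with its differential so that the partial terms stay in $\sA$ while every subquotient complex is acyclic with terms and cocycles in $\sS^\oplus$. This is exactly where all the hypotheses do their work. Since the objects of $\sS$ are $\kappa$\+presentable and $\sS$ is closed under transfinitely iterated extensions of length $<\kappa$, one has $\Fil_{<\kappa}(\sS)=\sS$, and every $\kappa$\+presentable object of $\sC=\varinjlim^{(\kappa)}\sS$ is a retract of an object of $\sS$, that is, lies in $\sS^\oplus$; and since $\sC$ is deconstructible, it admits a deconstruction \emph{set} consisting of $\kappa$\+presentable objects, which therefore lies in $\sS^\oplus$. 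Using this, I would run the standard Hill-lemma / Eklof--Trlifaj enlargement argument in the Grothendieck category $\mathrm{Ch}(\sK)$: from a sufficiently small subcomplex $W^\bullet\subset X^\bullet$ one builds, degree by degree using a fixed representation of each $X^n$ as a direct summand of an $\sS$\+filtered object and a fixed $\sS^\oplus$\+filtration of each $Z^n$, a small subcomplex $\overline W{}^\bullet\supset W^\bullet$ that is acyclic, has all terms and cocycles in $\sS^\oplus$, lies degreewise in $\sA$, and whose quotient $X^\bullet/\overline W{}^\bullet$ is again an acyclic complex with terms in $\sA$ and cocycles in $\sC$; transfinite iteration of this enlargement yields the chain. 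Equivalently, this step amounts to showing that the class of acyclic complexes with terms in $\sA$ and objects of cocycles in $\sC$ is deconstructible in $\mathrm{Ch}(\sK)$, with a deconstruction set of acyclic complexes whose terms and cocycles all lie in $\sS^\oplus$. Granting this, everything above is formal, and the theorem follows.
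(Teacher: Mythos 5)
Your first reduction (the objects of cocycles $Z^n$ lie in $\sA'$ if and only if $\Hom_\sK(X^\bu,B)$ is acyclic for every $B\in\sB'$) and the transfinite induction you run once the filtration of $X^\bu$ is granted are both correct, and the first step coincides with the paper's. The gap is precisely in the step you flag as the obstacle: the existence of a filtration of $X^\bu$ by acyclic subcomplexes whose consecutive quotients have terms \emph{and objects of cocycles} in $\sS^\oplus$. This is not a routine Hill-lemma/purification construction; it is essentially equivalent to a statement strictly stronger than the theorem. For a degreewise exact sequence of acyclic complexes, the induced sequences on objects of cocycles are again exact (left exactness of kernels, plus surjectivity obtained by lifting a preimage under the differential of the quotient complex along the degreewise epimorphism). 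Hence your filtration would exhibit each $Z^n$ as a transfinitely iterated extension of objects of $\sS^\oplus$ inside $\sC$, and therefore, by the Eklof lemma applied in the exact category $\sC$, as an object of $\sA$ itself --- not merely of $\sA'$. (So your parenthetical remark that the scheme ``can only deliver $\sA'$'' is backwards: if the building blocks really had cocycles in $\sS^\oplus$, you would get $\sA$.) In the special case $\sK=\Modr R$, $\kappa=\aleph_0$, $\sS=$ finitely generated projectives, your claimed deconstruction says that every acyclic complex of projective modules with flat cocycles is filtered by contractible complexes of finitely generated projectives; this contains the Benson--Goodearl/Neeman flat/projective periodicity theorem, which is the deep input and is not obtainable by a standard enlargement argument.

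Concretely, here is where the enlargement fails. To keep the cocycles of the quotient complex in $\sC$ you must take the cocycles $\overline{Z}{}^n$ of the small subcomplex to be $\kappa$\+pure subobjects of $Z^n$. But the closure process --- alternating between the Hill families of the terms, adding preimages along the differential to force acyclicity of the subcomplex, and $\kappa$\+purifying inside $Z^n$ --- stabilizes only after enough iterations that $\overline{Z}{}^n$ is a $\mu$\+presentable object of $\sC$ for some regular $\mu>\kappa$, and such objects are \emph{not} retracts of objects of $\sS$: your retract observation applies only to $\kappa$\+presentable objects of $\varinjlim^{(\kappa)}\sS$. For $\kappa=\aleph_0$ and $\sS$ the finitely generated projectives, purification produces countably generated flat (not finitely generated projective) cocycles, and $\Ext^1$ from a countably presented flat module into an object of $\sB'$ need not vanish; so the left-hand term in your successor step is no longer acyclic and the induction collapses. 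The paper circumvents this entirely: it filters the complex with terms in $\Fil(\sS)$ by \emph{arbitrary} (non-acyclic, with uncontrolled cocycles) bounded below complexes of objects of $\sS$ using \v St\!'ov\'\i\v cek's Hill lemma for complexes, replaces the target $B\in\sB'$ by the acyclic complex $B\rarrow J^\bu$ coming from an injective resolution of $B$ in the exact category $\sC$ (which has enough injectives because deconstructibility makes it of Grothendieck type), and reduces to the statement that every morphism from a complex of $\kappa$\+pure-projective objects to a $\kappa$\+pure acyclic complex is null-homotopic --- which is where the Neeman--\v St\!'ov\'\i\v cek theorem, the genuinely deep ingredient absent from your argument, is invoked.
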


 The following proposition applies in the case of the countable
cardinal $\kappa=\aleph_0$.
 It is a supplementary comment on Theorem~A, providing some sufficient
conditions for validity of the main assumption of the theorem.

\begin{propa}
 Let\/ $\sK$ be a locally finitely presentable abelian category,
and let\/ $\sS\subset\sK$ be a class of (some) finitely presentable
objects closed under extensions in\/~$\sK$.
 Put\/ $\sC=\varinjlim\sS\subset\sK$.
 Then \par
\textup{(i)} If the class\/ $\sS$ consists of (some) objects of type\/
$\FP_2$, then the class\/ $\sC$ is closed under extensions in\/~$\sK$.
\par
\textup{(ii)} If the class\/ $\sC$ is closed under extensions in\/ $\sK$,
then the class\/ $\sC$ is deconstructible in\/~$\sK$.
\end{propa}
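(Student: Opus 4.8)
For part~(i), let $0\to X\to Y\to Z\to0$ be a short exact sequence in~$\sK$ with $X$, $Z\in\sC$; we must show $Y\in\sC$. The plan is to reduce by a pullback to the case $Z\in\sS$ and then realize the extension as a pushout of a ``small'' one, following Angeleri H\"ugel and Trlifaj~\cite{AT}. Writing $Z=\varinjlim_{j\in J}Z_j$ with $Z_j\in\sS$ and $J$ a directed poset, and pulling the sequence back along the morphisms $Z_j\to Z$, one obtains --- using exactness of direct limits in~$\sK$ --- short exact sequences $0\to X\to Y_j\to Z_j\to0$ with $Y=\varinjlim_j Y_j$; since $\sC$ is closed under direct limits, it suffices to treat the case $Z\in\sS$. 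Here I would use the hypothesis that $Z$ is of type~$\FP_2$, which guarantees that $\Ext_\sK^1(Z,-)$ commutes with direct limits over directed posets: writing $X=\varinjlim_{i\in I}X_i$ with $X_i\in\sS$, the class in $\Ext_\sK^1(Z,X)=\varinjlim_i\Ext_\sK^1(Z,X_i)$ representing our extension comes from a class in $\Ext_\sK^1(Z,X_{i_0})$ for some $i_0\in I$; that is, $Y$ is the pushout of a short exact sequence $0\to X_{i_0}\to Y_{i_0}\to Z\to0$ along $X_{i_0}\to X$. Since $\sS$ is closed under extensions, $Y_{i_0}\in\sS$, and forming for each $i\ge i_0$ the pushout of this sequence along $X_{i_0}\to X_i$ one gets short exact sequences $0\to X_i\to Y_i\to Z\to0$ with $Y_i\in\sS$, and $Y=\varinjlim_{i\ge i_0}Y_i\in\sC$ because pushouts commute with direct limits. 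The only step that genuinely uses the $\FP_2$ hypothesis is the commutation of $\Ext_\sK^1(Z,-)$ with direct limits; everything else is formal manipulation with pullbacks and pushouts.

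For part~(ii), recall that deconstructibility of $\sC$ means $\sC=\Fil(\sC_0)$ for some \emph{set} of objects $\sC_0\subset\sC$. I would take $\sC_0$ to be a set of representatives of the $\kappa$\+presentable objects of~$\sK$ lying in~$\sC$, for a suitable uncountable regular cardinal $\kappa$ depending only on~$\sK$. The inclusion $\Fil(\sC_0)\subset\sC$ is then immediate from the assumption that $\sC$ is closed under extensions, together with its automatic closure under direct limits, which handles the limit steps of a transfinitely iterated extension; this is the only use of the extension hypothesis. For the reverse inclusion the two inputs are: (1)~the class $\sC=\varinjlim\sS$ is closed under pure subobjects and under pure quotients in~$\sK$, a general property of direct-limit closures of finitely presentable objects (Lenzing, Crawley-Boevey; see~\cite{Len,CB}); and (2)~in a locally finitely presentable category, for $\kappa$ large enough (depending only on~$\sK$), any pure subobject $C'\subset C$ together with one further element can be enlarged to a pure subobject $C''\subset C$ with $C''/C'$ being $\kappa$\+presentable --- the usual L\"owenheim--Skolem construction, iterating countably many times the operation of adjoining solutions to the finitely presented systems that become solvable in~$C$, each step adding only a $\kappa$\+presentable amount.

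Granting (1) and~(2), I would construct, for a given $C\in\sC$, a continuous increasing chain $0=C_0\subset C_1\subset\dots\subset C_\mu=C$ of pure subobjects of~$C$ with every quotient $C_{\alpha+1}/C_\alpha$ being $\kappa$\+presentable: successor steps use~(2), and limit steps take unions, a union of pure subobjects being pure. For each~$\alpha$, since $C_\alpha\subset C_{\alpha+1}$ are both pure in $C\in\sC$, the quotient $C/C_\alpha$ lies in~$\sC$ by~(1), and $C_{\alpha+1}/C_\alpha$ is a pure subobject of $C/C_\alpha$ (a standard fact about purity), hence it too lies in~$\sC$; being $\kappa$\+presentable, it belongs to~$\sC_0$. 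Thus the chain $(C_\alpha)_{\alpha\le\mu}$ exhibits $C$ as a transfinitely iterated extension of objects of~$\sC_0$, so $C\in\Fil(\sC_0)$ and therefore $\sC=\Fil(\sC_0)$. I expect the main obstacle to lie in point~(2): the relative L\"owenheim--Skolem construction producing small pure subobjects over a prescribed pure subobject, done in an arbitrary locally finitely presentable Grothendieck category rather than a module category, has to be carried out with some care --- whereas point~(1) and the inclusion $\Fil(\sC_0)\subset\sC$ are comparatively soft.
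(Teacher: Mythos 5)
Both parts of your proposal are correct. For part~(i) your argument is essentially the paper's own: the reduction by pullback to $Z\in\sS$ is the inclusion $\sX*\varinjlim\sY\subset\varinjlim(\sX*\sY)$, and the pushout step using commutation of $\Ext^1_\sK(Z,{-})$ with direct limits is exactly the $\FP_2$ input of Proposition~\ref{FP2-varinjlim-extension-closed}, following~\cite{AT} and~\cite[Proposition~8.4]{PS5}.

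For part~(ii) you take a genuinely different route. The paper isolates the statement ``$\varinjlim\sS$ is weakly deconstructible'' (Proposition~\ref{weakly-deconstructible-prop}) and proves it by transporting the problem along the equivalence $\sC\simeq\Modrfl\cS$ of Proposition~\ref{pure-exact-structure-prop}(c), where $\cS$ is a small preadditive category equivalent to $\sS$; deconstructibility of the class of flat $\cS$\+modules is then the known result of~\cite[Lemma~1]{BBE}, and the extension hypothesis is used only to pass from weak deconstructibility to deconstructibility, exactly as in your argument. You instead run the purification (relative L\"owenheim--Skolem) argument directly in $\sK$: this is the standard ``Kaplansky class closed under direct limits'' route (\cite[Lemma~6.9]{HT}, \cite[Lemma~2.5(2)]{ST0}, \cite[Lemma~10.5]{GT}), which the paper itself acknowledges in Remark~\ref{proposition-A-ii-remark} as available for $\kappa=\aleph_0$ but unavailable for higher cardinals, since there $\sC$ is closed only under $\kappa$\+direct limits. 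Your identification of step~(2) as the real work is accurate; the one point you should make explicit when carrying it out is why the successor quotients stay small when the pure subobject $C'$ is large: a system of equations with constants in $C'$ that is solvable in $C$ is already solvable in $C'$ by purity of $C'$, so at each of the countably many stages one only needs to adjoin solutions for a set of representatives of systems modulo $C'$, and that set is bounded in terms of $C''/C'$ rather than $C'$. What the paper's route buys is that all of this set-theoretic bookkeeping is outsourced to the module category $\Modr\cS$, where elements are available and~\cite{BBE} applies verbatim; what your route buys is independence from the functor-category machinery, at the cost of redoing the purification in an arbitrary locally finitely presentable Grothendieck category. Your remaining steps (pure subobjects and pure quotients of objects of $\sC$ stay in $\sC$; if $C_\alpha\subset C_{\alpha+1}$ are both pure in $C$ then $C_{\alpha+1}/C_\alpha$ is pure in $C/C_\alpha$; directed unions of pure subobjects are pure) are all sound.
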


 Let us empasize that deconstructibility in Theorem~A and Proposition~A
is understood in the strong sense of the word.
 So, a class of objects $\sC$ is \emph{deconstructible} if it is closed
under transfinitely iterated extensions and contains a subset $\sT
\subset\sC$ such that all objects from $\sC$ are transfinitely iterated
extensions of objects from~$\sT$.

 Taking $\kappa=\aleph_0$ and assuming $\sS$ to be a class of strongly
finitely presentable ($\FP_\infty$) objects closed under extensions
in $\sK$ makes the assertions of Proposition~A applicable, so
the assumption of Theorem~A is satisfied.
 This makes Theorem~0(a) a~particular case of Theorem~A with
Proposition~A (for $\sK=\Modr R$).

 Taking $\sS$ to be the class of all $\kappa$\+presentable objects
in $\sK$ makes the assumption of Theorem~A satisfied as well,
since $\sC=\sK$ in this case.
 So one obtains the theorem of Bazzoni, Hrbek, and the present
author (\cite[Theorem~4.1]{BHP} for $\kappa=\aleph_0$, listed above
as item~(7) or~(7$^\text{c}$) in the case of $\sK=\Modr R$;
or~\cite[Remark~4.11]{BHP} for other~$\kappa$) as a particular case
of Theorem~A.

 For another particular case of Theorem~A with Proposition~A
arising in the context of locally coherent exact categories,
see~\cite[Theorems~7.1 and~8.3]{Plce}.

\subsection{{}} \label{introd-theorem-B}
 The next theorem and proposition, taken together, form a generalization
of Theorem~0(b).
 Theorem~B, which is the main claim, is rather easily deduced from
a result of \v St\!'ov\'\i\v cek and the present
author~\cite[Theorem~9.1]{PS6} (which, in turn, is a generalization
of~\cite[Theorem~4.7]{BCE}).
 Proposition~B, which is a supplementary comment on the theorem 
(explaining what the theorem says under some additional assumptions),
turns out to be more involved.

\begin{thmb}
 Let\/ $\sK$ be a Grothendieck category and\/ $\sS\subset\sK$ be
a class of objects.
 Let\/ $\sT\subset\sK$ be any class of objects of finite projective
dimension in\/~$\sK$ such that the union\/ $\sS\cup\sT$ contains
a set of generators for\/~$\sK$.
 Denote by\/ $\sC\subset\sK$ the closure of\/ $\sS\cup\sT$ under
coproducts, direct limits, extensions, and kernels of epimorphisms
in\/~$\sK$.
 Let\/ $\sB=\sS^{\perp_1}$ be the class of all objects
$B\in\sK$ such that\/ $\Ext^1_\sK(S,B)=0$ for all $S\in\sS$, and let\/
$\sD=\sC^{\perp_1}$ be the class of all objects $D\in\sK$ such that\/
$\Ext^1_\sK(C,D)=0$ for all $C\in\sC$.
 Then, for any acyclic complex of objects from\/ $\sD$ with the objects
of cocycles belonging to\/ $\sB$, the objects of cocycles actually
belong to\/~$\sD$.
\end{thmb}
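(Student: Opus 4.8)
The plan is to deduce Theorem~B from the periodicity principle of \v St\!'ov\'\i\v cek
and the present author \cite[Theorem~8.1]{PS6} --- the Grothendieck-category form of the
Bazzoni--Cort\'es-Izurdiaga--Estrada cotorsion periodicity argument
\cite[Theorem~4.7]{BCE} --- by verifying its hypotheses for the data $(\sC,\sD,\sS)$ at
hand. The structural properties of $\sC$ that need to be assembled are the following.
By construction $\sC$ is closed under coproducts, direct limits, extensions, and kernels
of epimorphisms, and hence also under direct summands (a class closed under coproducts
and direct limits is idempotent-complete, since the retract of an idempotent $e$ on an
object $X$ is the colimit of the sequence $X\rarrow X\rarrow\cdots$ with all maps equal
to~$e$). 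Since $\sS\cup\sT\subseteq\sC$ contains a set of generators, $\sC$ contains
a set of generators of $\sK$, so every $C\in\sC$ fits into a short exact sequence
$0\to\Omega C\to E\to C\to0$ with $E$ a coproduct of objects of $\sS\cup\sT$ (so
$E\in\sC$) and with $\Omega C\in\sC$; in particular $\sC$ is closed under syzygies,
which together with $\sD=\sC^{\perp_1}$ gives the heredity of the cotorsion pair
$(\sC,\sD)$. Finally, $\sT\subseteq\sC$ is itself a class of generators of \emph{finite
projective dimension} --- the decisive extra hypothesis of \cite[Theorem~8.1]{PS6}.

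Granting that theorem, it delivers exactly the conclusion of Theorem~B, and it is worth
recording why the hypothesis that the cocycles lie in $\sB=\sS^{\perp_1}$ is the natural
one. This hypothesis is formally weaker than the conclusion to be reached (as
$\sS\subseteq\sC$ forces $\sD\subseteq\sB$), yet, combined with the structure of the
acyclic complex, it suffices. In the prototype \cite{BCE} one proves
$\Ext^1_\sK(C,Z^n)=0$ for a cocycle $Z^n$ by resolving $C$ and comparing the resolution
against the given acyclic complex of objects of $\sD$; finiteness of the projective
dimension of $C$ is what makes this comparison effective, which is why the generators
$\sT$ are assumed of finite projective dimension, whereas the remaining generators
$S\in\sS$, carrying no finiteness, are handled instead by assuming $\Ext^1_\sK(S,Z^n)=0$
outright --- precisely the condition $Z^n\in\sB$. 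Passing from the vanishing on
$\sS\cup\sT$ to the vanishing on all of $\sC$ then uses that
$\{C\in\sK:\Ext^{\ge1}_\sK(C,Z^n)=0\}$ is closed under coproducts, extensions, and
kernels of epimorphisms (the heredity of $(\sC,\sD)$ enters here, to make the higher
$\Ext$ groups available); the single building operation of $\sC$ that this class does
not visibly respect is the passage to direct limits.

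Coping with that last point --- controlling a class cut out by closure under direct
limits, by exploiting the set-theoretic smallness of $\sS\cup\sT$ and the fact that
the cocycles of an acyclic complex of objects of $\sD$ inherit a direct-limit-stable
cotorsion-type property --- is the substance of \cite[Theorem~8.1]{PS6}, and is the step
I expect to be the genuine obstacle. Everything else, namely the structural
verifications of the first paragraph and the translation into the hypotheses of
\cite[Theorem~8.1]{PS6}, is routine; this is the sense in which Theorem~B is obtained
``rather easily'' once that theorem is in hand.
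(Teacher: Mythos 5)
Your outline of the periodicity core is essentially the paper's: one applies \cite[Theorem~8.1]{PS6} to see that $\sC\cap{}^{\perp_1}\{M\}$ is closed under direct limits, puts $\sT$ into this class via finite projective dimension and dimension shifting, puts $\sS$ into it via the hypothesis $M\in\sB$, and closes up under coproducts, extensions, and kernels of epimorphisms to conclude $\sC\cap{}^{\perp_1}\{M\}=\sC$. But there is a genuine gap: \cite[Theorem~8.1]{PS6} (like its prototype \cite[Theorem~4.7]{BCE}) is a statement about \emph{$\sD$\+periodic} objects $M$, i.e.\ objects sitting in a short exact sequence $0\rarrow M\rarrow L\rarrow M\rarrow0$ with $L\in\sD$; it does not apply to an individual cocycle $B^i$ of an acyclic complex $D^\bu$ of objects of $\sD$, since $B^i$ only sits in $0\rarrow B^i\rarrow D^i\rarrow B^{i+1}\rarrow0$ with distinct end terms. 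Your proposal asserts that the theorem ``delivers exactly the conclusion of Theorem~B'' and never bridges this distance. The standard bridge (Proposition~\ref{chop-or-splice-prop}, chopping the complex and taking the countable product of the pieces) is explicitly unavailable here: in a general Grothendieck category countable products need not be exact, and the classes $\sB$ and $\sD$ need not be closed under coproducts, so neither direction of that reduction goes through as stated.

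The paper's actual argument supplies this missing step. One forms the product sequence
$0\rarrow\prod_{i\in\boZ}B^i\rarrow\prod_{i\in\boZ}D^i\rarrow\prod_{i\in\boZ}B^i\rarrow0$
and proves its exactness by hand using \cite[Lemma~8.3]{PS6}: one covers $\prod_i B^i$ by a coproduct $G=\coprod_\xi G_\xi$ of generators from $\sS\cup\sT$ and checks $\Ext^1_\sK(G_\xi,B^i)=0$ for each~$\xi$ --- directly from $B^i\in\sB$ when $G_\xi\in\sS$, and by dimension shifting along the complex (using $\Ext^{\ge1}_\sK(G_\xi,D^j)=0$ and the finiteness of the projective dimension of $G_\xi$) when $G_\xi\in\sT$. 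Since $\sB$ and $\sD$ \emph{are} closed under products, $\prod_i B^i$ is then a genuinely $\sD$\+periodic object of $\sB$, the periodicity claim applies to it, and each $B^i$ is recovered as a direct summand of an object of $\sD$. Without this product construction and the exactness verification, your argument establishes only the periodic special case (the paper's Theorem~\ref{theorem-B-as-periodicity-claim}), not Theorem~B itself.
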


\begin{propb}
 In the context of Theorem~B, if the class\/ $\sS\cup\sT$ consists of
(some) finitely presentable objects and is closed under extensions and
kernels of epimorphisms in\/ $\sK$, then\/ $\sC$ coincides with\/
$\varinjlim(\sS\cup\sT)$, the class consisting of all direct limits of
diagrams of objects from\/ $\sS\cup\sT$, indexed by directed posets.
\end{propb}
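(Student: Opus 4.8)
The plan is to write $\sG=\sS\cup\sT$ and prove both inclusions between $\sC$ and $\varinjlim\sG$. The inclusion $\varinjlim\sG\subseteq\sC$ is immediate: $\sC$ contains $\sG$ and is closed under direct limits, hence under arbitrary coproducts (which are direct limits of their finite subcoproducts). For the reverse inclusion, recall that $\sC$ is, by its very definition, the smallest class of objects of $\sK$ that contains $\sG$ and is closed under coproducts, direct limits, extensions, and kernels of epimorphisms; it therefore suffices to verify that the class $\varinjlim\sG$ already enjoys all four of these closure properties. Two preliminary observations will be used throughout. Since by hypothesis $\sG$ contains a set of generators of $\sK$ and consists of finitely presentable objects, the category $\sK$ is locally finitely presentable, so in particular direct limits in it are exact. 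Moreover, every object of $\sG$ is of type $\FP_\infty$, with all of its syzygies again in $\sG$: being closed under extensions, $\sG$ is closed under finite coproducts, so any $G\in\sG$ is a quotient of a finite coproduct $P\in\sG$ of the generators, the kernel of $P\twoheadrightarrow G$ lies in $\sG$ by closure under kernels of epimorphisms and is thus finitely presentable, and one iterates; in particular each object of $\sG$ is of type $\FP_2$. Finally, we may assume $\sG$ closed under direct summands, as passing to the summand closure alters neither $\varinjlim\sG$ nor the hypotheses on $\sG$.

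Closure of $\varinjlim\sG$ under coproducts and direct limits is the standard fact that the direct limit closure of a class of finitely presentable objects is itself closed under direct limits. Closure of $\varinjlim\sG$ under extensions is exactly Proposition~A(i), which applies because $\sG$ consists of finitely presentable objects of type $\FP_2$, is closed under extensions, and lies in the locally finitely presentable abelian category $\sK$. Here, and below, one also uses the standard facts that $\varinjlim\sG$ is closed under direct summands and under pure subobjects, together with the following construction of syzygies inside $\varinjlim\sG$: given $M\in\varinjlim\sG$, write $M=\varinjlim_i A_i$ with $A_i\in\sG$ and form the canonical pure presentation $0\to R\to\coprod_i A_i\to M\to0$, so that $R$, being a pure subobject of $\coprod_i A_i\in\varinjlim\sG$, lies in $\varinjlim\sG$; surjecting onto $\coprod_i A_i$ from a coproduct $\coprod_i Q_i$ of generators, with $Q_i\twoheadrightarrow A_i$ finite coproducts so that the kernels are syzygies of the $A_i$ lying in $\sG$, the composite $\coprod_i Q_i\twoheadrightarrow M$ has kernel which is an extension of $R$ by a coproduct of such syzygies, hence an object of $\varinjlim\sG$.

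The remaining assertion — that $\varinjlim\sG$ is closed under kernels of epimorphisms — is the genuinely hard one. Let $0\to K\to M\to N\to0$ be exact with $M,N\in\varinjlim\sG$ and let $p\colon M\to N$ denote the given epimorphism. Writing $N=\varinjlim_j B_j$ with $B_j\in\sG$ and taking the canonical pure presentation $0\to R\to\coprod_j B_j\to N\to0$, with $R\in\varinjlim\sG$ as above, one pulls $p$ back along $\coprod_j B_j\to N$ to obtain an object $M'$ sitting in exact sequences $0\to R\to M'\to M\to0$ — so $M'\in\varinjlim\sG$ by closure under extensions — and $0\to K\to M'\to\coprod_j B_j\to0$. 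This reduces the problem to the case where $N$ is a coproduct of objects of $\sG$, and then, after one more pullback along a coproduct-of-generators surjection onto $\coprod_j B_j$ whose kernel is a coproduct of syzygies in $\sG$, to the case where $N$ is a coproduct of generators. A diagram chase together with the syzygy construction above must then be used to recover $K$ itself as an object of $\varinjlim\sG$.

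This last descent is the main obstacle. The naive strategy — writing $M=\varinjlim_i G_i$ with $G_i\in\sG$, so that $K=\varinjlim_i\ker(G_i\to N)$ by exactness of direct limits, and handling the terms separately — does not work, because a kernel of a morphism between two objects of $\sG$ need not lie in $\varinjlim\sG$; this already fails in the flat/projective situation. What is really required is a homological description of the class $\varinjlim\sG$: for instance, one may present $\sK$, via the Gabriel--Popescu theorem, as an exact localization of a module category over a small additive category, under which $\varinjlim\sG$ corresponds to a class of ``$\sG$-flat'' modules cut out by the vanishing of suitable $\Tor$-functors — the $\FP_\infty$-hypothesis being precisely what makes such a description available — and then invoke the associated $\Tor$ long exact sequence, just as in the proof that flat modules are closed under kernels of epimorphisms; alternatively, one may appeal to the fact that $\varinjlim\sG$ is the left-hand class of a hereditary complete cotorsion pair. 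Once $\varinjlim\sG$ is known to be closed under all four operations, the equality $\sC=\varinjlim\sG$ follows, and with it the remaining descriptions of $\sC$ asserted in Theorem~B.
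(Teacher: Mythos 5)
Your reduction of Proposition~B to the statement that $\varinjlim(\sS\cup\sT)$ is closed under coproducts, direct limits, extensions, and kernels of epimorphisms is exactly the route the paper takes (its proof is one sentence invoking Proposition~\ref{varinjlim-kernel-closed}), and your treatment of the first three closure properties \<- direct limit closure of a class of finitely presentables, Proposition~A(i) for extensions, and the syzygy bookkeeping showing every object of $\sS\cup\sT$ is $\FP_\infty$ with syzygies again in the class \<- is sound. But the fourth closure property, under kernels of epimorphisms, is the entire content of the proposition, and you have not proved it. Your two pullback reductions only improve the codomain of the epimorphism (first to a coproduct of objects of $\sS\cup\sT$, then to a coproduct of generators); since the generators need not be projective, the resulting sequence $0\to K\to M''\to\coprod_j Q_j\to0$ is no more tractable than the one you started with, and ``a diagram chase together with the syzygy construction must then be used'' is a placeholder, not an argument. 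Your fallback suggestions also do not close the gap in the stated generality: the functor-category equivalence $\varinjlim(\sS\cup\sT)\simeq\Modrfl\cS$ of Proposition~\ref{pure-exact-structure-prop} matches only the \emph{pure} exact structures, so a $\Tor$ long exact sequence on the module side says nothing about kernels of non-pure epimorphisms in $\sK$; and the assertion that $\varinjlim(\sS\cup\sT)$ is the left class of a \emph{hereditary} cotorsion pair is equivalent to the very closure property you are trying to prove (the module-theoretic proof of hereditariness via pure-injective cosyzygies in~\cite{AT} is not available over a general Grothendieck category, which is precisely why the paper proves Proposition~\ref{varinjlim-kernel-closed} directly).

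The missing idea, as implemented in the paper's proof of Proposition~\ref{varinjlim-kernel-closed}, is to modify the \emph{epimorphism} rather than its codomain. Write $\sG=\sS\cup\sT$. Given an epimorphism $C\to D$ with $C,D\in\varinjlim\sG$ and $D=\varinjlim_i S_i$, one pulls back over each $S_i\to D$ and lets $\Xi_i$ be the set of those morphisms $H\to S_i$ (from a fixed coproduct of generators $H$) that lift to the pullback $C_i$; this produces an epimorphism $H^{(\Xi_i)}\to S_i$ whose kernel is an extension of a coproduct of generators by an object of $\sG$ (split off a finite subcoproduct using finite generation of $S_i$ and apply closure of $\sG$ under kernels of epimorphisms), hence lies in $\varinjlim\sG$. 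Passing to the direct limit and precomposing with $\coprod_i H^{(\Xi_i)}\to\varinjlim_i H^{(\Xi_i)}$, whose kernel is controlled by Corollary~\ref{kernel-of-coproduct-onto-direct-limit-cor}, yields an epimorphism $H'\to D$ from a coproduct of generators with kernel in $\varinjlim\sG$ which, by the very choice of the sets $\Xi_i$, \emph{factors through} $C\to D$. The pullback of $C\to D$ along $H'\to D$ is then an extension of $C$ by $\ker(H'\to D)$, hence belongs to $\varinjlim\sG$, and it splits as $\ker(C\to D)\oplus H'$ because of the factorization; so $\ker(C\to D)$ is a direct summand of an object of $\varinjlim\sG$ and therefore lies in $\varinjlim\sG$. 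This factorization-plus-splitting device is what your proposal lacks.
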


 One can see that under the assumptions of Proposition~B the class
$\sS\cup\sT$ has to consist of strongly finitely presentable
($\FP_\infty$) objects.
 Taking $\sT=\varnothing$ makes Theorem~0(b) a~particular case of
Theorem~B with Proposition~B (for $\sK=\Modr R$).

 Taking $\sS$ to be the class of all finitely presentable objects in
a locally finitely presentable abelian category $\sK$ and
$\sT=\varnothing$, one obtains the assertion~(4) or~(4$^\text{c}$)
on the list of Section~\ref{introd-seven-items}, essentially due
to \v St\!'ov\'\i\v cek~\cite[Corollary~5.5]{Sto}, as a particular case
of Theorem~B (for $\sK=\Modr R$).

 For another particular case of Theorem~B with Proposition~B arising
in the context of locally coherent exact categories,
see~\cite[Theorem~7.6]{Plce}.
 
\subsection{{}}
 The proofs of Theorems~0(b) and~B are presented in
Section~\ref{generalized-cotorsion-periodicity-secn}.
 Theorem~0(a) is proved in 
Section~\ref{generalized-fp-projective-periodicity-I-secn}.
 The proofs of Proposition~A(i) and Proposition~B are given in
Section~\ref{direct-limit-closures-of-FP-classes}.
 Proposition~A(ii) is proved in Section~\ref{pure-exact-structure-secn}.
 The possibilities and difficulties of extending Proposition~A
to higher cardinals~$\kappa$ are discussed in
Section~\ref{direct-limits-of-kappa-P-classes}.
 The proof of the main result, Theorem~A, is presented in
Section~\ref{generalized-fp-projective-periodicity-II-secn}.

 One comment on the style of the exposition may be in order.
 This paper is written with the intent to be at least partially
understandable to readers not necessarily feeling at ease with
advanced category-theoretic concepts.
 In order not to intimidate a reader mostly interested in
module-theoretic rather than category-theoretic applications,
category-theoretic terminology is introduced slowly and gradually
as the paper progresses from the less general results such as
Theorem~0 to the more general ones such as Theorem~B, Proposition~B,
Proposition~A, and Theorem~A.
 This order of exposition also allows us to make a better connection
with the preceding work in module theory, such as~\cite{AT}
and~\cite{GT}.

 Finally, let us make one terminological remark.
 In this paper, we generally refer to skeletally small classes as
``classes'' rather than ``sets''.
 So, the collection of all finitely presented modules over a given ring
or all $\kappa$\+presentable objects in a given Grothendieck category is
a class in our terminology.

\subsection*{Acknowledgement}
 I~want to thank Michal Hrbek, Silvana Bazzoni, Jan \v Saroch, and
 Jan Trlifaj for helpful discussions and comments.
 Long conversations with Jan \v St\!'ov\'\i\v cek were particularly
illuminating, and to him goes my special gratitude.
 I~also wish to thank an anonymous referee for helpful suggestions.
 The author was supported by the GA\v CR project 20-13778S and
the Institute of Mathematics, Czech Academy of Sciences
(research plan RVO:~67985840).

\Section{Generalized Fp-injective/Injective and Cotorsion Periodicity}
\label{generalized-cotorsion-periodicity-secn}

 In this section we prove Theorems~0(b) and~B.
 This is not difficult, given the preceding results
in~\cite[Theorem~4.7]{BCE} and~\cite[Theorem~9.1]{PS6}.
 The former theorem needs to be used together
with~\cite[Corollary~2.4]{AT}, and the latter one together
with~\cite[Lemmas~7.4 and~9.3]{PS6}.

 Let us formally introduce some notation and terminology which was
already used throughout the introduction.
 Given an abelian (or exact~\cite{Bueh}) category $\sK$ and a class of
objects $\sA\subset\sK$, one denotes by $\sA^{\perp_1}\subset\sK$
the class of all objects $X\in\sK$ such that $\Ext^1_\sK(A,X)=0$
for all $A\in\sA$.
 Dually, for a class of objects $\sB\subset\sK$, the notation
${}^{\perp_1}\sB\subset\sK$ stands for the class of all objects
$Y\in\sK$ such that $\Ext^1_\sK(Y,B)=0$ for all $B\in\sB$.
 Similarly, $\sA^{\perp_{\ge1}}\subset\sK$ is the class of all objects
$X\in\sK$ such that $\Ext^n_\sK(A,X)=0$ for all $A\in\sA$ and $n\ge1$.
 Dually, ${}^{\perp_{\ge1}}\sB\subset\sK$ is the class of all objects
$Y\in\sK$ such that $\Ext^n_\sK(Y,B)=0$ for all $B\in\sB$ and $n\ge1$.

 A class of objects $\sA\subset\sK$ is said to be \emph{generating}
(or \emph{a class of generators}) if every object of $\sK$ is
a quotient object of a coproduct of objects from~$\sA$.
 A class of objects $\sB\subset\sK$ is said to be \emph{cogenerating}
(or \emph{a class of cogenerators}) if every object of $\sK$ is
a subobject of a product of objects from~$\sB$.

 The previous definitions, as well as generally all category-theoretic
definitions in this paper, are transferred from abelian to exact
categories in the obvious way: all the mentions of ``subobjects'',
``quotients'', ``monomorphisms'', ``epimorphisms'', ``exact sequences'', 
etc., are understood to mean admissible monomorphisms, admissible
epimorphisms, admissible exact sequences, etc.

 A pair of classes of objects $(\sA,\sB)$ in $\sK$ is said to be
a \emph{cotorsion pair} if $\sA={}^{\perp_1}\sB$ and
$\sB=\sA^{\perp_1}$.
 Notice that, for any cotorsion pair $(\sA,\sB)$ in $\sK$, the class
$\sA\subset\sK$ is closed under coproducts (i.~e., those coproducts
that exist in~$\sK$), and the class $\sB\subset\sK$ is closed under
products (in the same sense)~\cite[Corollary 8.3]{CoFu},
\cite[Corollary A.2]{CoSt}.

 For any class of objects $\sS\subset\sK$, the pair of classes
$\sB=\sS^{\perp_1}$ and $\sA={}^{\perp_1}\sB$ is a cotorsion pair
in~$\sK$.
 The cotorsion pair $(\sA,\sB)$ obtained in this way is said to be
\emph{generated} by the class~$\sS$.
 Dually, for any class of objects $\sT\subset\sK$, the pair of classes
$\sA={}^{\perp_1}\sT$ and $\sB=\sA^{\perp_1}$ is also a cotorsion pair
in~$\sK$.
 The latter cotorsion pair $(\sA,\sB)$ is said to be \emph{cogenerated}
by the class~$\sT$.

 Let $(\sA,\sB)$ be a cotorsion pair in $\sK$ such that the class $\sA$
is generating and the class $\sB$ is cogenerating in\/~$\sK$.
 So every object of $\sK$ is a quotient object of an object from $\sA$
and a subobject of an object from~$\sB$.
 These conditions are satisfied automatically for any cotorsion pair
in an abelian category $\sK$ with enough projective and injective
objects (because all projective objects belong to $\sA$ and all
injective objects belong to~$\sB$).
 In particular, this applies to the module categories $\sK=\Modr R$.

 In the assumptions of the previous paragraph, the following conditions
are equivalent~\cite[Theorem~1.2.10]{GR}, \cite[Lemma~5.24]{GT},
\cite[Section~1]{BHP}, \cite[Lemma~7.1]{PS6}:
\begin{enumerate}
\item the class\/ $\sA$ is closed under kernels of epimorphisms
in\/~$\sK$;
\item the class\/ $\sB$ is closed under cokernels of monomorphisms
in\/~$\sK$;
\item $\Ext_\sK^2(A,B)=0$ for all $A\in\sA$ and $B\in\sB$;
\item $\Ext_\sK^n(A,B)=0$ for all $A\in\sA$, \,$B\in\sB$, and $n\ge1$.
\end{enumerate}
 A cotorsion pair $(\sA,\sB)$ satisfying conditions~(1\+-4) is said
to be \emph{hereditary}.

 Given a class of objects $\sL\subset\sK$, an object $M\in\sK$ is
said to be \emph{$\sL$\+periodic} if there exists a short exact
sequence $0\rarrow M\rarrow L\rarrow M\rarrow0$
\,\eqref{periodicity-sequence} in $\sK$ with $L\in\sL$.
 We recall that the notation $\varinjlim\sL\subset\sK$ stands for
the class of all direct limits in $\sK$ of diagrams of objects
from $\sL$ (indexed by directed posets).

 The following proposition is a generalization of~\cite[proof of
Proposition~7.6]{CH} and~\cite[Propositions~1 and~2]{EFI}.

\begin{prop} \label{chop-or-splice-prop}
 Let\/ $\sK$ be an abelian category and\/ $\sL\subset\sL'\subset\sM$
be three classes of objects in\/~$\sK$.
 Consider the following two properties:
\begin{enumerate}
\item In every acyclic complex $L^\bu$ in\/ $\sK$ with the terms
$L^n\in\sL$, $\,n\in\boZ$, and with the objects of cocycles of $L^\bu$
belonging to\/ $\sM$, the objects of cocycles belong to\/~$\sL'$.
\item In every short exact sequence $0\rarrow M\rarrow L\rarrow
M\rarrow0$ in\/ $\sK$ with the objects $L\in\sL$ and $M\in\sM$, one
has $M\in\sL'$.
\end{enumerate}
 In this setting, the implication (1)\/\,$\Longrightarrow$\,(2) holds
true.
 If countable coproducts exist and are exact in\/ $\sK$, the classes\/
$\sL$ and\/ $\sM$ are closed under countable coproducts in\/ $\sK$, and
the class\/ $\sL'$ is closed under direct summands in\/ $\sK$,
then the implication (2)\/\,$\Longrightarrow$\,(1) also holds.
 Dually, if countable products exist and are exact in\/ $\sK$,
the classes\/ $\sL$ and\/ $\sM$ are closed under countable products
in\/ $\sK$, and the class\/ $\sL'$ is closed under direct summands
in\/ $\sK$, then the implication (2)\/\,$\Longrightarrow$\,(1) holds. 
\end{prop}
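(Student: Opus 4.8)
The plan is to prove the two implications by the standard ``chop-or-splice'' manipulations of acyclic complexes, now carried out with three classes in place of one.

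\smallskip

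\emph{The implication (1)\,$\Longrightarrow$\,(2).}
Given a short exact sequence $0\rarrow M\rarrow L\rarrow M\rarrow0$ of the form~\eqref{periodicity-sequence} with $L\in\sL$ and $M\in\sM$, write $i\:M\rarrow L$ for the monomorphism and $p\:L\rarrow M$ for the epimorphism. I would form the complex $L^\bu$ with $L^n=L$ for all $n\in\boZ$ and differential $d^n=i\circ p$. Since $p\circ i=0$, we get $d^{n+1}\circ d^n=i\circ(p\circ i)\circ p=0$, so $L^\bu$ is indeed a complex; and since $i$ is monic and $p$ is epic, $\ker d^n=\ker p=\operatorname{im}i=\operatorname{im}d^{n-1}$, so $L^\bu$ is acyclic with all of its objects of cocycles isomorphic to~$M$. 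As $L^n\in\sL$ and the cocycles lie in $\sM$, property~(1) gives $M\in\sL'$. No closure hypotheses are used here, which matches the unconditional form of this implication in the proposition.

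\smallskip

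\emph{The implication (2)\,$\Longrightarrow$\,(1), coproduct case.}
Let $L^\bu$ be an acyclic complex with $L^n\in\sL$ for all $n$ and with objects of cocycles $Z^n=\ker(L^n\rarrow L^{n+1})$ lying in~$\sM$. Acyclicity provides short exact sequences $0\rarrow Z^n\rarrow L^n\rarrow Z^{n+1}\rarrow0$ for each $n\in\boZ$. The plan is to take the coproduct of this $\boZ$\+indexed family of short exact sequences: since countable coproducts are exact in $\sK$, this yields a short exact sequence $0\rarrow\coprod_{n}Z^n\rarrow\coprod_{n}L^n\rarrow\coprod_{n}Z^{n+1}\rarrow0$. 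Because the families $(Z^{n+1})_{n\in\boZ}$ and $(Z^n)_{n\in\boZ}$ coincide, we have $\coprod_nZ^{n+1}\cong\coprod_nZ^n$, and after this reindexing the sequence becomes one of the form~\eqref{periodicity-sequence}, namely $0\rarrow W\rarrow V\rarrow W\rarrow0$ with $W=\coprod_nZ^n$ and $V=\coprod_nL^n$. Here $V\in\sL$ because $\sL$ is closed under coproducts, and $W\in\sM$ because $\sM$ is closed under coproducts; hence property~(2) gives $W\in\sL'$. Finally, each $Z^m$ is a direct summand of $W$, so $Z^m\in\sL'$ by the assumed closure of $\sL'$ under direct summands; as $m$ was arbitrary, all objects of cocycles of $L^\bu$ belong to~$\sL'$.

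\smallskip

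\emph{The dual (product) case, and remarks on the difficulty.}
The implication (2)\,$\Longrightarrow$\,(1) under the alternative hypotheses is proved by the formally dual argument: replace the coproduct $\coprod_n$ of the short exact sequences $0\rarrow Z^n\rarrow L^n\rarrow Z^{n+1}\rarrow0$ by their product $\prod_n$ (using exactness of countable products and closure of $\sL$ and $\sM$ under products), use $\prod_nZ^{n+1}\cong\prod_nZ^n$ to obtain a sequence of the form~\eqref{periodicity-sequence}, and extract each $Z^m$ as a direct summand of $\prod_nZ^n$. I do not anticipate any serious obstacle: the argument is entirely elementary. The only points that call for a little attention are the verification that the spliced complex in the first part is genuinely acyclic, and the reindexing $\coprod_nZ^{n+1}\cong\coprod_nZ^n$ (resp.\ $\prod_nZ^{n+1}\cong\prod_nZ^n$) together with the observation that it really produces a periodicity sequence to which property~(2) applies; once that is in place, each of the three closure assumptions enters exactly once, in the evident spot.
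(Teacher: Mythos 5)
Your proposal is correct and follows exactly the argument the paper indicates (and defers to \cite{CH} and \cite{EFI} for): splice the periodicity sequence into a doubly unbounded acyclic complex for (1)\,$\Rightarrow$\,(2), and chop the complex into short exact sequences, take their exact (co)product, reindex to obtain a periodicity sequence, and extract each object of cocycles as a direct summand for (2)\,$\Rightarrow$\,(1). All the closure hypotheses are used in the right places, so nothing further is needed.
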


\begin{proof}
 The implication (1)\,$\Longrightarrow$\,(2) is provable by
splicing up a doubly unbounded sequence of short exact sequences
$0\rarrow M\rarrow L\rarrow M\rarrow0$ and applying~(1) to
the resulting doubly unbounded complex.
 To prove the implication (2)\/\,$\Longrightarrow$\,(1), one needs
to chop up the complex $L^\bu$ into short exact sequence pieces
and apply~(2) to the infinite (co)product of the pieces.
 We refer to the proofs in~\cite[Proposition~7.6]{CH}
or~\cite[Propositions~1 and~2]{EFI} for the details.
\end{proof}

 A short exact sequence of right $R$\+modules $0\rarrow K\rarrow L
\rarrow M\rarrow0$ is said to be \emph{pure} if it remains exact
after taking the tensor product with any left $R$\+module.
 Equivalently, a short exact sequence $0\rarrow K\rarrow L\rarrow M
\rarrow0$ is pure if and only if it remains exact after applying
the functor $\Hom_R(S,{-})$ from any finitely presented right
$R$\+module~$S$ \,\cite[Definition~2.6 and Lemma~2.19]{GT}.
 If this is the case, the object $K$ is said to be a \emph{pure
subobject} of $L$, while the object $M$ is called a \emph{pure
epimorphic image} (or a \emph{pure quotient}) of~$M$.
 The \emph{pure exact structure} on $\Modr R$ is formed by the class
of all pure exact sequences.
 The projective objects of the exact category $\Modr R$ with the pure
exact structure are called \emph{pure-projective} $R$\+modules, and
the injective objects are called \emph{pure-injective}.

 An $R$\+module $S$ is said to be $\FP_n$ (where $n\ge0$ is
an integer) if it admits a fragment of projective resolution
$P_n\rarrow P_{n-1}\rarrow\dotsb\rarrow P_0\rarrow S\rarrow0$
with finitely generated projective modules~$P_i$.
 So a module is $\FP_0$ if and only if it is finitely generated, and
it is $\FP_1$ if and only if it is finitely presented.
 A module $S$ is said to be $\FP_\infty$ if it admits a resolution by
finitely generated projective modules; equivalently, this means that
$S$ is $\FP_n$ for all $n\ge0$.
 Modules of type $\FP_\infty$ are otherwise known as \emph{strongly
finitely presented}.

 A class of modules $\sS$ is said to be \emph{closed under syzygies}
if for every module $S\in\sS$ there exists a short exact sequence
$0\rarrow K\rarrow P\rarrow S\rarrow0$ with a projective module $P$
and $K\in\sS$.
 For any other short exact sequence $0\rarrow K'\rarrow P'\rarrow S
\rarrow0$ with a projective module $P'$, it then follows that
$K'\oplus P\simeq K\oplus P'$, which often implies that $K'\in\sS$
as well.
 Dually, a class of modules $\sT$ is \emph{closed under cosyzygies}
if for every module $T\in\sT$ there exists a short exact sequence
$0\rarrow T\rarrow J\rarrow L\rarrow0$ with an injective module $J$
and $L\in\sT$.
 For any other short exact sequence $0\rarrow T\rarrow J'\rarrow L'
\rarrow0$ with an injective module $J'$, it then follows that
$L'\oplus J\simeq L\oplus J'$, which often implies that $L'\in\sT$, too.

\begin{proof}[Proof of Theorem~0(b) from
Section~\ref{introd-theorem-zero}]
 Let us first prove the first assertion, then deduce the second one.
 In order to apply~\cite[Theorem~4.7]{BCE}, we need to show that
$(\sC,\sD)$ is a hereditary cotorsion pair in $\Modr R$.
 First of all, $(\sC,\sD)$ is indeed a cotorsion pair
by~\cite[Corollary~2.4]{AT} (see also~\cite[Corollary~8.42]{GT}).
 Alternatively,
Corollary~\ref{angeleri-trlifaj-cotorsion-pair-in-lfp-category} below
provides a more general result.

 To show that the cotorsion pair $(\sC,\sD)$ is hereditary, one can
argue as follows.
 The cotorsion pair $(\sA,\sB)$ generated by $\sS$ in $\Modr R$ is
hereditary, since the class $\sS$ is closed under
syzygies~\cite[Theorem~2.6]{CEG}, \cite[Corollary~5.25(a)]{GT},
\cite[Lemma~1.3]{BHP}, \cite[Lemma~7.1]{PS6}.
 Consequently, the class $\sB$ is closed under the cokernels of
monomorphisms, and in particular, under cosyzygies in $\Modr R$.
 By~\cite[Corollary~2.4]{AT} or~\cite[Corollary~8.42]{GT},
the cotorsion pair $(\sC,\sD)$ is cogenerated by the class of all
pure-injective modules belonging to~$\sB$.
 The class of all pure-injective modules is closed under cosyzygies
by~\cite[Lemma~6.20]{GT}, so the class of all pure-injective
modules belonging to $\sB$ is also closed under cosyzygies.
 Applying~\cite[Corollary~5.25(b)]{GT}, we conclude that the cotorsion
pair $(\sC,\sD)$ is hereditary.
 Alternatively, one can use Proposition~\ref{varinjlim-kernel-closed}
below, which is a more general result.

 We also need to know that the class $\sC$ is closed under pure
epimorphic images.
 This is~\cite[Proposition~2.1]{Len}, \cite[Section~4.1]{CB},
\cite[Theorem~2.3]{AT}, or~\cite[Theorem~8.40]{GT}.
 By the latter two references, we also have $\sA\subset\sC$, hence
$\sB\supset\sD$.

 Therefore, the result of~\cite[Theorem~4.7]{BCE} is applicable to
the cotorsion pair $(\sC,\sD)$, and it tells that the class
$\sC\cap{}^{\perp_1}\{M\}$ is closed under direct limits in $\Modr R$
for any $\sD$\+periodic module~$M$.
 Now, if $M\in\sB$, then the class $\sC\cap{}^{\perp_1}\{M\}$
contains~$\sA$.
 Thus $\sC=\varinjlim\sA\subset{}^{\perp_1}\{M\}$ and $M\in\sD$.

 To deduce the second assertion of Theorem~0(b) from the first one,
we apply Proposition~\ref{chop-or-splice-prop}\,(2)\,$\Rightarrow$\,(1)
to the category $\sK=\Modr R$ and the classes of objects
$\sL=\sL'=\sD$, \ $\sM=\sB$.
 Here we need to use the observations that countable products are
exact in $\Modr R$, the classes $\sD$ and $\sB$ are closed under
countable products, and the class $\sD$ is closed under direct
summands in $\Modr R$.
\end{proof}

 Let $\sK$ be an abelian category.
 A class of objects $\sC\subset\sK$ is said to be
\emph{self-generating}~\cite[Section~1]{BHP}, \cite[Section~7]{PS6}
if for any epimorphism $K\rarrow C$ in $\sK$ with $C\in\sC$ there
exists a morphism $C'\rarrow K$ in $\sK$ with $C'\in\sC$ such that
the composition $C'\rarrow K\rarrow C$ is an epimorphism in~$\sK$.
 A class of objects $\sC$ is said to be
\emph{self-resolving}~\cite[Section~9]{PS6} if it is self-generating
and closed under extensions and kernels of epimorphisms.

 Before proving Theorem~B as it is stated, let us explicitly
formulate and prove the following periodicity assertion.

\begin{thm} \label{theorem-B-as-periodicity-claim}
 Let\/ $\sK$ be a Grothendieck category and\/ $\sS\subset\sK$ be
a class of objects.
 Let\/ $\sT\subset\sK$ be any class of objects of finite projective
dimension in\/~$\sK$ such that the union\/ $\sS\cup\sT$ contains
a set of generators for\/~$\sK$.
 Denote by\/ $\sC\subset\sK$ the closure of\/ $\sS\cup\sT$ under
coproducts, direct limits, extensions, and kernels of epimorphisms
in\/~$\sK$.

 Put\/ $\sB=\sS^{\perp_1}$ and\/ $\sD=\sC^{\perp_1}\subset\sK$.
 Then, for any short exact sequence~\eqref{periodicity-sequence}
as in Section~\ref{introd-seven-items} with objects $L\in\sD$ and
$M\in\sB$, one has $M\in\sD$.
 In other words, any\/ $\sD$\+periodic object belonging to\/ $\sB$
actually belongs to\/~$\sD$.
\end{thm}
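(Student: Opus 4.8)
The plan is to imitate the proof of Theorem~0(b) above, the only new point being that the class $\sT$ of objects of finite projective dimension must be carried along in parallel with~$\sS$. First observe that $\sC$ is a \emph{self-resolving} class of objects containing a set of generators for~$\sK$. Indeed, $\sC\supset\sS\cup\sT$ contains a set of generators, and $\sC$ is closed under coproducts, so $\sC$ is a generating class; being a generating class closed under kernels of epimorphisms, it is self-generating (given an epimorphism $K\rarrow C$ with $C\in\sC$, cover $K$ by a coproduct of generators, which again lies in~$\sC$); and it is closed under extensions and kernels of epimorphisms by construction. Since $\sC$ is moreover closed under direct limits, \cite[Lemmas~6.4 and~8.3]{PS6} imply that $(\sC,\sD)$ is a hereditary complete cotorsion pair in~$\sK$. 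As $\sS\subset\sC$, we also have $\sD=\sC^{\perp_1}\subset\sS^{\perp_1}=\sB$, so the hypothesis $L\in\sD$ is in fact stronger than the conclusion $M\in\sD$ being asserted.

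Now fix a short exact sequence~\eqref{periodicity-sequence} with $L\in\sD$ and $M\in\sB$. Applying \cite[Theorem~8.1]{PS6} (a generalization of \cite[Theorem~4.7]{BCE}) to the hereditary complete cotorsion pair $(\sC,\sD)$ and the $\sD$\+periodic object~$M$, we obtain that the class $\sC\cap{}^{\perp_1}\{M\}$ is closed under direct limits in~$\sK$. The computational core of the argument is the following remark: for any object $C\in\sC$, applying $\Hom_\sK(C,{-})$ to~\eqref{periodicity-sequence} and using that $\Ext^n_\sK(C,L)=0$ for all $n\ge1$ (heredity of the cotorsion pair, since $C\in\sC$ and $L\in\sD$), the long exact sequence breaks into isomorphisms $\Ext^n_\sK(C,M)\simeq\Ext^{n+1}_\sK(C,M)$ for all $n\ge1$. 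Two consequences: (a)~for $C\in\sC$, one has $\Ext^1_\sK(C,M)=0$ if and only if $\Ext^n_\sK(C,M)=0$ for all $n\ge1$; and (b)~if $C\in\sC$ has finite projective dimension~$d$, then $\Ext^1_\sK(C,M)\simeq\cdots\simeq\Ext^{d+1}_\sK(C,M)=0$ with no further hypothesis.

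It now suffices to check that $\sC\cap{}^{\perp_1}\{M\}$ contains $\sS\cup\sT$ and is closed under the four operations defining~$\sC$; since $\sC$ is, by definition, the smallest such class, this forces $\sC\subset{}^{\perp_1}\{M\}$, that is, $\Ext^1_\sK(C,M)=0$ for all $C\in\sC$, which is precisely $M\in\sC^{\perp_1}=\sD$. Containment of $\sS$ holds because $M\in\sB=\sS^{\perp_1}$, and containment of $\sT$ is consequence~(b). The class ${}^{\perp_1}\{M\}$ is closed under coproducts and extensions, as is~$\sC$; closure of $\sC\cap{}^{\perp_1}\{M\}$ under direct limits was just recorded. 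For kernels of epimorphisms, consider a short exact sequence $0\rarrow K\rarrow C\rarrow C''\rarrow0$ with $C,C''\in\sC\cap{}^{\perp_1}\{M\}$; then $K\in\sC$, and in the long exact sequence $\Ext^1_\sK(K,M)$ is squeezed between $\Ext^1_\sK(C,M)=0$ and $\Ext^2_\sK(C'',M)$, which vanishes by consequence~(a) applied to~$C''$. Hence $K\in\sC\cap{}^{\perp_1}\{M\}$, as needed.

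The real work sits entirely in the two imported facts — that $(\sC,\sD)$ is a hereditary complete cotorsion pair (\cite[Lemmas~6.4 and~8.3]{PS6}), and, above all, that $\sC\cap{}^{\perp_1}\{M\}$ is closed under direct limits for a $\sD$\+periodic~$M$ (\cite[Theorem~8.1]{PS6}, extending \cite[Theorem~4.7]{BCE}). Granted these, everything else is routine bookkeeping: upgrade the vanishing of $\Ext^1_\sK(C,M)$ to that of $\Ext^n_\sK(C,M)$ for all $n\ge1$ over~$\sC$, and run through the four closure conditions, with the finite-projective-dimension objects of~$\sT$ handled by consequence~(b).
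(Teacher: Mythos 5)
Your proof is correct and follows essentially the same route as the paper's: the key external input in both is \cite[Theorem~8.1]{PS6}, giving closure of $\sC\cap{}^{\perp_1}\{M\}$ under direct limits for a $\sD$\+periodic $M$, after which one checks that this class contains $\sS\cup\sT$ and is closed under the four operations defining $\sC$ --- you merely inline the dimension-shifting arguments that the paper delegates to \cite[Lemmas~6.3 and~6.4]{PS6}. (Your passing assertion that $(\sC,\sD)$ is a \emph{complete} cotorsion pair is neither needed nor supplied by the lemmas you cite; what you actually use is only the heredity statement $\Ext^n_\sK(C,L)=0$ for all $C\in\sC$, $L\in\sD$, $n\ge1$, which does follow from $\sC$ being generating and closed under kernels of epimorphisms.)
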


\begin{proof}
 The class of objects $\sC$ contains a set of generators for $\sK$ and
is closed under coproducts; hence, in particular, it is self-generating.
 The class $\sC$ is also closed under extensions and kernels of
epimorphisms in~$\sK$; so it is self-resolving.
 Finally, the class $\sC$ is closed under direct limits in $\sK$, and
the direct limits are exact in~$\sK$.
 Thus the assumptions of~\cite[Theorem~9.1]{PS6} are satisfied for
the class $\sC\subset\sK$, which tells that, for any $\sD$\+periodic
object $M\in\sK$, the class $\sC\cap{}^{\perp_1}\{M\}$ is closed
under direct limits in~$\sC$, or equivalently, in~$\sK$.

 By~\cite[Lemma~1.3]{BHP} or~\cite[Lemma~7.1]{PS6}, we have
$\Ext^n_\sK(C,D)=0$ for all objects $C\in\sC$, \ $D\in\sD$, and
integers $n\ge1$.
 By~\cite[Lemma~7.4]{PS6}, it follows that the class
$\sC\cap{}^{\perp_1}\{M\}$ contains all objects of the class $\sC$
which have finite projective dimension in~$\sK$.
 Thus $\sT\subset\sC\cap{}^{\perp_1}\{M\}$.
 If $M\in\sB$, then we also have $\sS\subset\sC\cap{}^{\perp_1}\{M\}$.

 On the other hand, \cite[Lemma~7.3 or Theorem~9.1]{PS6} also tells
that the class $\sC\cap{}^{\perp_1}\{M\}$ is closed under extensions
and kernels of admissible epimorphisms in the exact category $\sC$
(with the exact category structure inherited from the abelian exact
structure of~$\sK$).
 Since the class $\sC$ is closed under extensions and kernels of
epimorphisms in $\sK$, it follows that the class
$\sC\cap{}^{\perp_1}\{M\}$ is closed under extensions and
kernels of epimorphisms in~$\sK$.
 Finally, the class $\sC\cap{}^{\perp_1}\{M\}$ is closed under
coproducts in $\sK$, since it is closed under finite direct sums
and direct limits.

 We have shown that the class $\sC\cap{}^{\perp_1}\{M\}$
contains $\sS\cup\sT$ and is closed under extensions, kernels of
epimorphisms, coproducts, and direct limits in~$\sK$.
 Hence we can conclude that $\sC\cap{}^{\perp_1}\{M\}=\sC$, so
$\sC\subset{}^{\perp_1}\{M\}$ and $M\in\sD$.
\end{proof}

\begin{proof}[Proof of Theorem~B from Section~\ref{introd-theorem-B}]
 Proposition~\ref{chop-or-splice-prop}\,(2)\,$\Rightarrow$\,(1) as it
is stated above is \emph{not} applicable here, because the classes
$\sB$ and $\sD$ need not be closed under coproducts in $\sK$, while
countable products need not be exact in~$\sK$.
 So our argument is based on~\cite[Lemma~9.3]{PS6}.

 Let $D^\bu$ be an acyclic complex in $\sK$ with the terms $D^i\in\sD$
and the objects of cocycles $B^i\in\sB$.
 So we have short exact sequences $0\rarrow B^i\rarrow D^i\rarrow
B^{i+1}\rarrow0$ in~$\sK$.
 Taking the product of these short exact sequences over $i\in\boZ$,
we obtain a sequence
\begin{equation} \label{product-sequence}
 0\lrarrow\prod\nolimits_{i\in\boZ} B^i\lrarrow
 \prod\nolimits_{i\in\boZ}D^i\lrarrow\prod\nolimits_{i\in\boZ}B^i
 \lrarrow0.
\end{equation}

 In order to show that~\eqref{product-sequence} is exact, we
apply~\cite[Lemma~9.3]{PS6}.
 By assumption, the class $\sS\cup\sT$ contains a set of generators
of the Grothendieck category~$\sK$.
 So there exists a family of objects $(G_\xi)_{\xi\in\Xi}$ in
$\sS\cup\sT$ together with an epimorphism $G=\coprod_{\xi\in\Xi}G_\xi
\rarrow\prod_{i\in\boZ}B^i$ in~$\sK$.
 It remains to show that $\Ext^1_\sK(G,B^i)=0$ for every $i\in\boZ$.

 By~\cite[Corollary 8.3]{CoFu} or~\cite[Corollary A.2]{CoSt}, it
suffices to check that $\Ext^1_\sK(G_\xi,B^i)=0$ for every $i\in\boZ$
and $\xi\in\Xi$.
 There are two cases.
 If $G_\xi\in\sS$, then it remains to recall that
$B^i\in\sB=\sS^{\perp_1}$.
 If $G_\xi\in\sT$, then $G_\xi\in\sC$ and the projective dimension
of $G_\xi$ in $\sK$ is finite.
 From the short exact sequences $0\rarrow B^j\rarrow D^j\rarrow B^{j+1}
\rarrow0$ we get $\Ext^1_\sK(G_\xi,B^i)\simeq\Ext^2_\sK(G_\xi,B^{i-1})
\simeq\Ext^3_\sK(G_\xi,B^{i-2})\simeq\dotsb=0$, since
$\Ext^n_\sK(G_\xi,D^j)=0$ for all $j\in\boZ$ and $n\ge1$ as explained
in the proof of Theorem~\ref{theorem-B-as-periodicity-claim}
(cf.~\cite[proof of Proposition~9.4]{PS6}).
 So~\cite[Lemma~9.3]{PS6} tells that
the short sequence~\eqref{product-sequence} is exact.

 Applying~\cite[Corollary 8.3]{CoFu} or~\cite[Corollary A.2]{CoSt}
again, we see that both the classes $\sB$ and $\sD$ are closed under
infinite products in~$\sK$.
 Hence $\prod_{i\in\boZ}B^i\in\sB$ and $\prod_{i\in\boZ}D^i\in\sD$.
 So $\prod_{i\in\boZ}B^i$ is a $\sD$\+periodic object in $\sB$.
 By Theorem~\ref{theorem-B-as-periodicity-claim}, it follows that
$\prod_{i\in\boZ}B^i\in\sD$.
 Finally, the class $\sD$ is closed under direct summands in $\sK$,
hence $B^i\in\sD$ for all $i\in\boZ$.
\end{proof}

\Section{Generalized Flat/Projective and Fp-projective Periodicity~I}
\label{generalized-fp-projective-periodicity-I-secn}

 The aim of this section is to prove Theorem~0(a).
 It is restated below as
Theorem~\ref{theorem-0a-detailed-in-two-parts}(a) and
Corollary~\ref{theorem-0a-C-A-periodicity-cor}.
 The argument follows the ideas of the proof
of~\cite[Theorems~0.7--0.8 or Corollaries~4.7--4.9]{BHP}.
 The result is module-theoretic, but the proof has a category-theoretic
flavor in that the approach of~\cite{BHP} needs to be applied
\emph{within the class\/ $\sC$ viewed as an exact subcategory\/
$\sC\subset\Modr R$}.

 Let $\sK$ be an exact category (in Quillen's sense).
 We suggest the survey paper~\cite{Bueh} as a general reference
source on exact categories.
 The definition of a (\emph{hereditary}) \emph{cotorsion pair}
$(\sA,\sB)$ in $\sK$ was already given in the beginning of
Section~\ref{generalized-cotorsion-periodicity-secn}.
 The intersection of the two classes $\sA\cap\sB\subset\sK$ is called
the \emph{kernel} of a cotorsion pair $(\sA,\sB)$.
 Let us define the important concept of a \emph{complete}
cotorsion pair.

 A cotorsion pair $(\sA,\sB)$ in $\sK$ is said to be \emph{complete}
if for every object $K\in\sK$ there exist (admissible) short exact
sequences in $\sK$ of the form
\begin{gather}
 0\lrarrow B'\lrarrow A\lrarrow K\lrarrow0
 \label{spec-precover-sequence} \\
 0\lrarrow K\lrarrow B\lrarrow A'\lrarrow0
 \label{spec-preenvelope-sequence}
\end{gather}
with $A$, $A'\in\sA$ and $B$, $B'\in\sB$.
 The sequence~\eqref{spec-precover-sequence} is called
a \emph{special precover sequence}.
 The sequence~\eqref{spec-preenvelope-sequence} is called
a \emph{special preenvelope sequence}.
 Collectively, the sequences~(\ref{spec-precover-sequence}\+-%
\ref{spec-preenvelope-sequence}) are referred to as
the \emph{approximation sequences}.

 Let $\sE\subset\sK$ be a full subcategory closed under extensions.
 Then we endow $\sE$ with the exact category structure \emph{inherited
from} the exact category structure of~$\sK$.
 The short exact sequences in the inherited exact structure on $\sE$ are
the short exact sequences in $\sK$ with the terms belonging to~$\sE$.

\begin{lem} \label{kernel-is-injectives-in-left-class}
 Let $(\sC,\sD)$ be a complete cotorsion pair in an exact
category~$\sK$.
 Then the exact category\/ $\sC$ (with the exact structure inherited
from\/~$\sK$) has enough injective objects.
 The class of all injective objects in\/ $\sC$ is precisely
the kernel\/ $\sC\cap\sD$ of the cotorsion pair $(\sC,\sD)$.
 Dually, the exact category\/ $\sD$ has enough projective objects,
and the kernel\/ $\sC\cap\sD$ is precisely the class of all
projectives in\/~$\sD$.
\end{lem}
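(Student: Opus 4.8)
The plan is to translate injectivity of an object of $\sC$, taken in the inherited exact structure, into a vanishing condition for $\Ext^1$ that can be computed in $\sK$, and then to read off both assertions directly from the approximation sequences furnished by completeness. The only substantive point is the comparison of $\Ext^1$ computed in the subcategory with $\Ext^1$ computed in $\sK$; everything else is an unwinding of definitions.

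First I would record that, since $\sC={}^{\perp_1}\sD$ is closed under extensions in $\sK$, for any objects $C$, $I\in\sC$ every short exact sequence $0\to I\to X\to C\to0$ in $\sK$ has its middle term $X$ in $\sC$, hence is already an admissible short exact sequence of the exact structure on $\sC$ inherited from~$\sK$. It follows that $\Ext^1_\sC(C,I)=\Ext^1_\sK(C,I)$ for all $C$, $I\in\sC$. Therefore an object $I\in\sC$ is injective in the exact category $\sC$ if and only if $\Ext^1_\sK(C,I)=0$ for every $C\in\sC$, that is, if and only if $I\in\sC^{\perp_1}=\sD$. Together with the standing assumption $I\in\sC$, this already identifies the class of all injective objects of $\sC$ with the kernel $\sC\cap\sD$.

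Next, to show that $\sC$ has enough injectives, I would fix an arbitrary object $C\in\sC$ and apply completeness of the cotorsion pair $(\sC,\sD)$ to obtain a special preenvelope sequence $0\to C\to D\to C'\to0$ in $\sK$ with $D\in\sD$ and $C'\in\sC$. Since $C$, $C'\in\sC$ and $\sC$ is extension-closed, the middle term satisfies $D\in\sC$, so that $D\in\sC\cap\sD$ is injective in $\sC$ by the preceding step; and, all three terms lying in $\sC$, the sequence is an admissible short exact sequence of the inherited structure. Thus $C\hookrightarrow D$ is the required admissible monomorphism into an injective object. The dual assertions for $\sD$ follow symmetrically: as $\sD=\sC^{\perp_1}$ is extension-closed, $\Ext^1_\sD$ agrees with $\Ext^1_\sK$ on objects of $\sD$, so a $P\in\sD$ is projective in $\sD$ exactly when $P\in{}^{\perp_1}\sD=\sC$, i.e.\ when $P\in\sC\cap\sD$; and for any $D\in\sD$ a special precover sequence $0\to D'\to C\to D\to0$ has $D'$, $D\in\sD$, which forces $C\in\sC\cap\sD$ and exhibits an admissible epimorphism onto $D$ from a projective object of~$\sD$.

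I do not expect a genuine obstacle. The one step deserving care is the identity $\Ext^1_\sC=\Ext^1_\sK$ (respectively $\Ext^1_\sD=\Ext^1_\sK$) on the relevant pairs of objects, which rests entirely on the extension-closedness of $\sC$ (respectively $\sD$) --- a property automatic for both classes of any cotorsion pair, so that no hereditarity hypothesis is needed. It is also worth emphasizing that \emph{injective} and \emph{projective} here mean injective and projective in the exact categories $\sC$ and $\sD$, not in~$\sK$.
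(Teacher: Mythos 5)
Your proof is correct and is precisely the standard argument that the paper omits (its proof is ``left to the reader''): identify $\Ext^1_\sC$ with $\Ext^1_\sK$ on objects of the extension-closed class $\sC$, characterize injectives of $\sC$ as $\sC\cap\sC^{\perp_1}=\sC\cap\sD$, and read off enough injectives from the special preenvelope sequences, with the dual reasoning for $\sD$. All the delicate points (extension-closedness of both classes of a cotorsion pair, admissibility of the approximation sequences in the inherited exact structure, and the absence of any hereditarity hypothesis) are handled properly.
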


\begin{proof}
 The proof is left to the reader.
\end{proof}

 Let $\sK$ be an exact category and $\sE\subset\sK$ be a full
subcategory closed under extensions, endowed with the inherited
exact category structure.
 Let $(\sA,\sB)$ be a complete cotorsion pair in~$\sK$.
 We will say that the cotorsion pair $(\sA,\sB)$ \emph{restricts to}
(\emph{a complete cotorsion pair in}) the exact subcategory $\sE$ if
the pair of classes ($\sE\cap\sA$, $\sE\cap\sB$) is a complete
cotorsion pair in~$\sE$.

\begin{lem} \label{restricted-cotorsion-pair}
 Let $(\sA,\sB)$ be a complete cotorsion pair in an exact category\/
$\sK$, and let\/ $\sE\subset\sK$ be a full subcategory closed under
extensions and kernels of admissible epimorphisms.
 Assume that\/ $\sA\subset\sE$.
 Then \par
\textup{(a)} the cotorsion pair $(\sA,\sB)$ restricts to\/ $\sE$,
so $(\sA$, $\sE\cap\sB)$ is a complete cotorsion pair in\/~$\sE$; \par
\textup{(b)} if the cotorsion pair $(\sA,\sB)$ is hereditary in\/
$\sK$, then the restricted cotorsion pair $(\sA$, $\sE\cap\sB)$ is
hereditary in\/~$\sE$.
\end{lem}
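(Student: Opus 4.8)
The plan is to verify the two defining properties of a cotorsion pair restricted to $\sE$, namely the orthogonality (the ``$\perp$'' relations computed inside $\sE$) and the existence of the two approximation sequences inside $\sE$, and then to treat heredity via the criterion (1)--(4) for hereditary cotorsion pairs recalled in Section~\ref{generalized-cotorsion-periodicity-secn}. For part~(a), the key point is that completeness of $(\sA,\sB)$ in $\sK$ already produces, for any object $E\in\sE$, a special precover sequence $0\to B'\to A\to E\to 0$ and a special preenvelope sequence $0\to E\to B\to A'\to 0$ with $A,A'\in\sA$ and $B,B'\in\sB$. The first sequence lies entirely in $\sE$: indeed $A\in\sA\subset\sE$, $E\in\sE$, and since $\sE$ is closed under kernels of admissible epimorphisms, the kernel $B'$ of the admissible epimorphism $A\to E$ also lies in $\sE$, hence $B'\in\sE\cap\sB$. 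For the second sequence, $A'\in\sA\subset\sE$ and $E\in\sE$, and since $\sE$ is closed under extensions, the middle term $B$ lies in $\sE$, hence $B\in\sE\cap\sB$. So both approximation sequences relative to $(\sA,\,\sE\cap\sB)$ exist in $\sE$.

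First I would check the orthogonality relations. That $\Ext^1_{\sE}(A,B)$ vanishes for $A\in\sA$ and $B\in\sE\cap\sB$ is immediate, because Yoneda $\Ext^1$ computed in the exact subcategory $\sE$ injects into (in fact agrees with) $\Ext^1_\sK(A,B)$ for objects of $\sE$ — an extension in $\sE$ is in particular an extension in $\sK$, and conversely an extension of $A$ by $B$ in $\sK$ with both ends in $\sE$ has its middle term in $\sE$ by closure under extensions. Hence $\sE\cap\sB\subseteq (\sA)^{\perp_1}$ in $\sE$, and $\sA\subseteq {}^{\perp_1}(\sE\cap\sB)$ in $\sE$. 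For the reverse inclusions one uses the approximation sequences in the standard way: if $X\in\sE$ satisfies $\Ext^1_\sE(X,B)=0$ for all $B\in\sE\cap\sB$, apply the special precover sequence $0\to B'\to A\to X\to 0$ in $\sE$ with $B'\in\sE\cap\sB$; it splits, so $X$ is a direct summand of $A\in\sA$, and since $\sA$ is closed under direct summands (being the left-hand class of a cotorsion pair, it is closed under summands — this is a standard fact, or one can note $\sA = {}^{\perp_1}\sB$), we get $X\in\sA$. Dually, if $Y\in\sE$ satisfies $\Ext^1_\sE(A,Y)=0$ for all $A\in\sA$, the special preenvelope sequence $0\to Y\to B\to A'\to 0$ in $\sE$ splits, exhibiting $Y$ as a summand of $B\in\sE\cap\sB$, so $Y\in\sE\cap\sB$. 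This establishes that $(\sA,\,\sE\cap\sB)$ is a cotorsion pair in $\sE$, and together with the approximation sequences constructed above, a complete one; this proves~(a).

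For part~(b), I would invoke the equivalence of conditions (1)--(4) for hereditary cotorsion pairs. It suffices to check that the left class $\sA$ of the restricted pair is closed under kernels of admissible epimorphisms \emph{in $\sE$}. But an admissible epimorphism in $\sE$ between objects of $\sA$ is an admissible epimorphism in $\sK$ between objects of $\sA$, so its kernel lies in $\sA$ by heredity of $(\sA,\sB)$ in $\sK$; and this kernel automatically lies in $\sE$ (as $\sA\subseteq\sE$), so it lies in $\sE\cap\sA = \sA$. Hence condition~(1) holds for $(\sA,\,\sE\cap\sB)$ in $\sE$, and since this pair is complete by~(a), the equivalence gives that it is hereditary. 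I expect the only mild subtlety — the ``main obstacle'', such as it is — to be the bookkeeping that $\Ext^1$ and the notion of admissible short exact sequence in the inherited exact structure on $\sE$ agree with those computed in $\sK$ for objects lying in $\sE$; this is exactly where closure of $\sE$ under extensions (for identifying $\Ext^1$) and under kernels of admissible epimorphisms (for the precover sequence and for the heredity argument) are used, and once these identifications are made explicit the rest is formal.
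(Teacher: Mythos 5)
Your proof is correct and is exactly the standard argument that the paper delegates to the reference [Pal, Lemmas~1.5(a) and~1.6]: restricting the two approximation sequences to $\sE$ using the closure hypotheses, identifying $\Ext^1_\sE$ with $\Ext^1_\sK$ on objects of $\sE$, and checking heredity via closure of $\sA$ under kernels of admissible epimorphisms. All the steps check out, including the points where closure of $\sA$ and $\sB$ under direct summands (rather than closure of $\sE$) is what is actually needed.
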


\begin{proof}
 This is fairly standard and easy to prove.
 The details can be found, e.~g., in~\cite[Lemmas~1.5(a) and~1.6]{Pal}.
\end{proof}

 Given an additive category $\sK$, we denote by $\bC(\sK)$ the additive
category of complexes in~$\sK$ (with the usual morphisms of complexes)
and by $\bH(\sK)$ the triangulated homotopy category of complexes
in~$\sK$.
 So the morphisms in $\bH(\sK)$ are the cochain homotopy classes of
morphisms in $\bC(\sK)$.
 When $\sK$ is an exact category, the category $\bC(\sK)$ is endowed
with the exact category structure in which a short sequence of
complexes is exact if and only if it is exact at every degree.
 We denote by $K^\bu\longmapsto K^\bu[n]$ the functor of grading shift
on the complexes; so $K^\bu[n]^i=K^{n+i}$ for all $n$, $i\in\boZ$.

\begin{lem} \label{ext-homotopy-hom-lemma}
 Let\/ $\sK$ be an exact category, and let $A^\bu$ and $B^\bu$ be two
complexes in\/~$\sK$.
 Assume that\/ $\Ext^1_\sK(A^n,B^n)=0$ for every $n\in\boZ$.
 Then there is a natural isomorphism of abelian groups
$$
 \Ext^1_{\bC(\sK)}(A^\bu,B^\bu)\simeq\Hom_{\bH(\sK)}(A^\bu,B^\bu[1]).
$$
\end{lem}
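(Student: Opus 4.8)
The plan is to construct the isomorphism explicitly by identifying both sides with a concrete group built from the components of $A^\bu$ and $B^\bu$, namely the group of degreewise-split extensions of complexes. First I would recall that an element of $\Ext^1_{\bC(\sK)}(A^\bu,B^\bu)$ is represented by a short exact sequence of complexes $0\to B^\bu\to E^\bu\to A^\bu\to0$, which is exact in every degree. The hypothesis $\Ext^1_\sK(A^n,B^n)=0$ for all $n$ forces the degree-$n$ component $0\to B^n\to E^n\to A^n\to0$ to split in $\sK$, so we may choose splittings and write $E^n=A^n\oplus B^n$ as objects; the differential of $E^\bu$ then has the matrix form $\begin{pmatrix} d_A & 0 \\ h & d_B\end{pmatrix}$ for some morphisms $h\colon A^n\to B^{n+1}$. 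The condition $d^2=0$ is equivalent to $d_B h + h d_A = 0$, i.e. $h$ is a cocycle of degree $1$ in the complex $\Hom^\bu_\sK(A^\bu,B^\bu)$; changing the choice of splitting changes $h$ by a coboundary, and two such extensions are equivalent in $\Ext^1_{\bC(\sK)}$ exactly when the corresponding $h$'s are cohomologous. This identifies $\Ext^1_{\bC(\sK)}(A^\bu,B^\bu)$ with $H^1\Hom^\bu_\sK(A^\bu,B^\bu)$.

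Next I would recall the standard fact that $\Hom_{\bH(\sK)}(A^\bu,B^\bu[1])=H^1\Hom^\bu_\sK(A^\bu,B^\bu)$: a degree-one cocycle in $\Hom^\bu_\sK(A^\bu,B^\bu)$ is precisely a chain map $A^\bu\to B^\bu[1]$, and the coboundaries are precisely the null-homotopic ones. Combining this with the identification of the previous paragraph gives the desired isomorphism $\Ext^1_{\bC(\sK)}(A^\bu,B^\bu)\simeq\Hom_{\bH(\sK)}(A^\bu,B^\bu[1])$. I would then check naturality in $A^\bu$ and $B^\bu$ by tracing through how a morphism of complexes acts on representing cocycles $h$ — on the $\Ext$ side by pullback/pushout of extensions, on the homotopy side by pre-/post-composition — and observing both descriptions reduce to the obvious action on $\Hom^\bu_\sK(A^\bu,B^\bu)$.

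The main obstacle is the degreewise splitting step: one must be careful that the choices of splittings $E^n\cong A^n\oplus B^n$ are made independently in each degree (no compatibility with differentials is assumed or needed), and that the resulting matrix form of the differential genuinely captures the extension up to equivalence. A secondary subtlety is that we are working in an arbitrary exact category rather than an abelian one, so "$E^n = A^n\oplus B^n$" should be read as a choice of a direct-sum decomposition witnessing the split admissible short exact sequence, and all diagram-chasing must be phrased in terms of admissible monomorphisms and epimorphisms; but since the splitting reduces everything to biproducts, the manipulations are the same as in the abelian case. I would handle this by invoking the standard description of $\Ext^1$ in exact categories (as in~\cite{Bueh}) and the Yoneda-type computation, rather than redoing it from scratch.
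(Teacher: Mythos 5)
Your proposal is correct and follows essentially the same route as the paper: the paper's proof also reduces to the observation that the subgroup of termwise split extensions in $\Ext^1_{\bC(\sK)}(A^\bu,B^\bu)$ is naturally isomorphic to $\Hom_{\bH(\sK)}(A^\bu,B^\bu[1])$, with the hypothesis $\Ext^1_\sK(A^n,B^n)=0$ forcing every extension of complexes to be termwise split (the paper simply delegates the cocycle/coboundary computation to~\cite[Lemma~1.6]{BHP}). Your explicit matrix-form argument is exactly the content of that reference, and your remarks on the exact-category subtleties match the paper's comment that the termwise split statement is really an assertion about additive categories.
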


\begin{proof}
 This is also standard and well-known.
 More generally, for any two complexes $A^\bu$ and $B^\bu$ in $\sK$,
the subgroup of \emph{termwise split} extensions $0\rarrow B^\bu\rarrow
C^\bu\rarrow A^\bu\rarrow0$ in $\Ext^1_\sK(A^\bu,B^\bu)$ is naturally
isomorphic to the group of morphisms $A^\bu\rarrow B^\bu[1]$ in
the homotopy category~$\bH(\sK)$.
 Stated in this form, the assertion essentially does not depend on
the exact structure on $\sK$ and is applicable to any additive category.
 We refer to~\cite[Lemma~1.6]{BHP} for the details (which are the same
in the general case as in the case of an abelian category $\sK$
discussed in~\cite{BHP}).
\end{proof}

 At this point, let us specialize our discussion to Grothendieck
abelian categories~$\sK$.
 Let $F\in\sK$ be an object and $\alpha$~be an ordinal.
 A family of subobjects $(F_\beta\subset F)_{0\le\beta\le\alpha}$ is
said to be an \emph{$\alpha$\+indexed filtration} on $F$ if
the following conditions are satisfied:
\begin{itemize}
\item $F_0=0$ and $F_\alpha=F$;
\item $F_\gamma\subset F_\beta$ for all $0\le\gamma\le\beta\le\alpha$;
\item $F_\beta=\bigcup_{\gamma<\beta}F_\gamma$ for all limit ordinals
$\beta\le\alpha$.
\end{itemize}

 An object $F\in\sK$ endowed with an ordinal-indexed filtration
$(F_\beta)_{0\le\beta\le\alpha}$ is said to be \emph{filtered by}
the quotient objects $S_\beta=F_{\beta+1}/F_\beta$, \
$0\le\beta<\alpha$.
 In an alternative terminology, the object $F$ is called
a \emph{transfinitely iterated extension} (\emph{in the sense of
the direct limit}) of the objects $(S_\beta)_{0\le\beta<\alpha}$.

 Given a class of objects $\sS\subset\sK$, the class of all objects
in $\sK$ filtered by (objects isomorphic to) objects from $\sS$
is denoted by $\Fil(\sS)\subset\sK$.
 A class of objects $\sF\subset\sK$ is said to be \emph{deconstructible}
if there exists a \emph{set} of objects $\sS\subset\sK$ such that
$\sF=\Fil(\sS)$.
 It is easy to see that any deconstructible class (in the sense of
this definition) is closed under transfinitely iterated extensions.

 The following result is known as
the \emph{Eklof lemma}~\cite[Lemma~1]{ET}, \cite[Lemma~6.2]{GT}.

\begin{lem} \label{eklof-lemma}
 For any class of objects\/ $\sB\subset\sK$, the class\/
${}^{\perp_1}\sB$ is closed under transfinitely iterated extensions.
 In other words, $\Fil({}^{\perp_1}\sB)={}^{\perp_1}\sB$.
\end{lem}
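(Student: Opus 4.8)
The plan is to establish the nontrivial inclusion $\Fil({}^{\perp_1}\sB)\subset{}^{\perp_1}\sB$ by transfinite induction on the length of the filtration, producing a splitting of a given extension one filtration step at a time; since the opposite inclusion is immediate (each object is filtered by itself, as a length\+one filtration), this also yields the equality $\Fil({}^{\perp_1}\sB)={}^{\perp_1}\sB$.

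Let $F\in\sK$ carry a filtration $(F_\beta)_{0\le\beta\le\alpha}$ all of whose quotients $S_\beta=F_{\beta+1}/F_\beta$ lie in ${}^{\perp_1}\sB$, and fix $B\in\sB$; I must show that every short exact sequence $0\rarrow B\rarrow E\xrightarrow{\,\pi\,}F\rarrow0$ in $\sK$ splits. Put $E_\beta=\pi^{-1}(F_\beta)\subset E$. Then $(E_\beta)_{0\le\beta\le\alpha}$ is a filtration of $E$ with $E_0=B$, $E_\alpha=E$, and $E_{\beta+1}/E_\beta\simeq F_{\beta+1}/F_\beta=S_\beta$; here the limit-step identity $E_\beta=\bigcup_{\gamma<\beta}E_\gamma$ holds because preimages commute with the relevant joins of subobjects, filtered colimits being exact in the Grothendieck category~$\sK$. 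The aim is to construct sections $s_\beta\:F_\beta\rarrow E_\beta$ of $\pi|_{E_\beta}$, compatibly in the sense that $s_\gamma|_{F_\beta}=s_\beta$ whenever $\beta\le\gamma$; then $s_\alpha$ splits~$\pi$.

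The recursion runs as follows. Set $s_0=0$. At a limit ordinal $\beta$, take $s_\beta=\varinjlim_{\gamma<\beta}s_\gamma$, which is legitimate precisely because the $s_\gamma$ form a cocone and $F_\beta=\bigcup_{\gamma<\beta}F_\gamma$ is the colimit of the chain; that $s_\beta$ is again a section follows from the uniqueness property of maps out of a colimit. For the successor step, I would apply $\Hom_\sK({-},B)$ to $0\rarrow F_\beta\rarrow F_{\beta+1}\rarrow S_\beta\rarrow0$: as $S_\beta\in{}^{\perp_1}\sB$ and $B\in\sB$ we have $\Ext^1_\sK(S_\beta,B)=0$, so $\Hom_\sK(F_{\beta+1},B)\to\Hom_\sK(F_\beta,B)$ is surjective and $\Ext^1_\sK(F_{\beta+1},B)\to\Ext^1_\sK(F_\beta,B)$ is injective. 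The extension class of $0\rarrow B\rarrow E_{\beta+1}\rarrow F_{\beta+1}\rarrow0$ restricts along $F_\beta\hookrightarrow F_{\beta+1}$ to that of $0\rarrow B\rarrow E_\beta\rarrow F_\beta\rarrow0$, which vanishes (it is split by $s_\beta$); by the injectivity, the former class vanishes too, so there is \emph{some} section $s'\:F_{\beta+1}\rarrow E_{\beta+1}$. It need not restrict to $s_\beta$, but the difference $s_\beta-s'|_{F_\beta}$ takes values in $\ker\pi=B$, hence is a morphism $F_\beta\rarrow B$; by the surjectivity above it extends to some $\tilde d\:F_{\beta+1}\rarrow B$, and then $s_{\beta+1}:=s'+\tilde d$ is a section of $\pi|_{E_{\beta+1}}$ with $s_{\beta+1}|_{F_\beta}=s_\beta$. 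This closes the induction, and $s_\alpha$ is the desired splitting.

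The main obstacle is the bookkeeping in the successor step: obtaining \emph{some} splitting of $0\rarrow B\rarrow E_{\beta+1}\rarrow F_{\beta+1}\rarrow0$ is immediate from $\Ext^1_\sK(S_\beta,B)=0$, but it must be patched to agree with the previously chosen $s_\beta$ so that the colimits at limit stages make sense — this is exactly what the lifting through the surjection $\Hom_\sK(F_{\beta+1},B)\to\Hom_\sK(F_\beta,B)$ accomplishes. Beyond plain abelianness, the only use of the Grothendieck hypothesis is the exactness of filtered colimits, invoked to see that $(E_\beta)$ is again a filtration and that passing to colimits at limit ordinals is harmless; everything else rests on the long exact sequence of $\Ext_\sK$ in the first argument, available because $\sK$ has enough injectives.
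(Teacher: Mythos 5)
Your proof is correct, and it is precisely the classical Eklof--Trlifaj transfinite-induction argument: pull the filtration back along $\pi$ and build compatible sections $s_\beta\:F_\beta\rarrow E_\beta$, using $\Ext^1_\sK(S_\beta,B)=0$ both to split the successor-step extension and to lift the correcting morphism $F_\beta\rarrow B$ so that the sections agree at limit stages. The paper gives no independent proof but simply cites the references (Eklof--Trlifaj, G\"obel--Trlifaj, and the lemma's analogues in the author's earlier papers), where exactly this argument appears, so your proposal matches the intended proof.
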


\begin{proof}
 This assertion, properly understood (as per the definitions
in Section~\ref{exact-grothendieck-secn} below), holds
in any exact category~$\sK$.
 See the references in~\cite[Lemma~1.1]{BHP}.
 The general formulation can be also found in~\cite[Lemma~7.5]{PS6}.
\end{proof}

 The next theorem goes back to Eklof and Trlifaj~\cite[Theorems~2
and~10]{ET}, \cite[Theorem~6.11 and Corollary~6.14]{GT}.
 For any class of objects $\sF\subset\sK$, we denote by $\sF^\oplus
\subset\sK$ the class of all direct summands of objects from $\sF$
in~$\sK$.

\begin{thm} \label{eklof-trlifaj-theorem}
 Let\/ $\sK$ be a Grothendieck category and\/ $(\sA,\sB)$ be
the cotorsion pair generated by a \emph{set} of objects\/
$\sS\subset\sK$.  Then \par
\textup{(a)} If the class\/ $\sA$ is generating in\/ $\sK$, then
the cotorsion pair $(\sA,\sB)$ is complete. \par
\textup{(b)} If the class\/ $\Fil(\sS)$ is generating in\/ $\sK$,
then\/ $\sA=\Fil(\sS)^\oplus$.
\end{thm}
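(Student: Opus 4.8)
The plan is to follow the classical Eklof--Trlifaj argument (\cite{ET}, \cite{GT}), carried out inside the Grothendieck category~$\sK$. For part~(a) I would first reduce the construction of both approximation sequences to the existence of \emph{special preenvelope sequences with cokernel in\/ $\Fil(\sS)$}, i.e.\ short exact sequences $0\rarrow M\rarrow B\rarrow A'\rarrow0$ with $B\in\sB$ and $A'\in\Fil(\sS)$, for every object $M\in\sK$. Granting this, the special preenvelope sequence for a given $K$ is produced directly (note $\Fil(\sS)\subset\sA$, since $\sS\subset\sA$ and $\sA={}^{\perp_1}\sB$ is closed under transfinitely iterated extensions by the Eklof lemma). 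For the special precover sequence, use that $\sA$ is generating and closed under coproducts to choose an epimorphism $A_0\twoheadrightarrow K$ with $A_0\in\sA$, set $Y=\ker(A_0\rarrow K)$, take a special preenvelope sequence $0\rarrow Y\rarrow B\rarrow A'\rarrow0$ of~$Y$, and form the pushout $C$ of $A_0\larrow Y\rarrow B$: one obtains short exact sequences $0\rarrow A_0\rarrow C\rarrow A'\rarrow0$ and $0\rarrow B\rarrow C\rarrow K\rarrow0$, the first of which shows $C\in\sA$ (an extension of $A'\in\sA$ by $A_0\in\sA$), so the second is the desired special precover sequence.

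For the preenvelope construction I would build, for a fixed $M\in\sK$, a continuous chain of monomorphisms $M=M_0\hookrightarrow M_1\hookrightarrow\dotsb\hookrightarrow M_\alpha\hookrightarrow\dotsb$, $\alpha\le\kappa$, as follows. At a successor step, let $E_\alpha=\bigsqcup_{S\in\sS}\Ext^1_\sK(S,M_\alpha)$ (a set, since $\sS$ is a set and each $\Ext^1$ is a set in~$\sK$); for each $\epsilon\in E_\alpha$ choose a representing extension $0\rarrow M_\alpha\rarrow N_\epsilon\rarrow S_\epsilon\rarrow0$ with $S_\epsilon\in\sS$; take the coproduct of these extensions over $\epsilon\in E_\alpha$ (coproducts are exact in~$\sK$) and push it out along the codiagonal $\coprod_\epsilon M_\alpha\rarrow M_\alpha$, obtaining $0\rarrow M_\alpha\rarrow M_{\alpha+1}\rarrow\coprod_\epsilon S_\epsilon\rarrow0$. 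At limit ordinals take the (monomorphic, directed) colimit. Then each successive quotient $M_{\alpha+1}/M_\alpha$ is a coproduct of objects of~$\sS$, so $M_\kappa/M\in\Fil(\sS)$ by concatenating filtrations; and the class of any $\epsilon\in E_\alpha$ dies in $\Ext^1_\sK(S_\epsilon,M_{\alpha+1})$ (its pushforward along $M_\alpha\rarrow M_{\alpha+1}$ splits, via the coprojection $N_\epsilon\rarrow M_{\alpha+1}$). Choosing $\kappa$ to be a regular cardinal large enough that every $S\in\sS$ is $\kappa$\+presentable---which exists because $\sK$, being Grothendieck, is locally presentable and $\sS$ is a set---the functor $\Ext^1_\sK(S,{-})$ commutes with the $\kappa$\+directed colimit $M_\kappa=\varinjlim_{\alpha<\kappa}M_\alpha$, so every class in $\Ext^1_\sK(S,M_\kappa)$ comes from some $\Ext^1_\sK(S,M_\alpha)$ with $\alpha<\kappa$ and is therefore already zero there. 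Hence $M_\kappa\in\sS^{\perp_1}=\sB$, and $0\rarrow M\rarrow M_\kappa\rarrow M_\kappa/M\rarrow0$ is the required sequence.

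For part~(b), the inclusion $\Fil(\sS)^\oplus\subset\sA$ is immediate: $\Fil(\sS)\subset\sA$ as above, and $\sA={}^{\perp_1}\sB$ is closed under direct summands. Conversely, let $A\in\sA$. Since $\Fil(\sS)$ is generating (hence so is $\sA\supset\Fil(\sS)$, making part~(a) available), and a coproduct of $\Fil(\sS)$\+objects is again in $\Fil(\sS)$, there is an epimorphism $A_0\twoheadrightarrow A$ with $A_0\in\Fil(\sS)$; set $Y=\ker(A_0\rarrow A)$. Running the preenvelope construction on~$Y$ gives $0\rarrow Y\rarrow B\rarrow A'\rarrow0$ with $B\in\sB$ and $A'\in\Fil(\sS)$, and the pushout $C$ of $A_0\larrow Y\rarrow B$ sits in $0\rarrow A_0\rarrow C\rarrow A'\rarrow0$, so $C\in\Fil(\sS)$, and in $0\rarrow B\rarrow C\rarrow A\rarrow0$. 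The latter splits because $\Ext^1_\sK(A,B)=0$, whence $A$ is a direct summand of $C\in\Fil(\sS)$, i.e.\ $A\in\Fil(\sS)^\oplus$.

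The one genuinely delicate point is the claim, used at the end of the preenvelope construction, that $\Ext^1_\sK(S,{-})$ preserves $\kappa$\+directed colimits when $S$ is $\kappa$\+presentable. For module categories this is elementary: resolve $S$ by $\kappa$\+presentable (e.g.\ $<\kappa$\+generated free) modules and use that a $<\kappa$\+fold product commutes with $\kappa$\+directed colimits. For a general Grothendieck category, which need not have enough projectives, one reduces to this case via Gabriel--Popescu (or argues directly from exactness of $\kappa$\+directed colimits together with local $\kappa$\+presentability of~$\sK$), keeping track of the cardinal bounds; I expect this bookkeeping, rather than any new idea, to be the main obstacle.
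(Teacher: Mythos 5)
Your proposal is correct in substance, and it supplies an actual argument where the paper only gives a citation: the paper's proof of Theorem~\ref{eklof-trlifaj-theorem} consists of pointing to \cite[Theorem~1.2]{BHP}, to \cite{ET} and \cite[Theorem~6.11 and Corollary~6.14]{GT}, and to Theorem~\ref{eklof-trlifaj-efficient-exact} (the Saor\'\i n--\v St\!'ov\'\i\v cek version for efficient exact categories). What you write out --- the transfinite iteration killing $\Ext^1_\sK(S,{-})$ by pushing out coproducts of representing extensions along the codiagonal, Salce's trick to get the special precover sequence from the special preenvelope sequence, and the splitting argument for part~(b) --- is exactly the classical Eklof--Trlifaj proof, and every step of it transfers to a Grothendieck category as you describe (coproducts and direct limits are exact, $\Ext^1_\sK(S,M_\alpha)$ is a set, a coproduct of objects of $\sS$ is $\sS$\+filtered, and filtrations concatenate).

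The one step whose justification needs repair is the final claim that $\Ext^1_\sK(S,M_\kappa)=0$. The assertion that $\Ext^1_\sK(S,{-})$ preserves $\kappa$\+directed colimits for every $\kappa$\+presentable object $S$ is \emph{false} in a general Grothendieck category --- this is precisely the gap between ``$\kappa$\+presentable'' ($=\kappaP_1$) and ``type $\kappaP_2$'' that the paper sets up in Section~\ref{direct-limits-of-kappa-P-classes} --- and the Gabriel--Popescu reduction you mention does not obviously help, since the embedding is only left exact and does not preserve $\Ext^1$. Fortunately you need only the \emph{surjectivity} of $\varinjlim_{\alpha<\kappa}\Ext^1_\sK(S,M_\alpha)\rarrow\Ext^1_\sK(S,M_\kappa)$ for your smooth chain of monomorphisms, and that does follow from $\kappa$\+presentability of $S$ together with exactness of direct limits: given an extension $0\rarrow M_\kappa\rarrow N\rarrow S\rarrow0$, right exactness of the colimit gives $S=\varinjlim_{\alpha<\kappa}N/M_\alpha$, so the identity morphism of $S$ factors through some $N/M_\alpha$; hence the quotient extension $0\rarrow M_\kappa/M_\alpha\rarrow N/M_\alpha\rarrow S\rarrow0$ splits, and the long exact sequence associated with $0\rarrow M_\alpha\rarrow M_\kappa\rarrow M_\kappa/M_\alpha\rarrow0$ shows that the original class lies in the image of $\Ext^1_\sK(S,M_\alpha)$, where it dies at the next step by construction. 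With this substituted for the commutation claim, your proof is complete; it is, in effect, the same smallness-with-respect-to-chains-of-monomorphisms mechanism that \cite{SaoSt} and \cite{Sto-ICRA} use in the generalized small object argument, phrased at the level of $\Ext^1$ rather than of lifting problems.
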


\begin{proof}
 This result, properly stated, holds in any locally presentable
abelian category~$\sK$.
 See~\cite[Theorem~1.2]{BHP} for a discussion with references,
and Theorem~\ref{eklof-trlifaj-efficient-exact} below for
a version for efficient exact categories.
\end{proof}

 We refer to the book~\cite[Definition~1.9 and Theorem~1.11]{AR} for
the definition of a \emph{locally finitely presentable} category.
 Any locally finitely presentable abelian category is
Grothendieck~\cite[Proposition~1.59]{AR}.
 We will have a detailed discussion of such categories below in
Section~\ref{direct-limit-closures-of-FP-classes}, where several
further references are suggested.
 The abelian category of modules $\Modr R$ is locally finitely
presentable for any ring~$R$.

 The following result of \v St\!'ov\'\i\v cek was already used in
a similar way in the paper~\cite{BHP}, where it is stated
as~\cite[Lemma~3.4]{BHP}.

\begin{prop} \label{stovicek-countable-hill-prop}
 Let\/ $\sK$ be a locally finitely presentable abelian category and\/
$\sS\subset\sK$ be a class of finitely presentable objects closed
under extensions in\/~$\sK$.
 Let $A^\bu$ be a complex in\/ $\sK$ whose terms are\/ $\sS$\+filtered
objects.
 Then the complex $A^\bu$, viewed as an object of the abelian category
of complexes\/ $\bC(\sK)$, is filtered by bounded below complexes
of objects from\/~$\sS$.
\end{prop}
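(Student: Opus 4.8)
The plan is to apply the generalized Hill lemma (cf.\ \cite[\S7]{GT}, and \v St\!'ov\'\i\v cek's paper for the Grothendieck-category version) separately to each term $A^n$ of the complex, and then to assemble a filtration of the whole complex $A^\bu$, viewed as an object of $\bC(\sK)$, by a transfinite recursion in which each successor step adjoins a suitable bounded below subcomplex. First one fixes, for every $n\in\boZ$, an $\sS$\+filtration $(A^n_\beta)_{0\le\beta\le\alpha_n}$ of $A^n$ with all successive quotients in~$\sS$. Since the objects of $\sS$ are finitely presentable, the Hill lemma applies with $\kappa=\aleph_1$ and yields, for each~$n$, a complete sublattice $\mathcal H^n$ of the lattice of subobjects of $A^n$ which contains all the $A^n_\beta$, is closed under arbitrary sums and intersections, satisfies $M/N\in\Fil(\sS)$ for all $N\subset M$ in~$\mathcal H^n$, and enjoys the finiteness property that for every $N\in\mathcal H^n$ and every countably generated subobject $X\subset A^n$ there is an $M\in\mathcal H^n$ with $N+X\subset M$ and $M/N$ filtered by a \emph{countable} chain of objects of~$\sS$.

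Next one constructs an increasing continuous chain of subcomplexes $0=F_0\subset F_1\subset\dotsb$ of $A^\bu$, with union $A^\bu$, subject to the invariant $F_\xi^n\in\mathcal H^n$ for all $n\in\boZ$. At limit ordinals one takes termwise unions, which stay in the~$\mathcal H^n$. At a successor ordinal, assuming $F_\xi\ne A^\bu$, one picks (following a fixed bookkeeping that exhausts every $A^n$) a degree $n_0$ and a finitely generated subobject $Y\subset A^{n_0}$ with $Y\not\subset F_\xi^{n_0}$, and defines $F_{\xi+1}$ by an upward sweep: put $F_{\xi+1}^n=F_\xi^n$ for $n<n_0$; choose $F_{\xi+1}^{n_0}\in\mathcal H^{n_0}$ with $F_\xi^{n_0}+Y\subset F_{\xi+1}^{n_0}$ and $F_{\xi+1}^{n_0}/F_\xi^{n_0}$ filtered by a countable chain of objects of~$\sS$; and, inductively for each $n\ge n_0$, choose $F_{\xi+1}^{n+1}\in\mathcal H^{n+1}$ with $F_\xi^{n+1}+d(F_{\xi+1}^n)\subset F_{\xi+1}^{n+1}$ and $F_{\xi+1}^{n+1}/F_\xi^{n+1}$ again filtered by a countable chain of objects of~$\sS$ (this is legitimate because $F_{\xi+1}^n$ is a countable-length extension of $F_\xi^n$, hence has countably generated image under~$d$ modulo $F_\xi^{n+1}$). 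Since enlarging a degree only forces enlargements of strictly higher degrees, $F_{\xi+1}$ is a subcomplex, and $F_{\xi+1}/F_\xi$ is a bounded below complex all of whose terms are countably $\sS$\+filtered.

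It remains to refine each such piece into a transfinitely iterated extension of bounded below complexes whose terms lie in $\sS$ itself; here one uses that a countable $\sS$\+filtration has all its finite initial segments in $\sS$ (because $\sS$ is closed under extensions), together with the fact that the differential image of an object of $\sS$ is finitely generated, hence lands in a finite stage of the corresponding countable union one degree up. Carrying this out degree by degree, dovetailed over the (countably many) degrees of the piece, filters the piece by bounded below complexes of objects of~$\sS$; composing with the outer filtration — a transfinitely iterated extension of transfinitely iterated extensions being again one — shows that $A^\bu$ is filtered by bounded below complexes of objects from~$\sS$. The main obstacle is the bookkeeping: one must arrange the nested recursions so that the Hill invariant $F_\xi^n\in\mathcal H^n$ persists at every stage, the chain is continuous at limits, and every element of every $A^n$ is eventually absorbed, while checking that the upward sweep genuinely produces a subcomplex and that the innermost refinement goes through uniformly in all degrees. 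Alternatively, one may first invoke the Hill lemma to reduce to the case of a bounded below complex with $\sS$\+filtered terms, and then handle that case by the same dovetailing refinement.
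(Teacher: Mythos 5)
Your architecture --- the Hill lemma applied to each term, then a transfinite upward sweep producing a filtration of $A^\bu$ by subcomplexes whose factors are bounded below, then a refinement of the factors --- is exactly the strategy behind the result the paper cites (\v St\!'ov\'\i\v cek's deconstructibility theorem for complexes with filtered terms), and your steps (1) and (2) are sound. The gap is in step (3). The Hill lemma for $\kappa=\aleph_1$ only guarantees that the quotients $M/N$ in property (H4), and hence the terms of your factors $F_{\xi+1}/F_\xi$, carry $\sS$\+filtrations of \emph{countable ordinal} length, not of length~$\omega$. Your refinement, however, treats each such term as the directed union of an $\omega$\+chain of subobjects belonging to~$\sS$: both the appeal to ``finite initial segments'' (which do not exhaust an object filtered in length, say, $\omega+1$) and the claim that the finitely generated differential image of an $\sS$\+piece ``lands in a finite stage of the corresponding countable union one degree up'' presuppose a length\+$\omega$ filtration. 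Nothing you have invoked produces one, and the reduction from countable ordinal length to length~$\omega$ is precisely the nontrivial point.

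What step (3) actually needs is the following lemma, which you neither state nor prove: if $\sS$ consists of finitely presentable objects and is closed under extensions, then every finitely generated subobject $X$ of an $\sS$\+filtered object $Q$ is contained in a subobject $P$ with $P\in\sS$ and $Q/P\in\Fil(\sS)$; equivalently, every countably generated $\sS$\+filtered object admits an $\sS$\+filtration of length at most~$\omega$. This is true but is not a formal consequence of the Hill lemma. Its proof is a transfinite induction on the length of the filtration of~$Q$: at a limit stage $Q_\tau=\bigcup_{\alpha<\tau}Q_\alpha$ one chooses a finitely generated $X\subseteq E$ mapping onto the top factor $S\in\sS$, observes that $\ker(X\to S)$ is finitely generated because $S$ is finitely presentable, hence is contained in some $Q_\alpha$, and concludes that the extension of $S$ by $Q_\tau$ is a pushout of an extension of $S$ by $Q_\alpha$; at successor stages one collapses consecutive factors using the closure of $\sS$ under extensions. (This is where extension\+closedness and finite presentability do real work, beyond the trivial observation that finite iterated extensions stay in~$\sS$.) Once this lemma is in hand, your dovetailed refinement does go through, because an $\omega$\+indexed recursion has no limit stages at which the invariant $Q^n/P_k^n\in\Fil(\sS)$ could be lost. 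So the overall plan is correct and matches the paper's intended argument, but the innermost step rests on an unjustified --- and, as literally stated, false --- identification of countable\+length filtrations with $\omega$\+indexed unions of their finite stages.
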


\begin{proof}
 This is the particular case of~\cite[(proof of) Proposition~4.3]{Sto0}
for the countable cardinal $\kappa=\aleph_0$.
 The argument is based on the Hill lemma (\cite[Theorem~2.1]{Sto0}
or~\cite[Theorem~7.10]{GT}).
\end{proof}

 In addition to the abelian exact structure on the module category
$\sK=\Modr R$, we are interested in the pure exact structure.
 The definition of the pure exact structure on $\Modr R$ was already
given in Section~\ref{generalized-cotorsion-periodicity-secn}.
 A complex in $\Modr R$ is said to be \emph{pure acyclic} (or
\emph{pure exact}) if it is acyclic in the pure exact structure,
i.~e., can be obtained by splicing pure short exact sequences.
 The following result due to Neeman~\cite{Neem} and
\v St\!'ov\'\i\v cek~\cite{Sto} is a stronger version of the pure
pure-projective periodicity theorem (item~(2) or~(2$^\text{c}$) on
the list of Section~\ref{introd-seven-items}).

\begin{thm} \label{neeman-stovicek-pure-projective-pure-acyclic}
 Let $R$ be an associative ring.
 Let $P^\bu$ be a complex of pure-projective $R$\+modules, and let
$X^\bu$ be a pure acyclic complex of $R$\+modules.
 Then any morphism of complexes of $R$\+modules $P^\bu\rarrow X^\bu$
is homotopic to zero.
\end{thm}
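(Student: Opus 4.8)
The plan is to reduce the statement to a statement about the homotopy category of pure-projective modules and then invoke a strong form of the pure-projective periodicity phenomenon. First I would recall that a complex $P^\bu$ of pure-projective modules is, up to a natural quasi-isomorphism in the pure exact structure, a complex of direct summands of coproducts of finitely presented modules; and the key structural input is that the class of finitely presented modules is a set of generators for the pure exact category $(\Modr R, \text{pure})$, whose exact structure is governed by $\Hom_R(S,-)$ for $S$ finitely presented. The idea is that a morphism $f\:P^\bu\rarrow X^\bu$ into a pure acyclic complex should vanish in $\bH(\Modr R)$ because $X^\bu$ is "pure-acyclic", hence looks like zero from the perspective of pure-projectives, and the obstruction to contracting $f$ lives in $\Hom_{\bH(\Modr R)}(P^\bu, X^\bu)$, which one wants to show is zero.

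Concretely, I would proceed as follows. Using Lemma~\ref{ext-homotopy-hom-lemma} applied in the \emph{pure} exact structure on $\Modr R$ — noting that $\Ext^1_{\text{pure}}(P^n, X^n) = 0$ since each $P^n$ is pure-projective — one gets $\Hom_{\bH(\Modr R)}(P^\bu, X^\bu[1]) \simeq \Ext^1_{\bC_{\text{pure}}(\Modr R)}(P^\bu, X^\bu)$, where $\bC_{\text{pure}}$ denotes the category of complexes with its degreewise pure exact structure. So it suffices to show that this latter $\Ext^1$ group vanishes (after a shift, or directly: one wants $\Hom_{\bH}(P^\bu,X^\bu)=0$, so apply the lemma to $X^\bu[-1]$ in place of $X^\bu$, which is still pure acyclic). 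The next step is to observe that $X^\bu$, being pure acyclic, is the direct limit (along $\boZ$, or a suitable filtered system) of its "brutal truncations" which are complexes built out of pure short exact sequences; more usefully, $X^\bu$ as an object of $\bC_{\text{pure}}(\Modr R)$ is \emph{pure acyclic}, hence has a presentation exhibiting it as filtered by (shifts of) pure short exact three-term complexes $0\to K\to L\to M\to 0$ with $K$ a pure submodule. A dimension-shifting / Hill-lemma type argument — analogous to Proposition~\ref{stovicek-countable-hill-prop} but for the pure exact structure — would then reduce the computation of $\Ext^1_{\bC_{\text{pure}}}(P^\bu,X^\bu)$ to computing $\Ext^1$ against such elementary acyclic complexes, which vanishes because $P^\bu$ is a complex of pure-projectives and these building-block complexes are pure acyclic (split exact in each degree up to the pure structure).

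Alternatively, and perhaps more cleanly, I would invoke St\!'ov\'\i\v cek's theorem directly: the pure acyclic complexes of pure-projective modules form the class of trivial objects in a model structure on $\bC(\Modr R)$ whose homotopy category is the pure derived category, and pure-projective complexes that are "fibrant-cofibrant" are precisely contractible ones. The cleanest route is: reduce to the case where $P^\bu$ is a bounded-above complex of \emph{finitely presented} modules by writing $P^\bu$ as a coproduct-summand of such (coproducts and summands are harmless since $\bH(\Modr R)(-, X^\bu)$ sends coproducts to products and a morphism out of a summand extends to the whole); for a bounded-above complex of finitely presented modules mapping to a pure acyclic complex, build the null-homotopy degree by degree, using at each step that $\Hom_R(P^n, -)$ is exact on pure short exact sequences (the defining property of finitely presented modules relative to purity) to lift the partial homotopy. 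This induction is the heart of the matter.

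The hard part will be handling the \emph{unbounded} nature of $P^\bu$: the naive degreewise construction of a null-homotopy produces, at each stage, a choice that must be compatible with infinitely many others going off to $-\infty$, and there is a genuine $\varprojlim^1$-type obstruction. This is exactly the subtlety that Neeman and St\!'ov\'\i\v cek resolved — in Neeman's treatment via a careful countable filtration argument, in St\!'ov\'\i\v cek's via the Hill lemma ensuring the filtration of $P^\bu$ (as a complex) by bounded-below complexes of finitely presented modules is "nice" enough that Eklof-type vanishing (Lemma~\ref{eklof-lemma}) applies to kill $\Ext^1_{\bC_{\text{pure}}}$. So the real content of the proof is the combination of Proposition~\ref{stovicek-countable-hill-prop} (in its pure-exact-structure incarnation) with the Eklof lemma: filter $P^\bu$ by bounded-below complexes $P^\bu_\beta$ of finitely presented modules; for each such bounded-below complex, $\Hom_{\bH(\Modr R)}(P^\bu_\beta, X^\bu) = 0$ by the easy bounded induction above; then the Eklof lemma for the pure exact structure on $\bC(\Modr R)$ propagates the vanishing of $\Ext^1_{\bC_{\text{pure}}}(-, X^\bu[{-1}])$ along the filtration to conclude $\Hom_{\bH(\Modr R)}(P^\bu, X^\bu) = 0$.
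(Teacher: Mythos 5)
Your reduction scheme (pass to terms filtered by finitely presented modules, filter the whole complex by bounded-below complexes of finitely presented modules via Proposition~\ref{stovicek-countable-hill-prop}, and propagate the vanishing of $\Ext^1$ in the pure exact structure on $\bC(\Modr R)$ by the Eklof lemma) is sound as far as it goes, but it only reduces the theorem to the case of a bounded-below complex $S^\bu$ of finitely presented modules, and that is exactly where your argument has a genuine gap. The ``easy degreewise induction'' constructs the homotopy $h^n$ satisfying $f^n=dh^n+h^{n+1}d$ by \emph{descending} induction: $h^n$ can only be chosen after $h^{n+1}$ is known, so the induction needs a starting point in high degrees. It therefore works for complexes vanishing in degrees $n\gg0$, whereas the Hill-lemma filtration produces complexes vanishing in degrees $n\ll0$ and unbounded in the positive direction --- precisely the case where the induction cannot be started (your remark that the obstruction ``goes off to $-\infty$'' has the direction backwards). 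The difficulty is real, not an artifact: over $R=k[x]/(x^2)$, the evident map from the bounded-below complex $0\rarrow R\overset{x}\rarrow R\overset{x}\rarrow\dotsb$ of finitely presented modules to the doubly unbounded acyclic complex $\dotsb\rarrow R\overset{x}\rarrow R\rarrow\dotsb$ is not null-homotopic (a homotopy would force $1=(a_n+a_{n+1})x$), so any correct argument must use pure acyclicity of the target in an essential, non-formal way, which the degreewise lifting does not. In short, you have reduced the theorem to a special case that still carries all of its content and then dismissed that case; your alternative routes (quoting the pure model structure, or writing $P^\bu$ as a summand of a coproduct of bounded-above complexes of finitely presented modules) are respectively circular and unavailable for unbounded complexes.

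For comparison, the paper does not reprove this statement: it cites \cite[Theorem~5.4]{Sto}, which rests on \cite[Theorem~8.6]{Neem}. The genuine proof passes through the equivalence of Proposition~\ref{pure-exact-structure-prop}(c) between $\Modr R$ with its pure exact structure and the category of flat modules over the small preadditive category $\cS$ of finitely presented $R$\+modules; under this equivalence pure-projectives become projective $\cS$\+modules and pure acyclic complexes become acyclic complexes of flat $\cS$\+modules with flat cocycles, and the desired vanishing of $\Hom_{\bH}$ is Neeman's orthogonality theorem for complexes of projectives against such complexes of flats --- a substantial result whose proof is not a formal induction. This is also why the paper, in the proof of Theorem~\ref{theorem-0a-detailed-in-two-parts}(b), invokes the present theorem (rather than a direct lifting argument) precisely for the bounded-below complexes $S^\bu$ produced by the Hill-lemma filtration.
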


\begin{proof}
 This was first stated in~\cite[Theorem~5.4]{Sto} based
on~\cite[Theorem~8.6]{Neem}.
 We refer to the paper~\cite[Theorem~1.1]{BCE} for a generalization,
and to~\cite[Section~0.2 and proof of Theorem~4.3]{BHP} for
a discussion with some details.
\end{proof}

 Now let $\sS$ be a class of finitely presented $R$\+modules closed
under finite direct sums and containing the free $R$\+module~$R$.
 Put $\sC=\varinjlim\sS\subset\Modr R$.

\begin{lem} \label{restricted-pure-exact-structure}
 The full subcategory\/ $\sC=\varinjlim\sS$ is closed under pure
extensions (as well as pure submodules and pure epimorphic images)
in\/ $\Modr R$.
 In the exact category structure on\/ $\sC$ inherited from the pure
exact structure on\/ $\Modr R$, a short sequence\/ $0\rarrow K\rarrow L
\rarrow M\rarrow0$ is exact if and only if the short sequence of
abelian groups\/ $0\rarrow\Hom_R(S,K)\rarrow\Hom_R(S,L)\rarrow
\Hom_R(S,M)\rarrow0$ is exact for every module $S\in\sS$.
\end{lem}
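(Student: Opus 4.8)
I would prove the two assertions of the lemma separately. For the first --- that the pure exact structure on $\Modr R$ restricts to a well-defined exact structure on $\sC$ --- I would begin by recalling the classical fact that the direct limit closure $\sC=\varinjlim\sS$ of a class of finitely presented modules is closed under pure submodules and pure epimorphic images in $\Modr R$, and simply cite it (\cite[Proposition~2.1]{Len}, \cite[Section~4.1]{CB}, \cite[Theorem~2.3]{AT}, or~\cite[Theorem~8.40]{GT}). The one new point is closure under pure extensions. Given a pure short exact sequence $0\rarrow K\rarrow L\rarrow M\rarrow0$ with $K$, $M\in\sC$, I would present $M=\varinjlim_{i\in I}M_i$ as a direct limit of modules $M_i\in\sS$ over a directed poset and pull the sequence back along each canonical morphism $M_i\rarrow M$. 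Since pullbacks of pure exact sequences are pure exact and a finitely presented module is pure-projective, each pulled-back sequence $0\rarrow K\rarrow L_i\rarrow M_i\rarrow0$ splits, so $L_i\simeq K\oplus M_i$. As direct limits are exact in $\Modr R$, they commute with the pullback construction $L_i=L\times_M M_i$, whence $L\simeq\varinjlim_{i\in I}L_i$; each $L_i$ lies in $\sC$ (which contains $K$ and all $M_i$ and is closed under finite direct sums since $\sS$ is), and $\sC$ is closed under direct limits \cite{Len,CB,Kra}, so $L\in\sC$. Together with the two cited closure properties, this shows the pure exact structure on $\Modr R$ restricts to an exact structure on~$\sC$.

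For the second assertion, the ``only if'' direction is immediate: a short exact sequence in the exact category $\sC$ is, by definition, a pure short exact sequence of $\Modr R$ with terms in $\sC$, and such a sequence stays exact under $\Hom_R(S,{-})$ for every finitely presented module~$S$ \cite[Lemma~2.19]{GT}, in particular for every $S\in\sS$. For the ``if'' direction, assume $\Hom_R(S,{-})$ carries $0\rarrow K\rarrow L\rarrow M\rarrow0$ to an exact sequence for each $S\in\sS$. Using $R\in\sS$ and $\Hom_R(R,{-})\simeq\Id$, the sequence is exact in $\Modr R$, and it remains to establish purity. For this I would verify the lifting criterion: given a finitely presented module $F$ and a morphism $f\:F\rarrow M$, factor $f$ as $F\rarrow M_i\rarrow M$ through some $M_i\in\sS$ (possible since $F$ is finitely presented and $M=\varinjlim M_i$), lift the morphism $M_i\rarrow M$ to a morphism $M_i\rarrow L$ (using that $\Hom_R(M_i,L)\rarrow\Hom_R(M_i,M)$ is surjective, as $M_i\in\sS$), and compose. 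Thus every morphism to $M$ from a finitely presented module lifts to~$L$, the sequence is pure, and since its terms lie in $\sC$ it is exact in the inherited structure.

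The point that calls for genuine care is the ``if'' direction above: the fact that it suffices to test $\Hom$\+exactness against the objects of $\sS$ rather than against all finitely presented modules relies essentially on $M$ belonging to $\sC$, and this --- together with checking that all three closure properties hold, so that the inherited structure is a genuine exact structure --- is the substantive content. The remaining ingredients (stability of purity under pullback, pure-projectivity of finitely presented modules, exactness of direct limits, and closure of $\sC$ under finite direct sums and direct limits) are routine, and in particular I would not attempt to reprove the classical pure-submodule closure from scratch.
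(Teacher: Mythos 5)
Your proof is correct and follows essentially the same route as the paper: the first assertion is handled by citing the classical closure properties of $\varinjlim\sS$ (the paper cites Lenzing's Proposition~2.2 for all three at once, whereas you spell out the pure-extension case by a pullback-and-splitting argument, which is just the standard proof of that citation), and your ``if'' direction for the second assertion is exactly the paper's key observation that any morphism from a finitely presented module into $M\in\varinjlim\sS$ factorizes through some object of $\sS$, so lifting against $\sS$ suffices. Your extra remark that exactness of the underlying sequence already follows from $R\in\sS$ is a small point the paper leaves implicit.
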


\begin{proof}
 The first assertion is the result of~\cite[Proposition~2.2]{Len}.
 The second assertion claims that a short sequence $0\rarrow K
\rarrow L\rarrow M\rarrow0$ with $K$, $L$, $M\in\sC$ is pure exact
in $\Modr R$ if and only if the functor $\Hom_R(S,{-})$ takes it
to a short exact sequence for every $S\in\sS$.
 The point is that any morphism $T\rarrow M$ from a finitely
presented $R$\+module $T$ into the module $M\in\sC=\varinjlim\sS$
factorizes through some module $S\in\sS$.
 So if every morphism $S\rarrow M$ lifts to a morphism $S\rarrow L$,
then also every morphism $T\rarrow M$ lifts to a morphism $T\rarrow L$.
\end{proof}

 Now we can formulate and prove the main results of the section
(though we will need yet another lemma in between).

\begin{thm} \label{theorem-0a-detailed-in-two-parts}
 Let $R$ be a ring and\/ $\sS$ be a class of finitely presented
$R$\+modules, containing the free $R$\+module $R$ and closed under
extensions and kernels of epimorphisms.
 Put\/ $\sB=\sS^{\perp_1}$, \ $\sA={}^{\perp_1}\sB$, \
$\sC=\varinjlim\sS$, and\/ $\sD=\sC^{\perp_1}\subset\Modr R$.
 Then \par
\textup{(a)} in any acyclic complex of modules from\/ $\sA$ with
the modules of cocycles belonging to\/ $\sC$, the modules of
cocycles actually belong to\/~$\sA$; \par
\textup{(b)} let $A^\bu$ be a complex in\/ $\Modr R$ whose terms
belong to\/ $\sA$, and let $X^\bu$ be an acyclic complex in\/
$\Modr R$ whose terms belong to\/ $\sB\cap\sC$ and the modules of
cocycles also belong to\/~$\sB\cap\sC$.
 Then any morphism of complexes of modules $A^\bu\rarrow X^\bu$ is
homotopic to zero.
\end{thm}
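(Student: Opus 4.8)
\emph{Strategy.} The plan is to prove part~(b) first and then deduce part~(a) from it by means of Proposition~\ref{chop-or-splice-prop} together with the cotorsion pair $(\sA,\sB)$ restricted to the exact subcategory~$\sC$. First, a remark on the hypotheses: for $S\in\sS$ the kernel of any epimorphism $R^n\rarrow S$ again lies in $\sS$, hence is finitely presented, and iterating shows that every module in $\sS$ is strongly finitely presented. Consequently $\sC=\varinjlim\sS$ is closed under extensions in $\Modr R$, so $\sA=\Fil(\sS)^\oplus\subset\sC$ by Theorem~\ref{eklof-trlifaj-theorem} (applicable since $R\in\sS$ makes $\Fil(\sS)$ generating), and by Proposition~\ref{varinjlim-kernel-closed} the class $\sC$ is also closed under kernels of epimorphisms. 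Moreover $(\sA,\sB)$ is complete and, since $\sS$ is closed under syzygies, hereditary; in particular $\sB=\sS^{\perp_1}=\sS^{\perp_{\ge1}}$. By Lemma~\ref{restricted-cotorsion-pair} it restricts to a complete hereditary cotorsion pair $(\sA,\sB\cap\sC)$ on $\sC$ with the exact structure inherited from~$\Modr R$.

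\emph{Proof of~(b).} I would first reduce to the case where every term $A^n$ lies in $\Fil(\sS)$, not merely in~$\sA$. Writing $A^n\oplus A'^n\cong G^n$ with $G^n\in\Fil(\sS)$ and putting $E^n:=(G^n)^{(\mathbb N)}\in\Fil(\sS)$, the Eilenberg swindle gives $A^n\oplus E^n\cong E^n$; let $C^\bu$ be the contractible complex given by the direct sum over $n\in\boZ$ of the complexes having $E^n$ in degrees $n$ and $n+1$ with identity differential, so $C^m=E^m\oplus E^{m-1}$. Then $A^\bu\oplus C^\bu$ has every term isomorphic to $E^m\oplus E^{m-1}\in\Fil(\sS)$, and $A^\bu$ is a direct summand of it in $\bC(\Modr R)$; since $\Hom_{\bH(\Modr R)}(-,X^\bu)$ is additive, it suffices to treat $A^\bu\oplus C^\bu$. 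Next, $X^\bu$ is \emph{pure acyclic}: in each short exact sequence $0\rarrow Z^i\rarrow X^i\rarrow Z^{i+1}\rarrow0$ of cocycles one has $Z^i\in\sB=\sS^{\perp_1}$, so it stays exact under $\Hom_R(S,{-})$ for every $S\in\sS$, and by Lemma~\ref{restricted-pure-exact-structure} (which also gives that every homomorphism from a finitely presented module into $Z^{i+1}\in\varinjlim\sS$ factors through some object of~$\sS$) this makes the sequence pure. Now for any bounded below complex $P^\bu$ of objects of~$\sS$, we have $\Ext^1_R(P^n,X^{n-1})=0$ (as $P^n\in\sS$ and $X^{n-1}\in\sB$), so Lemma~\ref{ext-homotopy-hom-lemma} identifies $\Ext^1_{\bC(\Modr R)}(P^\bu,X^\bu[-1])$ with $\Hom_{\bH(\Modr R)}(P^\bu,X^\bu)$, which vanishes by Theorem~\ref{neeman-stovicek-pure-projective-pure-acyclic} since $P^\bu$ is a complex of pure-projective modules and $X^\bu$ is pure acyclic. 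By Proposition~\ref{stovicek-countable-hill-prop} the complex $A^\bu\oplus C^\bu$ is filtered in $\bC(\Modr R)$ by bounded below complexes of objects of~$\sS$, so the Eklof lemma (Lemma~\ref{eklof-lemma}, in the Grothendieck category $\bC(\Modr R)$) gives $\Ext^1_{\bC(\Modr R)}(A^\bu\oplus C^\bu,\,X^\bu[-1])=0$. Finally, since $\Ext^1_R((A^\bu\oplus C^\bu)^n,X^{n-1})=0$ (the left-hand module is in $\sA$, the right-hand one in $\sB$), Lemma~\ref{ext-homotopy-hom-lemma} again identifies this $\Ext^1$ with $\Hom_{\bH(\Modr R)}(A^\bu\oplus C^\bu,\,X^\bu)$; hence the latter is zero, and passing to the direct summand $A^\bu$ proves~(b).

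\emph{Deduction of~(a).} By the implication (2)\,$\Rightarrow$\,(1) of Proposition~\ref{chop-or-splice-prop} (with $\sL=\sL'=\sA$ and $\sM=\sC$: the classes $\sA$ and $\sC$ are closed under countable coproducts, $\sA$ under direct summands, and countable coproducts are exact in $\Modr R$), part~(a) reduces to showing that any module $M$ fitting into a short exact sequence~\eqref{periodicity-sequence} with $L\in\sA$ and $M\in\sC$ lies in~$\sA$. Let $A^\bu$ be the acyclic complex obtained by splicing this sequence, so $A^n=L$ and every cocycle of $A^\bu$ equals~$M$. Since $(\sA,\sB\cap\sC)$ is a cotorsion pair on~$\sC$, it suffices to check $\Ext^1_R(M,B)=0$ for each $B\in\sB\cap\sC$. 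As $(\sA,\sB)$ is hereditary, $\Ext^{\ge1}_R(L,B)=0$, and dimension shifting along $A^\bu$ identifies $\Ext^1_R(M,B)$ with a homotopy group $\Hom_{\bH(\Modr R)}(A^\bu,B[\,\cdot\,])$. To see that this vanishes, one realizes $B$ as a cocycle of a (bounded below) pure acyclic complex all of whose terms lie in~$\sB$, and relates $\Hom_{\bH(\Modr R)}(A^\bu,B[\,\cdot\,])$ to the homotopy groups of $A^\bu$ against this complex and its brutal truncations; those vanish by the argument for~(b), which applies verbatim to any pure acyclic complex with terms in~$\sB$. This gives $M\in\sA$, hence~(a).

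\emph{The main obstacle.} Apart from the dimension-shifting step, everything above is a routine assembly of the lemmas quoted; the real content is concentrated in the last step of the deduction of~(a) --- the construction, for a given $B\in\sB\cap\sC$, of a pure acyclic complex with terms in~$\sB$ through which to compute $\Ext^1_R(M,B)$, together with the comparison of homotopy groups. This is what the lemma that I would interpolate before the proof of Theorem~\ref{theorem-0a-detailed-in-two-parts} supplies, and it is there that the syzygy-closedness of $\sS$ (equivalently, $\sB$ being closed under cosyzygies, i.e. $\sS^{\perp_1}=\sS^{\perp_{\ge1}}$) and the fact that $\sC=\varinjlim\sS$ is a definable subcategory --- closed under pure quotients and pure-injective envelopes --- are used in an essential way.
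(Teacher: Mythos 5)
Your proof of part~(b) is correct and is essentially the paper's own argument: Eklof--Trlifaj to reduce to terms in $\Fil(\sS)$, Proposition~\ref{stovicek-countable-hill-prop} plus the Eklof lemma in $\bC(\Modr R)$ to reduce to complexes with terms in $\sS$, Lemma~\ref{B-is-absolutely-pure-in-C}/\ref{restricted-pure-exact-structure} to see that $X^\bu$ is pure acyclic, and Theorem~\ref{neeman-stovicek-pure-projective-pure-acyclic} to finish. The Eilenberg-swindle elaboration of the ``without loss of generality'' step is fine.

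Part~(a), however, has a genuine gap, and you have in effect flagged it yourself: the ``lemma you would interpolate'' is where the entire content of~(a) lives, and the route you sketch for it does not work. Given $B\in\sB\cap\sC$, you propose to coresolve $B$ by a bounded below pure acyclic complex $0\to B\to Y^0\to Y^1\to\dotsb$ with terms merely in $\sB$ (or $\sB\cap\sC$), and to kill $\Hom_{\bH(\Modr R)}(A^\bu,B[{\cdot}])$ by comparing with this complex and its brutal truncations, ``to which the argument for~(b) applies verbatim.'' But part~(b) applies only to \emph{acyclic} complexes, and the brutal truncation $Y^\bu=(Y^0\to Y^1\to\dotsb)$ is not acyclic (its zeroth cohomology is $B$); so~(b) gives you nothing about $\Hom_{\bH}(A^\bu,Y^\bu[n])$, and for arbitrary $Y^i\in\sB\cap\sC$ these groups have no reason to vanish. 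What is actually needed -- and what the paper uses -- is that the $Y^i$ be \emph{injective objects of the exact category} $\sC$, i.e.\ lie in $\sC\cap\sD$ where $\sD=\sC^{\perp_1}$: then $\Hom_R(A^\bu,Y^\bu)$ is acyclic by the standard fact that Hom from a complex acyclic in $\sC$ into a bounded below complex of injectives of $\sC$ is acyclic, and this, combined with~(b) applied to $(B\to Y^\bu)$, forces $\Hom_R(A^\bu,B)$ to be acyclic. The existence of such a coresolution rests on the completeness of the cotorsion pair $(\sC,\sD)$ (the Angeleri H\"ugel--Trlifaj theorem, \cite[Theorem~2.3 and Corollary~2.4]{AT}, applicable because your hypotheses force $\sS$ to consist of $\FP_\infty$\+modules) together with Lemmas~\ref{kernel-is-injectives-in-left-class} and~\ref{restricted-cotorsion-pair}, and on the hereditariness of $(\sC,\sD)$ to keep the cocycles of $Y^\bu$ inside $\sB\cap\sC$. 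None of this appears in your outline -- you never invoke $\sD$ or the completeness of $(\sC,\sD)$ -- so the deduction of~(a) is not yet a proof. (Your preliminary reduction of the complex form of~(a) to the periodicity form via Proposition~\ref{chop-or-splice-prop}\,(2)$\Rightarrow$(1) is legitimate but buys nothing: the paper proves the complex form directly and obtains the periodicity form as a corollary via the opposite implication.)
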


 The following lemma tells that modules from the class $\sB\cap\sC$
are ``absolutely pure within the exact category~$\sC$''.

\begin{lem} \label{B-is-absolutely-pure-in-C}
 In the notation of Theorem~0 or
Theorem~\ref{theorem-0a-detailed-in-two-parts}, let\/
$0\rarrow B\rarrow L\rarrow C\rarrow0$ be a short exact sequence
in\/ $\Modr R$ with the terms $B$, $L$, $C\in\sC$.
 Assume that the module $B$ belongs to the class\/ $\sB\cap\sC$.
 Then the short exact sequence\/ $0\rarrow B\rarrow L\rarrow C
\rarrow0$ is pure in\/ $\Modr R$.
\end{lem}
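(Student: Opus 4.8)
The plan is to reduce the purity assertion, via Lemma~\ref{restricted-pure-exact-structure}, to a lifting property for morphisms out of the objects of\/ $\sS$, and then to dispatch that lifting property by a single $\Ext^1$\+vanishing coming from the hypothesis $B\in\sB$.

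First I would note that all three modules $B$, $L$, $C$ lie in $\sC=\varinjlim\sS$, so the characterization of purity within $\sC$ from Lemma~\ref{restricted-pure-exact-structure} is available: the short exact sequence $0\rarrow B\rarrow L\rarrow C\rarrow0$ is pure in $\Modr R$ if and only if the induced sequence of abelian groups $0\rarrow\Hom_R(S,B)\rarrow\Hom_R(S,L)\rarrow\Hom_R(S,C)\rarrow0$ is exact for every module $S\in\sS$. (Here $\sS$ is closed under finite direct sums and contains the free module $R$, since it is closed under extensions and kernels of epimorphisms, so the hypotheses of Lemma~\ref{restricted-pure-exact-structure} are met in the present notation.)

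Next, fix $S\in\sS$. Left exactness of $\Hom_R(S,{-})$ takes care of exactness at the first two spots, so the only thing to check is that $\Hom_R(S,L)\rarrow\Hom_R(S,C)$ is surjective; equivalently, that every morphism $S\rarrow C$ lifts along $L\rarrow C$. The obstruction to such a lift lies in $\Ext^1_R(S,B)$, by the long exact sequence of $\Ext^*_R(S,{-})$ attached to $0\rarrow B\rarrow L\rarrow C\rarrow0$. But $B\in\sB\cap\sC\subseteq\sB=\sS^{\perp_1}$ and $S\in\sS$, so $\Ext^1_R(S,B)=0$ and the lift exists. Hence the $\Hom_R(S,{-})$\+sequence is short exact for every $S\in\sS$, and by Lemma~\ref{restricted-pure-exact-structure} the original sequence is pure in $\Modr R$.

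I do not anticipate any real obstacle here: the whole argument rests on the already-established Lemma~\ref{restricted-pure-exact-structure} together with the definition $\sB=\sS^{\perp_1}$. The only point requiring (minor) care is checking that the hypotheses of Lemma~\ref{restricted-pure-exact-structure} are satisfied, which is immediate.
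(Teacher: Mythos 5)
Your argument is correct and is essentially identical to the paper's proof: both reduce to the lifting criterion of Lemma~\ref{restricted-pure-exact-structure} and then observe that the obstruction in $\Ext^1_R(S,B)$ vanishes because $B\in\sB=\sS^{\perp_1}$. The only difference is that you spell out the long exact sequence and the verification of the hypotheses of Lemma~\ref{restricted-pure-exact-structure}, which the paper leaves implicit.
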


\begin{proof}
 It is only important that $C\in\sC$ and $B\in\sB$.
 By Lemma~\ref{restricted-pure-exact-structure}, it suffices to
check that any morphism $S\rarrow C$ with $S\in\sS$ lifts to
a morphism $S\rarrow L$.
 This holds because $B\in\sB=\sS^{\perp_1}\subset\Modr R$.
\end{proof}

\begin{proof}[Proof of
Theorem~\ref{theorem-0a-detailed-in-two-parts}(b)]
 The argument follows the ideas of the proof of~\cite[Theorem~4.2]{BHP},
with suitable modifications.
 By the Eklof--Trlifaj theorem (Theorem~\ref{eklof-trlifaj-theorem}(b)),
we have $\sA=\Fil(\sS)^\oplus$.
 Without loss of generality we can assume that the terms of the complex
$A^\bu$ belong to $\Fil(\sS)$.
 Then, by Proposition~\ref{stovicek-countable-hill-prop} (applied in
the case of the module category $\sK=\Modr R$), the complex $A^\bu$ is 
filtered by (bounded below) complexes with the terms belonging to~$\sS$.

 By Lemma~\ref{ext-homotopy-hom-lemma}, for any complex $A^\bu$ with
the terms in $\sA$ and any complex $B^\bu$ with the terms in $\sB$
we have an isomorphism of abelian groups
$$
 \Ext^1_{\bC(\Modr R)}(A^\bu,B^\bu[-1])\simeq
 \Hom_{\bH(\Modr R)}(A^\bu,B^\bu).
$$
 So, instead of showing that $\Hom_{\bH(\Modr R)}(A^\bu,X^\bu)=0$
as desired in the theorem, it suffices to prove that
$\Ext^1_{\bC(\Modr R)}(A^\bu,X^\bu[-1])=0$.
 In view of the Eklof lemma (Lemma~\ref{eklof-lemma}) applied in
the abelian category $\sK=\bC(\Modr R)$, the question reduces to
showing that $\Ext^1_{\bC(\Modr R)}(S^\bu,X^\bu[-1])=0$ for
any complex $S^\bu$ with the terms belonging to $\sS$ and any
complex $X^\bu$ as in the theorem.
 Using Lemma~\ref{ext-homotopy-hom-lemma} again, we conclude that it
suffices to show that any morphism of complexes $S^\bu\rarrow X^\bu$
is homotopic to zero.

 Finally, we observe that all finitely presented $R$\+modules are
pure-projective (by the definitions), while any acyclic complex of
modules with the modules of cocycles in $\sB\cap\sC$ is pure acyclic
(by Lemma~\ref{B-is-absolutely-pure-in-C}).
 Thus any morphism of complexes $S^\bu\rarrow X^\bu$ is homotopic to
zero by the Neeman--\v St\!'ov\'\i\v cek theorem
(Theorem~\ref{neeman-stovicek-pure-projective-pure-acyclic}).
\end{proof}

\begin{proof}[Proof of
Theorem~\ref{theorem-0a-detailed-in-two-parts}(a)]
 It is clear that in the assumptions of the theorem all the modules
from $\sS$ have to be strongly finitely presented ($\FP_\infty$).
 Thus~\cite[Theorem~2.3 and Corollary~2.4]{AT}
or~\cite[Theorem~8.40, Corollary~8.42, and Theorem~6.19]{GT} are
applicable, telling that $(\sC,\sD)$ is a complete cotorsion
pair in $\Modr R$.
 The cotorsion pair $(\sA,\sB)$ is complete in $\Modr R$ by
the Eklof--Trlifaj theorem (Theorem~\ref{eklof-trlifaj-theorem}(a)).

 Both the cotorsion pairs $(\sA,\sB)$ and $(\sC,\sD)$ are hereditary,
as it was explained in the proof of Theorem~0(b) in
Section~\ref{generalized-cotorsion-periodicity-secn}.
 Applying Lemma~\ref{restricted-cotorsion-pair} to the abelian
category $\sK=\Modr R$ and the full subcategory $\sE=\sC$, we conclude
that ($\sA$, $\sB\cap\sC$) is a hereditary complete cotorsion pair
in the exact category~$\sC$.
 Lemma~\ref{kernel-is-injectives-in-left-class} tells that there are
enough injective objects in the exact category $\sC$, and the class
of such injective objects coincides with the intersection $\sC\cap\sD$.

 Given two complexes of right $R$\+modules $A^\bu$ and $B^\bu$, we
denote by $\Hom_R(A^\bu,B^\bu)$ the direct product totalization of
the bicomplex of Hom groups $\Hom_R(A^i,B^j)$.
 In particular, this notation applies if $B^\bu=B$ is just a single
$R$\+module, which is then considered as a complex of $R$\+modules
concentrated in the cohomological degree~$0$.

 Let $A^\bu$ be an acyclic complex of modules from~$\sA$.
 Then one can easily see that the modules of cocycles of $A^\bu$
belong to $\sA$ if and only if the complex of abelian groups
$\Hom_R(A^\bu,B)$ is acyclic for any module $B\in\sB$.
 This holds because $(\sA,\sB)$ is a cotorsion pair in $\Modr R$
(so $\sA={}^{\perp_1}\sB$).
 We will use a version of this observation made within the exact
category~$\sC$.

 So let $A^\bu$ be an acyclic complex of modules from $\sA$ with
the modules of cocycles belonging to~$\sC$.
 Then we observe that the modules of cocycles of $A^\bu$ belong
to $\sA$ if and only if the complex $\Hom_R(A^\bu,B)$ is acyclic for
any module $B\in\sB\cap\sC$.
 This holds because ($\sA$, $\sB\cap\sC$) is a cotorsion pair in $\sC$,
so $\sA=\sC\cap{}^{\perp_1}(\sB\cap\sC)$.

 Now let $D^\bu$ be an injective resolution of the object $B$ in
the exact category~$\sC$.
 So we have $B\in\sB\cap\sC$ by assumption, and $0\rarrow B\rarrow
D^0\rarrow D^1\rarrow D^2\rarrow\dotsb$ is an acyclic complex in
$\Modr R$ with the modules $D^n\in\sC\cap\sD$ and the modules of
cocycles belonging to~$\sC$.
 We observe that the modules of cocycles of the complex $D^\bu$ actually
belong to $\sB\cap\sC$, because $\sD\subset\sB$ and the class $\sB$
is closed under cokernels of monomorphisms.
 Essentially, this is a restatement of the claim that the cotorsion
pair $(\sA,\sB)$ is hereditary in $\Modr R$, or more specifically,
that the cotorsion pair ($\sA$, $\sB\cap\sC$) is hereditary in~$\sC$.

 Denote by $X^\bu$ the acyclic complex $(B\to D^\bu)$.
 Then the complex of abelian groups $\Hom_R(A^\bu,X^\bu)$ is acyclic
by Theorem~\ref{theorem-0a-detailed-in-two-parts}(b) (which we have
proved above).
 This holds because $A^\bu$ is a complex with the terms in $\sA$,
while $X^\bu$ is an acyclic complex with the terms in $\sB\cap\sC$
and the modules of cocycles in $\sB\cap\sC$.

 On the other hand, the complex of abelian groups $\Hom_R(A^\bu,D^\bu)$
is acyclic as well.
 This holds quite generally for any acyclic complex $A^\bu$ and
any bounded below complex of injective objects $D^\bu$ in any exact
category~$\sC$.
 Notice that in the situation at hand the complex of modules $A^\bu$
is acyclic in the exact category $\sC$, as its modules of cocycles
belong to $\sC$ by assumption.

 Since both the complexes $\Hom_R(A^\bu,X^\bu)$ and
$\Hom_R(A^\bu,D^\bu)$ are acyclic, and the complex $X^\bu$ has
the form $X^\bu=(B\to D^\bu)$, we can finally conclude that
the complex of abelian groups $\Hom_R(A^\bu,B)$ is acyclic.
\end{proof}

\begin{cor} \label{theorem-0a-C-A-periodicity-cor}
 Let $R$ be a ring and\/ $\sS$ be a class of finitely presented
$R$\+modules, containing the free $R$\+module $R$ and closed under
extensions and the kernels of epimorphisms.
 Put\/ $\sA={}^{\perp_1}(\sS^{\perp_1})$ and\/ $\sC=\varinjlim\sS$.
 Then, for any short exact sequence~\eqref{periodicity-sequence}
as in Section~\ref{introd-seven-items} with modules $L\in\sA$
and $M\in\sC$, one has $M\in\sA$.
 In other words, any\/ $\sA$\+periodic module belonging to\/ $\sC$
actually belongs to\/~$\sA$.
\end{cor}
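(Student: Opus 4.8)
The plan is to read off the corollary from part~(a) of Theorem~\ref{theorem-0a-detailed-in-two-parts}, which is already the ``acyclic complex'' incarnation of the very same periodicity phenomenon, by feeding it into the unconditional splicing implication of Proposition~\ref{chop-or-splice-prop}. Concretely, I would apply Proposition~\ref{chop-or-splice-prop} to the abelian category $\sK=\Modr R$ with the three nested classes $\sL=\sL'=\sA$ and $\sM=\sC$: property~(1) of that proposition for this choice of classes is \emph{verbatim} the assertion of Theorem~\ref{theorem-0a-detailed-in-two-parts}(a), so the implication (1)\,$\Longrightarrow$\,(2) gives exactly property~(2), namely that any $\sA$\+periodic module in $\sC$ lies in~$\sA$. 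Note that only the direction (1)\,$\Longrightarrow$\,(2) is used, so none of the closure-under-coproducts or closure-under-direct-summands hypotheses of Proposition~\ref{chop-or-splice-prop} are needed here.

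The one small point requiring attention is the inclusion $\sA\subset\sC$, needed so that the chain $\sL\subset\sL'\subset\sM$ of Proposition~\ref{chop-or-splice-prop} is legitimate. As is observed in the proof of Theorem~\ref{theorem-0a-detailed-in-two-parts}(a), the hypotheses on $\sS$ force every $S\in\sS$ to be strongly finitely presented ($\FP_\infty$): since $\sS\ni R$ and $\sS$ is closed under extensions, it contains every finitely generated free module $R^n$; choosing an epimorphism $R^n\twoheadrightarrow S$, its kernel $K$ is finitely generated and lies in $\sS$ by closure under kernels of epimorphisms, hence is finitely presented by the standing hypothesis on $\sS$, so $S$ is $\FP_2$; iterating this with $K$ in place of $S$ shows $S$ is $\FP_n$ for all~$n$. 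Consequently, by \cite[Theorem~2.3]{AT} (or \cite[Theorem~8.40]{GT}) the class $\sC=\varinjlim\sS$ is closed under extensions in $\Modr R$; being also closed under direct limits it is closed under transfinitely iterated extensions, so $\Fil(\sS)\subset\sC$, and since $\sC$ is closed under direct summands (a retract of $C\in\sC$ is the direct limit of the constant idempotent diagram on $C$) we obtain $\sA=\Fil(\sS)^\oplus\subset\sC$, using the Eklof--Trlifaj description of $\sA$ from Theorem~\ref{eklof-trlifaj-theorem}(b) (applicable because $\Fil(\sS)\supset\Fil(\{R\})$ contains all free modules and is therefore generating).

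With $\sA\subset\sC$ in hand, the argument closes as indicated above; unwinding the proof of Proposition~\ref{chop-or-splice-prop}, this just says that one splices the given short exact sequence $0\rarrow M\rarrow L\rarrow M\rarrow0$ into the doubly unbounded acyclic complex $\dotsb\rarrow L\rarrow L\rarrow L\rarrow\dotsb$ with differential $L\twoheadrightarrow M\hookrightarrow L$, all of whose terms lie in $\sA$ and all of whose cocycle objects coincide with $M\in\sC$, and then applies Theorem~\ref{theorem-0a-detailed-in-two-parts}(a) to conclude $M\in\sA$. There is no genuine obstacle: the real work is already done inside Theorem~\ref{theorem-0a-detailed-in-two-parts}(a), and the only bookkeeping step is the $\FP_\infty$ observation that secures $\sA\subset\sC$.
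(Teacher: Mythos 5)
Your proof is correct and is essentially identical to the paper's: the corollary is deduced from Theorem~\ref{theorem-0a-detailed-in-two-parts}(a) via Proposition~\ref{chop-or-splice-prop}\,(1)\,$\Rightarrow$\,(2) with $\sL=\sL'=\sA$ and $\sM=\sC$. Your additional verification that $\sA\subset\sC$ (via the $\FP_\infty$ observation and the Eklof--Trlifaj description of~$\sA$) is the same justification the paper records in the proofs of Theorem~0(b) and Theorem~\ref{theorem-0a-detailed-in-two-parts}(a).
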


\begin{proof}
 Follows from Theorem~\ref{theorem-0a-detailed-in-two-parts}(a) by
Proposition~\ref{chop-or-splice-prop}\,(1)\,$\Rightarrow$\,(2) applied
to the category $\sK=\Modr R$ and the classes of objects $\sL=\sL'=\sA$,
\ $\sM=\sC$.
\end{proof}

\begin{proof}[Proof of Theorem~0(a) from
Section~\ref{introd-theorem-zero}]
 The first assertion of Theorem~0(a) is provided by
Corollary~\ref{theorem-0a-C-A-periodicity-cor}, and the second one
by Theorem~\ref{theorem-0a-detailed-in-two-parts}(a).
\end{proof}

\Section{Direct Limit Closures of Classes of Finitely Presentables}
\label{direct-limit-closures-of-FP-classes}

 The aim of this section is to prove Propositions~A(i) and~B.
 They are restated below as
Propositions~\ref{FP2-varinjlim-extension-closed}
and~\ref{varinjlim-kernel-closed}.

 In this section we work with \emph{locally finitely presentable}
abelian categories.
 We suggest the book~\cite{AR} as a general reference source on
nonadditive locally (finitely) presentable and (finitely) accessible
categories.

 The definitions of a finitely presentable object and a locally finitely
presentable category can be found in~\cite[Definitions~1.1 and~1.9, and
Theorem~1.11]{AR} (it is helpful to keep in mind that in abelian
categories the notions of a generator and a strong generator coincide).
 All locally finitely presentable abelian categories have exact direct
limit functors, so they are Grothendieck~\cite[Proposition~1.59]{AR}.
 The abelian category of modules over an arbitrary ring $\sK=\Modr R$
is an important example of a locally finitely presentable abelian
category.

 Finitely accessible categories~\cite[Definition~2.1 and
Remark~2.2(1)]{AR} form a wider class than the locally finitely
presentable ones.
 The theory of finitely accessible additive categories goes back to
the paper~\cite[Section~2]{Len} (where they were not defined yet).
 Subsequently they were studied in the papers~\cite{CB,Kra} under
the name of ``locally finitely presented'' additive categories.
 All coproducts exist in any finitely accessible additive category.

 We suggest~\cite[Sections~8.1\+-8.2]{PS5} as an additional reference
source on locally finitely presentable abelian categories.
 Our proof of Proposition~A(i)
(Proposition~\ref{FP2-varinjlim-extension-closed} below) is a slight
generalization of the argument in~\cite[Proposition~8.4]{PS5}.

 \emph{Locally finitely generated} categories also form a wider class
than the locally finitely presentable ones.
 We refer to~\cite[Section~1.E]{AR} for a general discussion of
locally generated (nonadditive) categories and
to~\cite[Corollary~9.6]{PS1} for a very general form of the assertion
that any locally finitely generated abelian category is Grothendieck.
 A good reference source on locally finitely generated Grothendieck
categories and finitely generated/finitely presentable objects in
them is~\cite[\S V.3]{Sten}.

 The following definitions are very general.
 Let $\sK$ be a category with direct limits.
 An object $S\in\sK$ is said to be \emph{finitely presentable} if
the functor $\Hom_\sK(S,{-})\:\sK\rarrow\Sets$ preserves direct limits.
 An object $S$ is said to be \emph{finitely generated} if the same
functor preserves the direct limits of diagrams of monomorphisms.

 An abelian category $\sK$ with set-indexed coproducts is said to be
\emph{locally finitely generated} if it has a set of generators
consisting of finitely generated objects.
 In particular, the category $\sK$ is \emph{locally finitely
presentable} if it has a set of generators consisting of finitely
presentable objects.

 Given a finitely accessible additive category $\sK$, we denote
by $\sK_\fp\subset\sK$ the full subcategory of finitely presentable
objects in~$\sK$.
 In any locally finitely presentable abelian category $\sK$,
the full subcategory $\sK_\fp$ is closed under
cokernels~\cite[Proposition~1.3]{AR} and
extensions~\cite[Lemma~8.1]{PS5}.
 Similarly, the full subcategory of finitely generated objects in
a locally finitely generated abelian category $\sK$ is closed under
extensions and quotients~\cite[Lemma~V.3.1 and Proposition~V.3.2]{Sten}.

\begin{prop} \label{varinjlim-varinjlim-closed}
 Let\/ $\sK$ be a finitely accessible additive category and\/
$\sS\subset\sK_\fp$ be a class of finitely presentable objects closed
under finite direct sums.
 Then the class of objects\/ $\varinjlim\sS\subset\sK$ is closed under
coproducts and direct limits in\/~$\sK$.
 An object $L\in\sK$ belongs to\/ $\varinjlim\sS$ if and only if, for
any object $T\in\sK_\fp$, any morphism $T\rarrow L$ in\/ $\sK$
factorizes through an object from\/~$\sS$. 
\end{prop}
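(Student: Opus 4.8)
The plan is to establish the two assertions in turn, starting from the characterization by factorization, which is the conceptual heart of the statement. For the ``if'' direction of the factorization criterion: given $L\in\sK$ such that every morphism $T\to L$ from $T\in\sK_\fp$ factors through an object of $\sS$, I would build the canonical presentation of $L$ as a direct limit indexed by the comma category $(\sK_\fp\downarrow L)$ of all morphisms $T\to L$ with $T$ finitely presentable. Since $\sK$ is locally finitely presentable, this comma category is filtered and the canonical cocone exhibits $L=\varinjlim_{(T\to L)} T$; this is the standard fact that every object is the filtered colimit of the finitely presentable objects mapping to it. Using the factorization hypothesis together with closure of $\sS$ under finite direct sums, I would then refine this diagram: replace each $T\to L$ by a chosen factorization $T\to S_{(T\to L)}\to L$ with $S_{(T\to L)}\in\sS$, and check that the resulting diagram over a suitably cofinal/filtered index category is still filtered and still has colimit $L$, so $L\in\varinjlim\sS$. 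Keeping the index category genuinely \emph{directed} (a poset) rather than merely filtered is a routine but slightly technical point; one passes from a filtered category to a directed poset by a standard cofinality argument, or invokes that $\varinjlim\sS$ taken over filtered categories agrees with the direct-limit closure over directed posets, as already noted in the excerpt's discussion of $\varinjlim$ following \cite{Len,CB,Kra}. The ``only if'' direction is the easy half: if $L=\varinjlim_{x\in X}S_x$ with $X$ directed and $S_x\in\sS$, then for finitely presentable $T$ the functor $\Hom_\sK(T,{-})$ preserves the direct limit, so any $T\to L$ factors through some structure map $S_x\to L$, and $S_x\in\sS$.

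Next I would use the factorization criterion to deduce closure of $\varinjlim\sS$ under coproducts and direct limits. For coproducts: given a family $(L_j)_{j\in J}$ in $\varinjlim\sS$, I want to show $\coprod_j L_j\in\varinjlim\sS$ via the criterion. A morphism $T\to\coprod_j L_j$ from a finitely presentable $T$ factors through a \emph{finite} subcoproduct $\coprod_{j\in J_0}L_j$, because finitely presentable objects are small with respect to coproducts (this uses that $T\in\sK_\fp$ and that in a Grothendieck category a map out of a finitely generated object into a coproduct lands in a finite subsum). Then I compose with factorizations $T\to S\to L_j$ for each $j\in J_0$; bundling these into a single object of $\sS$ requires closure of $\sS$ under finite direct sums, which is assumed. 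Hence the composite $T\to\coprod_j L_j$ factors through an object of $\sS$, so $\coprod_j L_j\in\varinjlim\sS$. For direct limits: a direct limit of a diagram in $\varinjlim\sS$ is a quotient of the coproduct of its terms by a suitable subobject, but it is cleaner to argue directly that $\varinjlim\sS$ is closed under \emph{directed} colimits because a directed colimit of directed colimits is again a directed colimit — the associativity of filtered colimits — so one simply re-indexes; alternatively, apply the factorization criterion again, noting that a map from a finitely presentable $T$ into a directed colimit $\varinjlim_i L_i$ factors through some $L_i$, which lies in $\varinjlim\sS$, and then through an object of $\sS$ by the criterion applied to $L_i$.

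The main obstacle I anticipate is purely bookkeeping rather than conceptual: passing from ``filtered'' to ``directed poset'' index categories cleanly in the ``if'' direction, and verifying that after replacing the objects $T$ by chosen objects $S\in\sS$ the new diagram still computes $L$ in the colimit (one must arrange enough morphisms in the new diagram — between the chosen $S$'s — so that the cocone to $L$ is still universal, which is where closure of $\sS$ under finite direct sums gets used a second time, to amalgamate two factorizations of the same $T\to L$). All of this is handled in \cite[Proposition~8.4]{PS5} in a slightly less general setting, and the adaptation is straightforward: replace ``finitely presentable'' abstractly and keep track of the hypothesis that $\sS$ is closed under finite direct sums at each amalgamation step. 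No new idea beyond the standard theory of locally finitely presentable categories \cite{AR} is required.
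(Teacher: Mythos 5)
Your proposal is correct and reproduces the standard argument from the theory of finitely accessible additive categories; the paper gives no independent proof of this proposition but simply cites Lenzing, Crawley-Boevey, and Krause for exactly this statement, so you are matching its approach. The one imprecision is in the amalgamation step: connecting two factorizations of the same map $T\rarrow L$ requires not only closure of $\sS$ under finite direct sums but also a second application of the factorization hypothesis, namely to the cokernel of the difference of the two lifts $T\rightrightarrows S_1\oplus S_2$ (and similarly for coequalizing parallel arrows when one shows the comma category $(\sS\downarrow L)$ is filtered) --- a detail that is indeed handled in the references you point to.
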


\begin{proof}
 This is~\cite[Proposition~2.1]{Len}, \cite[Section~4.1]{CB},
or~\cite[Proposition~5.11]{Kra}.
\end{proof}

\begin{cor} \label{kernel-of-coproduct-onto-direct-limit-cor}
 Let\/ $\sK$ be a locally finitely presentable abelian category
and\/ $\sS\subset\sK_\fp$ be a class of finitely presentable objects
closed under finite direct sums.
 Let $(H_i)_{i\in I}$ be a direct system of objects $H_i\in
\varinjlim\sS$, indexed by a directed poset~$I$.
 Then the kernel of the natural epimorphism
\begin{equation} \label{coproduct-onto-direct-limit-epimorphism}
 \coprod\nolimits_{i\in I} H_i\lrarrow
 \varinjlim\nolimits_{i\in I}H_i
\end{equation}
belongs to\/ $\varinjlim\sS$.
\end{cor}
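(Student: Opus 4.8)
Write $K$ for the kernel of the epimorphism~\eqref{coproduct-onto-direct-limit-epimorphism}; the goal is to show $K\in\varinjlim\sS$. The plan is to verify the factorization criterion of Proposition~\ref{varinjlim-varinjlim-closed}: it suffices to check that every morphism $f\:T\rarrow K$ out of a finitely presentable object $T\in\sK_\fp$ factors through some object of~$\sS$. Before starting I would record one auxiliary fact, that $\varinjlim\sS$ is closed under direct summands in~$\sK$; this is a one-line consequence of the same criterion (given $L=L_1\oplus L_2$ in $\varinjlim\sS$ and a morphism $T\rarrow L_1$ with $T$ finitely presentable, compose with the injection $L_1\hookrightarrow L$, factor the result through an object of $\sS$, and compose with the projection $L\rarrow L_1$).

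Now let $f\:T\rarrow K$ be given, and compose it with the inclusion $\iota\:K\hookrightarrow\coprod_{i\in I}H_i$. Since $T$ is finitely presentable, hence finitely generated, the morphism $\iota f$ factors through a finite subcoproduct: there are a finite $F_0\subset I$ and a morphism $g\:T\rarrow\bigoplus_{i\in F_0}H_i$ whose composition with the canonical inclusion $\bigoplus_{i\in F_0}H_i\hookrightarrow\coprod_{i\in I}H_i$ is $\iota f$. As $f$ lands in $K$, the composition of $g$ with the natural morphism $\bigoplus_{i\in F_0}H_i\rarrow\varinjlim_{i\in I}H_i$ is zero. For every $j\in I$ above all elements of $F_0$, let $\rho_j\:\bigoplus_{i\in F_0}H_i\rarrow H_j$ be the morphism assembled from the transition maps $H_i\rarrow H_j$; such indices $j$ form a directed subposet cofinal in $I$, the morphisms $\rho_j g\:T\rarrow H_j$ are compatible with the transition morphisms, and each composite $T\xrightarrow{\,\rho_j g\,}H_j\rarrow\varinjlim_{i\in I}H_i$ (with the structural morphism of the colimit) equals the composition of $g$ with $\bigoplus_{i\in F_0}H_i\rarrow\varinjlim_{i\in I}H_i$, hence vanishes. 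The decisive step is to invoke finite presentability of $T$ at this point: since $\Hom_\sK(T,{-})$ preserves the direct limit $\varinjlim_{i\in I}H_i$, the compatible family $(\rho_j g)_j$ already dies at a finite stage, that is, there is an index $j_1\in I$ above $F_0$ with $\rho_{j_1}g=0$.

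Next I would put $F=F_0\cup\{j_1\}$ and let $\rho\:\bigoplus_{i\in F}H_i\rarrow H_{j_1}$ be the analogously defined morphism; it is a split epimorphism, a section being the coproduct injection $H_{j_1}\rarrow\bigoplus_{i\in F}H_i$ (as the transition map $H_{j_1}\rarrow H_{j_1}$ is the identity), so $P:=\ker\rho$ is a direct summand of $\bigoplus_{i\in F}H_i$. Viewed as a morphism into $\bigoplus_{i\in F}H_i$ through the subcoproduct over $F_0$, the morphism $g$ satisfies $\rho g=\rho_{j_1}g=0$, so it factors through~$P$; moreover the composite $P\hookrightarrow\bigoplus_{i\in F}H_i\hookrightarrow\coprod_{i\in I}H_i$ lands in $K$, because on $\bigoplus_{i\in F}H_i$ the morphism into $\varinjlim_{i\in I}H_i$ factors as $\bigoplus_{i\in F}H_i\xrightarrow{\,\rho\,}H_{j_1}\rarrow\varinjlim_{i\in I}H_i$, which kills $P$. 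Hence $f$ factors as $T\rarrow P\rarrow K$. Finally, $\bigoplus_{i\in F}H_i\in\varinjlim\sS$ (a finite coproduct of objects of $\varinjlim\sS$, by Proposition~\ref{varinjlim-varinjlim-closed}), so $P\in\varinjlim\sS$ by the auxiliary fact; applying the factorization criterion once more — to the object $P\in\varinjlim\sS$ and the finitely presentable object $T$ — we factor $T\rarrow P$ through some $S\in\sS$, and composing with $P\rarrow K$ gives the required factorization of $f$ through~$S$.

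The main obstacle is the middle step, producing the single index $j_1$ at which the morphism $T\rarrow H_{j_1}$ becomes zero; this is exactly the place where finite presentability of $T$, as opposed to mere finite generation, is indispensable, and it is the reason one works with probing morphisms from finitely presentable objects in the first place. It is also why one must funnel $f$ through the kernel $P$ of a morphism into a single $H_{j_1}$ before using the property that morphisms from finitely presentable objects into $\varinjlim\sS$ factor through $\sS$: trying to factor $\iota f\:T\rarrow\coprod_{i\in I}H_i$ through an object of $\sS$ at the outset would not keep the factorization inside~$K$.
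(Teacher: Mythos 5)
Your proof is correct, but it follows a genuinely different route from the paper's. The paper dispatches this corollary by appealing to one of two general facts: either that the epimorphism $\coprod_{i}H_i\rarrow\varinjlim_{i}H_i$ is \emph{pure} and the class $\varinjlim\sS$ is closed under pure subobjects (the category-theoretic version of Lenzing's result), or that the kernel can be exhibited explicitly as a direct limit of coproducts of copies of the objects $H_i$, after which Proposition~\ref{varinjlim-varinjlim-closed} (closure under coproducts and direct limits) finishes the job. You instead verify the factorization criterion of Proposition~\ref{varinjlim-varinjlim-closed} for the kernel directly: a probe $T\rarrow K$ from a finitely presentable object is pushed into a finite subcoproduct, finite presentability is invoked a second time to produce a single index $j_1$ at which the induced compatible family $\rho_j g$ vanishes, and the morphism is then funneled through the kernel $P$ of the split epimorphism $\bigoplus_{i\in F}H_i\rarrow H_{j_1}$, which is a direct summand of an object of $\varinjlim\sS$ and hence again in $\varinjlim\sS$. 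All the steps check out, including the auxiliary closure of $\varinjlim\sS$ under direct summands and the verification that $P\rarrow\coprod_i H_i$ factors through $K$ (using that $K\rarrow\coprod_iH_i$ is a monomorphism). In effect your argument is an unpacked, specialized form of the purity route: the split epimorphism onto $H_{j_1}$ is precisely the local splitting that witnesses purity of the inclusion $K\rarrow\coprod_iH_i$ against the probe $T$. What your version buys is self-containedness --- nothing is needed beyond Proposition~\ref{varinjlim-varinjlim-closed} and the definition of finite presentability --- at the cost of a longer computation; the paper's citations buy brevity and general statements (purity of the coproduct-to-colimit map, the direct-limit presentation of its kernel) that are reused in the $\kappa$-presentable setting of Section~\ref{direct-limits-of-kappa-P-classes}, though your argument would also adapt there with only notational changes.
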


\begin{proof}
 One can argue from purity considerations, observing that
the epimorphism~\eqref{coproduct-onto-direct-limit-epimorphism} is pure
(in the sense of the definition from~\cite[Section~3]{CB},
\cite[Section~4]{Sto} reproduced below in
Section~\ref{pure-exact-structure-secn}) and the class $\varinjlim\sS$
is closed under pure subobjects by the category-theoretic version
of~\cite[Proposition~2.2]{Len}.
 Alternatively, one can notice that the kernel
of~\eqref{coproduct-onto-direct-limit-epimorphism}
is a direct limit of coproducts of copies of the objects $H_i$,
following~\cite[proof of Proposition~4.1]{BPS}; then it remains
to refer to Proposition~\ref{varinjlim-varinjlim-closed}.
\end{proof}

 The following definitions appeared in the papers~\cite{Gil,BGP}.
 Let $\sK$ be a Grothendieck category and $n\ge1$ be an integer.
 An object $S\in\sK$ is said to be \emph{of type\/ $\FP_n$} if
the functors $\Ext_\sK^i(S,{-})\:\sK\rarrow\Ab$ preserve direct
limits for all $0\le i\le n-1$.
 So the objects of type $\FP_1$ are, by the definition, the finitely
presentable ones, while the objects of type $\FP_n$ for $n\ge2$ form
more narrow classes.

 An object $S\in\sK$ is said to be \emph{of type\/ $\FP_0$} if it is
finitely generated.
 An object $S$ is said to be \emph{of type\/ $\FP_\infty$} if it is
of type $\FP_n$ for every $n\ge0$, that is, in other words,
the functors $\Ext_\sK^i(S,{-})\:\sK\rarrow\Ab$ preserve direct
limits for all $i\ge0$.

 We use the term \emph{strongly finitely presentable object} as
a synonym for ``type $\FP_\infty$''.
 In the case of the module category $\sK=\Modr R$, these definitions
are equivalent to the ones from
Section~\ref{generalized-cotorsion-periodicity-secn}
(see~\cite[Proposition~3.10 of the published version
or Corollary~2.14 of the \texttt{arXiv} version]{BGP}).

 Closure properties of the classes of objects of type $\FP_n$ and
$\FP_\infty$ in locally finitely presentable abelian categories $\sK$
are listed in~\cite[Corollary~3.3]{Gil} and~\cite[Proposition~3.7 
of the published version or Proposition~2.8 of the \texttt{arXiv}
version]{BGP}.
 In particular, \cite[Proposition~3.7(1)]{BGP} tells that the class
of all objects of type $\FP_n$ is closed under extensions in~$\sK$.

\begin{prop} \label{FP2-varinjlim-extension-closed}
 Let\/ $\sK$ be a locally finitely presentable abelian category
and\/ $\sS$ be a class of (some) objects of type\/ $\FP_2$ closed under
extensions in\/~$\sK$.
 Then the class of objects\/ $\varinjlim\sS$ is also closed under
extensions in\/~$\sK$.
\end{prop}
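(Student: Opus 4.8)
The plan is to take a short exact sequence $0\rarrow A\rarrow B\rarrow C\rarrow0$ in $\sK$ with $A$, $C\in\varinjlim\sS$ and deduce $B\in\varinjlim\sS$, in two stages, the second of which is where the $\FP_2$ hypothesis is used. First I would reduce to the case $C\in\sS$. Writing $C=\varinjlim_{j\in J}C_j$ with $C_j\in\sS$ over a directed poset $J$ and pulling the sequence back along the structure morphisms $C_j\rarrow C$, one obtains short exact sequences $0\rarrow A\rarrow B_j\rarrow C_j\rarrow0$ together with compatible morphisms $B_j\rarrow B$; since direct limits are exact in the Grothendieck category $\sK$, applying $\varinjlim_j$ and comparing with the original sequence via the five lemma yields $B\cong\varinjlim_j B_j$. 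As $\varinjlim\sS$ is closed under direct limits by Proposition~\ref{varinjlim-varinjlim-closed} (which applies since $\sS$, being closed under extensions, is closed under finite direct sums), it suffices to show each $B_j\in\varinjlim\sS$; that is, we may assume $C\in\sS$ from the start.

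In the second stage, $C$ is of type $\FP_2$ by hypothesis, so $\Ext^1_\sK(C,{-})$ preserves direct limits. Write $A=\varinjlim_{i\in I}A_i$ with $A_i\in\sS$ over a directed poset $I$. The class of our extension in $\Ext^1_\sK(C,A)=\varinjlim_i\Ext^1_\sK(C,A_i)$ is then the image of some class $e_i\in\Ext^1_\sK(C,A_i)$; represent $e_i$ by a short exact sequence $0\rarrow A_i\rarrow Q_i\rarrow C\rarrow0$, and note $Q_i\in\sS$ because $\sS$ is closed under extensions. For each $i'\ge i$ in $I$, let $Q_{i'}$ be the pushout of this sequence along the transition morphism $A_i\rarrow A_{i'}$, so that $0\rarrow A_{i'}\rarrow Q_{i'}\rarrow C\rarrow0$ is exact and $Q_{i'}\in\sS$ again by extension-closure; the $Q_{i'}$ form a direct system indexed by the directed subset $\{i'\in I:i'\ge i\}$, which is cofinal in $I$. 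Since colimits commute with colimits and $\varinjlim_{i'\ge i}A_{i'}=A$, the direct limit $\varinjlim_{i'\ge i}Q_{i'}$ is the pushout of $0\rarrow A_i\rarrow Q_i\rarrow C\rarrow0$ along $A_i\rarrow A$; by the choice of $e_i$, this pushout represents the class of the original extension $0\rarrow A\rarrow B\rarrow C\rarrow0$, hence its middle term is isomorphic to $B$. Therefore $B\cong\varinjlim_{i'\ge i}Q_{i'}\in\varinjlim\sS$.

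I expect the one point requiring care to be the categorical bookkeeping at the end of the second stage: one must verify that the pushout of $0\rarrow A_i\rarrow Q_i\rarrow C\rarrow0$ along the canonical morphism $A_i\rarrow A=\varinjlim_{i'\ge i}A_{i'}$ is computed by $\varinjlim_{i'\ge i}Q_{i'}$ (using that a pushout is a finite colimit, that direct sums with a fixed object commute with direct limits, and that $\{i'\ge i\}$ is cofinal in $I$), and that this pushout indeed recovers $B$. The first reduction and the repeated checks that $Q_i$ and the $Q_{i'}$ lie in $\sS$ are routine. It is precisely in the appeal to $\Ext^1_\sK(C,{-})$ commuting with direct limits that the hypothesis ``$\FP_2$'' — as opposed to merely ``finitely presentable'' — enters, which is also the crux of the argument of~\cite[Proposition~8.4]{PS5} that this generalizes.
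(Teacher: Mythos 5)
Your proof is correct and is essentially the paper's argument (which follows \cite[Proposition~8.4]{PS5}) written out concretely: your Stage~1 is the inclusion $\sX*\varinjlim\sY\subset\varinjlim(\sX*\sY)$, your Stage~2 is the inclusion $(\varinjlim\sX)*\sT\subset\varinjlim(\sX*\sT)$ via preservation of $\Ext^1_\sK(C,{-})$ under direct limits, and your appeal to Proposition~\ref{varinjlim-varinjlim-closed} is the final step $\varinjlim\varinjlim\sS=\varinjlim\sS$. The bookkeeping you flag (the filtered colimit of pushouts computing the pushout along $A_i\to A$, and the reduction of the quotient by pulling back and using exactness of direct limits) is exactly what the cited reference carries out.
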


\begin{proof}
 We follow the proof of~\cite[Proposition~8.4]{PS5}.
 Given an abelian category $\sK$ and two classes of objects $\sX$,
$\sY\subset\sK$, denote by $\sX*\sY$ the class of all objects $Z\in\sK$
for which there exists a short exact sequence $0\rarrow X\rarrow Z
\rarrow Y\rarrow0$ in $\sK$ with $X\in\sX$ and $Y\in\sY$.
 In the situation at hand, we need to prove that $\varinjlim\sS*
\varinjlim\sS\subset\varinjlim\sS$.
 For this purpose, we claim that the three inclusions
\begin{equation} \label{three-inclusions}
 \varinjlim\sT*\varinjlim\sT\subset\varinjlim(\varinjlim\sT*\sT)
 \subset\varinjlim\varinjlim(\sT*\sT)\subset\varinjlim(\sT*\sT)
\end{equation}
hold for any class of $\FP_2$ objects $\sT$ closed under finite
direct sums in~$\sK$.

 More generally, it is explained in~\cite[first part of the proof of
Proposition~8.4]{PS5} that the inclusion $\sX*\varinjlim\sY\subset
\varinjlim(\sX*\sY)$ holds for any two classes of objects $\sX$
and $\sY$ in a Grothendieck category~$\sK$.
 This takes care of the leftmost inclusion in~\eqref{three-inclusions}.

 Furthermore, any $\FP_2$ object is, by definition, finitely
presentable; and the class of all finitely presentable objects is
closed under extensions in $\sK$ by~\cite[Lemma~8.1]{PS5}.
 So we certainly have $\sT*\sT\subset\sK_\fp$.
 It is clear that the class $\sT*\sT$ is closed under finite direct
sums whenever the class $\sT$~is.
 For any class of finitely presentable objects $\sS$ closed under
finite direct sums in $\sK$, we have $\varinjlim\varinjlim\sS=
\varinjlim\sS$ by Proposition~\ref{varinjlim-varinjlim-closed}.
 This explains the rightmost inclusion in~\eqref{three-inclusions}.

 Finally, the middle inclusion in~\eqref{three-inclusions} is provable
similarly to the proof in~\cite{PS5}.
 Let $\sK$ be a Grothendieck abelian category, $\sX\subset\sK$ be
a class of objects, and $\sT\subset\sK$ be a class of objects such that
the functor $\Ext^1_\sK(T,{-})\:\sK\rarrow\Ab$ preserves direct limits
for all objects $T\in\sT$.
 We claim that the inclusion $(\varinjlim\sX)*\sT\subset
\varinjlim(\sX*\sT)$ holds.
 The argument from~\cite[second part of the proof of
Proposition~8.4]{PS5} applies.
\end{proof}

 In the following lemma, which is stated in the great generality of
localizing multiplicative subsets in arbitrary categories, we ignore
the distinction between sets and classes.
 The reader will not lose much by assuming the category $\sC$ to be
small.
 The aim of this lemma is to provide the details of an argument
sketched in~\cite[proof of Lemma~8.3]{PS5}.
 We will use it in the next
Lemma~\ref{kernel-closed-implies-FP-infinity}.

\begin{lem} \label{calculus-of-fractions-lemma}
 Let\/ $\sC$ be a category and\/ $\Sigma\subset\sC$ be a localizing
multiplicative subset/subclass of morphisms (i.~e., a subset of
morphisms containing the identity morphisms, closed under
the composition, and satisfying the Ore conditions).
 Let $X$, $Y\in\sC$ be two fixed objects. \par
\textup{(a)} Consider the slice category $(\sC\downarrow X)$ of
all morphisms into the object $X$ in the category\/ $\sC$, and let\/
$\Sigma_X\subset (\sC\downarrow X)$ denote the full subcategory on
the class of all morphisms $s\:R\rarrow X$ with $R\in\sC$ and
$s\in\Sigma$.
 Then the category\/ $\Sigma_X^\sop$ opposite to\/ $\Sigma_X$
is filtered.
 The natural map of sets\/ $\varinjlim_{(s\:R\to X)\in\Sigma_X}
\Hom_\sC(R,Y)\rarrow\Hom_{\sC[\Sigma^{-1}]}(X,Y)$ is bijective;
so the set\/ $\Hom_{\sC[\Sigma^{-1}]}(X,Y)$ of morphisms in the localized
category\/ $\sC[\Sigma^{-1}]$ is the filtered direct limit of the sets\/
$\Hom_\sC(R,Y)$ indexed over the category\/~$\Sigma_X^\sop$. \par
\textup{(b)} Let\/ $\sF\subset\sC$ be a full subcategory such that
for every morphism $s\:R\rarrow X$ in\/ $\sC$ with $s\in\Sigma$ there
exists a morphism $f\:F\rarrow R$ in\/ $\sC$ with $F\in\sF$
and $sf\in\Sigma$.
 Denote by\/ $\Sigma_{\sF,X}\subset\Sigma_X$ the full subcategory on
the class of all morphisms $t\:F\rarrow X$ with $t\in\Sigma$ and
$F\in\sF$.
 Then the full subcategory\/ $\Sigma_{\sF,X}^\sop\subset\Sigma_X^\sop$ is
filtered and cofinal in\/~$\Sigma_X^\sop$.
 Accordingly, the set\/ $\Hom_{\sC[\Sigma^{-1}]}(X,Y)$ can be computed as
the filtered direct limit\/ $\Hom_{\sC[\Sigma^{-1}]}(X,Y)\simeq
\varinjlim_{(t\:F\to X)\in\Sigma_{\sF,X}}\Hom_\sC(F,Y)$
indexed over\/~$\Sigma_{\sF,X}^\sop$.
\end{lem}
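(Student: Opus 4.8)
The plan is to verify both parts essentially from the standard theory of calculus of fractions, keeping careful track of the filteredness statements. For part~(a), I would first recall that a category is filtered if and only if every finite diagram in it admits a cocone; equivalently, it is nonempty, any two objects admit a common object receiving morphisms from both, and any two parallel morphisms are coequalized by a further morphism. So for $\Sigma_X^\sop$ I must check: (i)~$\Sigma_X$ is nonempty --- which holds since $\id_X \in \Sigma$; (ii)~given two objects $s_1\:R_1\to X$ and $s_2\:R_2\to X$ of $\Sigma_X$, there is an object $t\:R\to X$ with morphisms $R\to R_1$ and $R\to R_2$ in $(\sC\downarrow X)$ lying in $\Sigma$; and (iii)~given two parallel morphisms in $\Sigma_X$ (i.e.\ $u,v\:R_1\to R_2$ with $s_2 u = s_1 = s_2 v$), there is $w\:R\to R_1$ in $\Sigma_X$ with $uw = vw$. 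Both (ii)~and (iii)~are exactly what the left Ore conditions (the ``calculus of left fractions'', applied on the opposite side for morphisms \emph{into} $X$) supply: one uses the pullback-like Ore square for (ii) and the Ore condition on parallel morphisms for (iii). Then the classical description of $\Hom_{\sC[\Sigma^{-1}]}(X,Y)$ as equivalence classes of left fractions $X \xleftarrow{s} R \xrightarrow{f} Y$ with $s\in\Sigma$, with the equivalence relation generated by refinement, is precisely the statement that this $\Hom$-set is the filtered colimit $\varinjlim_{(s\:R\to X)\in\Sigma_X}\Hom_\sC(R,Y)$; the map in the statement sends a representative $(s,f)$ to the fraction $fs^{-1}$, and the calculus-of-fractions theory shows it is well-defined and bijective.

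For part~(b), the assumption on $\sF$ says exactly that $\Sigma_{\sF,X}$ is \emph{cofinal} in $\Sigma_X$ when viewed in the appropriate direction: for every object $s\:R\to X$ of $\Sigma_X$ there is an object $t\:F\to X$ of $\Sigma_{\sF,X}$ together with a morphism $F\to R$ in $\Sigma_X$ (namely the given $f\:F\to R$, which lies in $\Sigma$ because $sf\in\Sigma$ and $s\in\Sigma$ and $\Sigma$ satisfies the two-out-of-three-type closure implicit in the Ore setup --- or more simply, $sf\in\Sigma$ already makes $t=sf$ an object of $\Sigma_{\sF,X}$ with the structure morphism $f$ to $s$). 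I would then check that $\Sigma_{\sF,X}^\sop$ is itself filtered: conditions (ii)~and (iii)~for $\Sigma_{\sF,X}^\sop$ follow by first applying (ii)/(iii) in the larger filtered category $\Sigma_X^\sop$ and then using the $\sF$-covering hypothesis once more to replace the resulting object of $\Sigma_X$ by one of $\Sigma_{\sF,X}$ lying below it. Cofinality of a filtered subcategory then gives that the colimit over $\Sigma_{\sF,X}^\sop$ agrees with the colimit over $\Sigma_X^\sop$, which by part~(a) is $\Hom_{\sC[\Sigma^{-1}]}(X,Y)$. This yields the displayed isomorphism.

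The routine but slightly fiddly part is bookkeeping the variance: the Ore conditions are usually stated for left (or right) fractions, and here we are forming fractions to compute morphisms \emph{out of} $X$ using denominators \emph{into} $X$, so one must be consistent about whether ``localizing multiplicative'' refers to a left or right calculus and dualize accordingly throughout. Once that convention is fixed, parts (a)~and (b)~are each a direct citation of, respectively, the Gabriel--Zisman description of localization via a filtered calculus of fractions and the general fact that a cofinal filtered subcategory computes the same colimit.

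\medskip

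I expect the main (very mild) obstacle to be purely expository: making the filteredness of $\Sigma_X^\sop$ and the cofinality of $\Sigma_{\sF,X}^\sop$ fully explicit from the Ore axioms without either invoking the result one is trying to prove or drowning in diagram-chasing. Since the lemma's declared purpose is just to record details sketched in~\cite[proof of Lemma~8.3]{PS5}, I would keep the verification of the three filteredness conditions terse, cite~\cite{AR} (or a standard reference on calculus of fractions) for the bijection in~(a), and spend the bulk of the written proof on the cofinality argument in~(b), which is the one genuinely used in Lemma~\ref{kernel-closed-implies-FP-infinity}.
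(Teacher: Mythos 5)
Your proposal is correct and follows essentially the same route as the paper, which likewise reduces part~(a) to the Gabriel--Zisman description of morphisms in the localization via a filtered calculus of fractions and part~(b) to the standard fact that a cofinal (full) subcategory of a filtered category is filtered and computes the same colimit. Your momentary worry in~(b) about whether $f$ itself lies in $\Sigma$ is resolved exactly as you say: since $\Sigma_{\sF,X}$ and $\Sigma_X$ are \emph{full} subcategories of the slice category, $f$ is automatically a morphism $(sf)\to s$ there once $sf\in\Sigma$, and no saturation of $\Sigma$ is needed.
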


\begin{proof}
 Part~(a): the assertion that the category $\Sigma_X^\sop$ is filtered
(in the sense of~\cite[Definition~1.4]{AR}) whenever the class of
morphisms $\Sigma\subset\sC$ admits a calculus of right fractions
(in the sense of~\cite[Sections~I.2.2 and~I.2.4]{GZ})
is straightforward.
 The desired isomorphism of Hom sets is the opposite assertion
to~\cite[Proposition~I.2.4]{GZ}.
 Part~(b) is clear in view of the discussion in~\cite[Section~0.11 and
Exercise~1.o(3)]{AR}.
\end{proof}

\begin{lem} \label{kernel-closed-implies-FP-infinity}
 Let\/ $\sK$ be a Grothendieck category and\/ $\sS\subset\sK$ be
a class of finitely generated objects containing a set of generators
of\/ $\sK$ and closed under finite direct sums and kernels of
epimorphisms.
 Then all the objects from\/ $\sS$ are strongly finitely presentable
(type\/ $\FP_\infty$).
\end{lem}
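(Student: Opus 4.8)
The plan is to proceed in two stages: first to show that every object $S\in\sS$ is finitely presentable (so that $\sK$ becomes locally finitely presentable), and then to upgrade this to type~$\FP_\infty$. The starting point for both stages is the observation that the hypotheses produce resolutions \emph{inside}~$\sS$: since $\sS$ contains a set of generators~$\sG$ and is closed under finite direct sums, any $S\in\sS$ admits an epimorphism $P_0\rarrow S$ from a finite direct sum $P_0\in\sS$ of objects of~$\sG$, and since $\sS$ is closed under kernels of epimorphisms the first syzygy $K_1=\ker(P_0\rarrow S)$ lies again in~$\sS$; iterating, every $S\in\sS$ has a resolution $\dotsb\rarrow P_1\rarrow P_0\rarrow S\rarrow0$ all of whose terms $P_i$ are finite direct sums of generators and all of whose syzygies $K_i$ lie in~$\sS$.

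For the first stage I would first record that, for $S\in\sS$, \emph{every} epimorphism $\pi\colon X\rarrow S$ from a finitely generated object~$X$ has finitely generated kernel: choosing an epimorphism $q\colon P\rarrow X$ with $P\in\sS$ a finite direct sum of generators, the composite $\pi q\colon P\rarrow S$ is an epimorphism between objects of~$\sS$, so $\ker(\pi q)\in\sS$ is finitely generated, and the snake lemma yields an epimorphism $\ker(\pi q)\rarrow\ker\pi$. From this I would deduce finite presentability of~$S$: injectivity of the comparison map $\varinjlim_j\Hom_\sK(S,L_j)\rarrow\Hom_\sK(S,\varinjlim_jL_j)$ is automatic because $S$ is finitely generated, and for surjectivity one factors a given $g\colon S\rarrow\varinjlim_jL_j$ through a finitely generated subobject of $\varinjlim_jL_j$ that is a quotient of some~$L_{j_0}$, forms the corresponding pullback, passes to a finitely generated subobject of it still surjecting onto~$S$, and uses that this last epimorphism has finitely generated kernel (hence a kernel landing inside $\ker(L_{j_0}\rarrow L_{j_1})$ for some $j_1\ge j_0$) to lift $g$ to~$L_{j_1}$. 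Lemma~\ref{calculus-of-fractions-lemma} is the tool that makes this factorization run cleanly, by presenting the relevant hom-sets as filtered direct limits over the cofinal family of objects of~$\sS$ surjecting onto~$S$.

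For the second stage, once all objects of $\sS$ are finitely presentable the resolutions above are resolutions by finitely presentable objects, and $S$ is represented in $\bD(\sK)$ by the complex $(\dotsb\rarrow P_1\rarrow P_0)$; it then remains to see that $\Ext^n_\sK(S,-)$ preserves direct limits for all $n\ge1$. Here I expect the main difficulty: the obvious induction on~$n$ through the long exact sequence of $\Ext_\sK$-groups attached to $0\rarrow K_1\rarrow P_0\rarrow S\rarrow0$ is circular, since it reduces control of $\Ext^n_\sK(S,-)$ to control of $\Ext^n_\sK(P_0,-)$, i.e.\ of $\Ext^n_\sK$ of a finite direct sum of generators --- an instance of the same assertion one step further along. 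The remedy is to carry out the calculus-of-fractions argument of the first stage in the category of complexes (or in $\bD(\sK)$), where the fact that the resolutions of objects of~$\sS$ keep all their terms and syzygies within~$\sS$ forces the required compactness-type property for all of~$\sS$ simultaneously, and hence yields $\FP_\infty$ directly rather than only finite presentability.
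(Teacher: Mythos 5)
Your proposal is correct and follows essentially the same route as the paper: first one shows every object of $\sS$ is finitely presentable via the criterion that every epimorphism onto $S\in\sS$ from a finitely generated object has finitely generated kernel (the paper simply delegates this step to Stenstr\"om's Proposition~V.3.4, while you reprove it), and then one obtains type $\FP_\infty$ by presenting $\Ext^n_\sK(S,Y)$ as a filtered direct limit of $H^n\Hom_\sK(T_\bu,Y)$ over the cofinal family of resolutions $T_\bu\rarrow S$ with all terms and syzygies in $\sS$, which is exactly the use the paper makes of Lemma~\ref{calculus-of-fractions-lemma} applied to $\Sigma=$ quasi-isomorphisms in $\bH^-(\sK)$. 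Your diagnosis that the naive dimension-shifting induction is circular (the generators are not projective) is precisely why the paper routes the second stage through the calculus of fractions; all that remains for you is to record the cofinality of $\sS$-resolutions (built inductively from self-generation and closure under kernels of epimorphisms) and the standard colimit interchanges (termwise finite presentability of $T_\bu$, cohomology of complexes of abelian groups commuting with filtered direct limits, and direct limits commuting with direct limits).
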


\begin{proof}
 First of all, the category $\sK$ is locally finitely generated,
since it has a set of finitely generated generators by assumption.
 In this context, it is explained in~\cite[Proposition~V.3.4]{Sten}
that an object $S\in\sK$ is finitely presentable if and only if
the kernel of any epimorphism onto $S$ from any finitely generated
object $T\in\sK$ is finitely generated.
 Following the argument in~\cite{Sten}, based
on~\cite[Lemma~V.3.3]{Sten}, one can see that it suffices to let $T$
range over the quotient objects of finite direct sums of objects
from a chosen set $\sG$ of finitely generated generators of~$\sK$.
 Furthermore, the passage to the quotients holds automatically, and
so it suffices to let $T$ range over the finite direct sums of
objects from~$\sG$.
 In the situation at hand, choosing $\sG\subset\sS$, we conclude that
all the objects in $\sS$ are finitely presentable.

 The rest of the proof is similar to~\cite[proof of Lemma~8.3]{PS5}
and based on Lemma~\ref{calculus-of-fractions-lemma}.
 For any two objects $X$ and $Y$ in any abelian category $\sK$,
the abelian group $\Ext^n_\sK(X,Y)$ can be computed as the group of
morphisms $X\rarrow Y[n]$ in the derived category $\bD^-(\sK)$.
 The latter category can be constructed by inverting the class of
quasi-isomorphisms in the cochain homotopy category $\sC=\bH^-(\sK)$.
 As the class of quasi-isomorphisms $\Sigma$ is localizing
in $\bH^-(\sK)$, Lemma~\ref{calculus-of-fractions-lemma} provides
the description of the group $\Ext^n_\sK(X,Y)$ as the direct limit of
cohomology groups $H^n\Hom_\sK(R_\bu,Y)$, taken over the (large)
filtered category of exact complexes $\dotsb\rarrow R_2\rarrow
R_1\rarrow R_0\rarrow X\rarrow0$ in~$\sK$.
 Here the morphisms in the category of such arbitrary resolutions
$R_\bu\rarrow X$ are the usual morphisms of complexes acting by
the identity maps on the object $X$ and viewed up to cochain homotopy.
 The full subcategory of resolutions $R_\bu\rarrow X$ is cofinal
in~$\Sigma_X^\sop$.

 In the situation at hand, for any object $S\in\sS$, the full
subcategory of resolutions $T_\bu\rarrow S$ consisting of objects
$T_i\in\sS$ is cofinal in the category of all resolutions
$R_\bu\rarrow S$ with $R_i\in\sK$ by
Lemma~\ref{calculus-of-fractions-lemma}(b).
 Indeed, given a resolution $R_\bu\rarrow S$, one can construct
a resolution $T_\bu\rarrow S$ with $T_i\in\sS$ together with a morphism
of resolutions $T_\bu\rarrow R_\bu$, using the standard inductive
construction based on the observations that the full subcategory $\sS$
is self-generating (in the sense of
Section~\ref{generalized-cotorsion-periodicity-secn})
and closed under kernels of epimorphisms in~$\sK$.
 So one can compute the group $\Ext^n_\sK(S,Y)$ as the filtered
direct limit of $H^n\Hom_\sK(T_\bu,Y)$ taken over all the resolutions
$T_\bu\rarrow S$ with $T_i\in\sS$.
 Now, since all the objects of $\sS$ are finitely presentable,
the functor $\Hom_\sK(T_\bu,{-})$ takes direct limits in $\sK$ to
direct limits of complexes of abelian groups.
 It remains to recall that the functors of cohomology of a complex
of abelian groups preserve direct limits, and direct limits commute
with direct limits.
\end{proof}

\begin{prop} \label{varinjlim-kernel-closed}
 Let $\sK$ be a Grothendieck category and\/ $\sS\subset\sK$ be a class
of finitely generated objects containing a set of generators of\/ $\sK$
and closed under extensions and kernels of epimorphisms.
 Then the class of objects\/ $\varinjlim\sS\subset\sK$ is closed under
coproducts, direct limits, extensions, and kernels of epimorphisms.
\end{prop}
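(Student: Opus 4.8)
The plan is to first strengthen the hypotheses on $\sS$ using Lemma~\ref{kernel-closed-implies-FP-infinity}, then dispatch the ``easy'' closure properties via the propositions already proved, and finally obtain closure under kernels of epimorphisms from an $\Ext$ computation. So: by Lemma~\ref{kernel-closed-implies-FP-infinity}, all objects of $\sS$ are strongly finitely presentable ($\FP_\infty$); in particular they are finitely presentable, whence $\sS\subset\sK_\fp$, the category $\sK$ is even locally finitely presentable, and every object of $\sS$ is of type $\FP_2$. Moreover $\sS$ is closed under finite direct sums, since $X\oplus Y$ is an extension of $Y$ by $X$. Hence Proposition~\ref{varinjlim-varinjlim-closed} applies and shows that $\varinjlim\sS$ is closed under coproducts and direct limits in $\sK$, while Proposition~\ref{FP2-varinjlim-extension-closed} (which uses only the $\FP_2$ property) shows that $\varinjlim\sS$ is closed under extensions. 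It remains to prove closure under kernels of epimorphisms, which is the substance of the statement.

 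For this, I would set $\sC=\varinjlim\sS$ and $\sD=\sC^{\perp_1}$. The Angeleri H\"ugel--Trlifaj theorem (in the form of Corollary~\ref{angeleri-trlifaj-cotorsion-pair-in-lfp-category}; cf.~\cite[Corollary~2.4]{AT}, \cite[Corollary~8.42]{GT}) shows that $(\sC,\sD)$ is a complete cotorsion pair in $\sK$, so in particular $\sC={}^{\perp_1}\sD$. On the other hand, since $\sS$ contains a set of generators and is closed under finite direct sums, it is self-generating; being moreover closed under extensions and kernels of epimorphisms, it is self-resolving, so the cotorsion pair $(\sA,\sB)=({}^{\perp_1}(\sS^{\perp_1}),\,\sS^{\perp_1})$ generated by $\sS$ is hereditary (\cite[Corollary~5.25(a)]{GT}, \cite[Lemma~1.3]{BHP}, \cite[Lemma~6.1]{PS6}); since $\sD\subset\sS^{\perp_1}=\sB$, this yields $\Ext^n_\sK(A,D)=0$ for all $A\in\sA$, $D\in\sD$, and $n\ge1$. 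The crucial step is now the vanishing $\Ext^2_\sK(C,D)=0$ for all $C\in\sC$ and $D\in\sD$. To get it, write $C=\varinjlim_{i\in I}S_i$ with $S_i\in\sS$ over a directed poset $I$, and form the canonical short exact sequence $0\to Z\to\coprod_{i\in I}S_i\to C\to0$. Here $\coprod_{i\in I}S_i\in\sA$ (a coproduct of objects of $\sS$, and $\sA$ is closed under coproducts), while $Z$ belongs again to $\sC=\varinjlim\sS$ by Corollary~\ref{kernel-of-coproduct-onto-direct-limit-cor}. In the long exact sequence $\Ext^1_\sK(Z,D)\to\Ext^2_\sK(C,D)\to\Ext^2_\sK(\coprod_i S_i,D)$ the left-hand term vanishes because $Z\in\sC={}^{\perp_1}\sD$ and the right-hand term vanishes because $\coprod_i S_i\in\sA$ and $D\in\sB$; therefore $\Ext^2_\sK(C,D)=0$.

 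Finally, given a short exact sequence $0\to A\to B\to C\to0$ in $\sK$ with $B,C\in\sC$, and any $D\in\sD$, the long exact sequence $\Ext^1_\sK(B,D)\to\Ext^1_\sK(A,D)\to\Ext^2_\sK(C,D)$ has both outer terms zero ($B\in\sC={}^{\perp_1}\sD$ on the left, the vanishing just established on the right), so $\Ext^1_\sK(A,D)=0$; as this holds for every $D\in\sD$, we conclude $A\in{}^{\perp_1}\sD=\sC$, that is, $\sC=\varinjlim\sS$ is closed under kernels of epimorphisms. The step I expect to require the most care is the $\Ext^2$-vanishing: its whole force is that Corollary~\ref{kernel-of-coproduct-onto-direct-limit-cor} keeps the kernel $Z$ of the canonical presentation of a direct limit inside $\varinjlim\sS$, which is precisely what lets one trade the needed $\Ext^2$ against $\sD$ for an $\Ext^1$ against $\sD$. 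A secondary point to check is that the hereditariness of $(\sA,\sB)$ is legitimately available here --- which it is, precisely because the hypothesis that $\sS$ is closed under kernels of epimorphisms (together with $\sS$ containing generators) makes $\sS$ self-resolving.
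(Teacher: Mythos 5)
Your proof is correct, but the part that carries all the weight --- closure under kernels of epimorphisms --- is argued by a genuinely different route than the paper's. The paper stays elementary and constructive: it presents $D=\varinjlim_i S_i$, pulls the epimorphism $C\to D$ back along the direct system, builds epimorphisms $H^{(\Xi_i)}\to S_i$ from coproducts of generators whose kernels are controlled by finite generation of $S_i$ together with closure of $\sS$ under kernels of epimorphisms, passes to the limit using Corollary~\ref{kernel-of-coproduct-onto-direct-limit-cor}, and finishes with a pullback/splitting trick; the only inputs are the closure properties already established in that section. You instead identify $\sC=\varinjlim\sS$ with ${}^{\perp_1}\sD$ via the complete cotorsion pair $(\sC,\sD)$ and then run a two-step $\Ext$ computation, whose pivot is the vanishing $\Ext^2_\sK(C,D)=0$ obtained from the canonical presentation $0\to Z\to\coprod_i S_i\to C\to0$ (again via Corollary~\ref{kernel-of-coproduct-onto-direct-limit-cor}) together with $\sS^{\perp_1}=\sS^{\perp_{\ge1}}$. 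That pivot is sound, and your appeal to the self-resolving property of $\sS$ to get hereditariness of $(\sA,\sB)$ is legitimate (note that \cite[Corollary~5.25(a)]{GT} is module-specific, but \cite[Lemma~1.3]{BHP} and \cite[Lemma~6.1]{PS6} cover the Grothendieck-category case, exactly as the paper itself uses them in the proof of Theorem~\ref{theorem-B-as-periodicity-claim}).

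The trade-off is that your argument imports much heavier machinery: Corollary~\ref{angeleri-trlifaj-cotorsion-pair-in-lfp-category} rests on the deconstructibility of $\sC$ (Proposition~\ref{weakly-deconstructible-prop}, proved via the equivalence with flat modules over a ring with many objects and the Bican--El~Bashir--Enochs lemma) and on the Eklof--Trlifaj theorem. It also inverts the paper's logical architecture: the paper uses Proposition~\ref{varinjlim-kernel-closed} as one way to establish that $(\sC,\sD)$ is hereditary (see the proof of Theorem~0(b)), whereas you derive the proposition from the cotorsion-pair structure. I checked that this creates no circularity --- neither Proposition~\ref{weakly-deconstructible-prop} nor Corollary~\ref{angeleri-trlifaj-cotorsion-pair-in-lfp-category} depends on Proposition~\ref{varinjlim-kernel-closed} --- but your proof could only be placed after the material of Section~\ref{pure-exact-structure-secn}, whereas the paper's proof is self-contained within Section~\ref{direct-limit-closures-of-FP-classes}.
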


\begin{proof}
 By Lemma~\ref{kernel-closed-implies-FP-infinity}, all the objects
in $\sS$ are finitely presentable, and in fact even strongly finitely
presentable.
 As the class $\sS$ contains a set of generators for $\sK$, it follows
that the category $\sK$ is locally finitely presentable.
 So Proposition~\ref{varinjlim-varinjlim-closed} is applicable, telling
that the class $\varinjlim\sS$ is closed under coproducts and direct
limits in~$\sK$.
 Furthermore, Proposition~\ref{FP2-varinjlim-extension-closed} tells
that the class $\varinjlim\sS$ is closed under extensions.
 It remains to prove its closedness under kernels of epimorphisms
(cf.~\cite[Proposition~9.16]{CCS} for a somewhat related but
apparently different result).

 Let $C\rarrow D$ be an epimorphism in $\sK$ between two objects
$C$, $D\in\varinjlim\sS$.
 Then there exists a direct system $(S_i)_{i\in I}$, indexed
by a directed poset $I$, such that $S_i\in\sS$ for all $i\in I$
and $D=\varinjlim_{i\in I}S_i$.
 For every $i\in I$, consider the pullback diagram
\begin{equation} \label{direct-limits-morphism-pullback}
\begin{gathered}
 \xymatrix{
  C_i \ar@{->>}[r] \ar[d] & S_i \ar[d] \\
  C \ar@{->>}[r] & D
 }
\end{gathered}
\end{equation}
 Here $C_i$ is the pullback of the given epimorphism $C\rarrow D$
and the natural morphism to the direct limit $S_i\rarrow D$.
 As the index $i\in I$ varies, the upper lines
of~\eqref{direct-limits-morphism-pullback} form a direct system
of (epi)morphisms in $\sK$, whose direct limit is the epimorphism
$C\rarrow D$ in the lower line of the diagram.

 Choose a set of generators $\sG\subset\sS$ of the category $\sK$,
and put $H=\coprod_{G\in\sG}G$.
 For every index $i\in I$, denote by $\Xi_i$ the underlying set of
the image of the abelian group map $\Hom_\sK(H,C_i)\rarrow
\Hom_\sK(H,S_i)$ induced by the morphism $C_i\rarrow S_i$.
 Then, for every pair of indices $i<j\in I$, the transition morphism
$S_i\rarrow S_j$ induces a map of sets $\Xi_i\rarrow\Xi_j$.
 So we obtain a direct system of sets $(\Xi_i)_{i\in I}$.

 For any object $K\in\sK$ and any set $\Xi$, let us denote by
$K^{(\Xi)}$ the coproduct of $\Xi$ copies of $K$ in~$\sK$.
 Notice that the assignment $(K,\Xi)\longmapsto K^{(\Xi)}$ is
a covariant functor $\sK\times\Sets\rarrow\sK$ (i.~e., a covariant
functor of both the arguments $K\in\sK$ and $\Xi\in\Sets$).

 For every index $i\in I$ we have a natural morphism
$h_i\:H^{(\Xi_i)}\rarrow S_i$ in $\sK$.
 Since $H$ is a generator of the category $\sK$, and the morphism
$C_i\rarrow S_i$ is an epimorphism, the morphism~$h_i$ is
an epimorphism in $\sK$ as well.
 As the index~$i$ varies, the morphisms~$h_i$ form a direct system
$(h_i)_{i\in I}$ in the category of morphisms in~$\sK$.

 Let us show that the kernel $L_i$ of the morphism~$h_i$ belongs to
$\varinjlim\sS$.
 The object $H^{(\Xi_i)}$ is the coproduct of copies of all the objects
$G\in\sG$, each of them taken with the multiplicity~$\Xi_i$.
 Since the object $S_i$ is finitely generated, there exists a finite
subcoproduct in this coproduct mapping epimorphically onto~$S_i$.
 So we have a direct sum decomposition $H^{(\Xi_i)}=H'_i\oplus H''_i$,
where $H'_i$ is a finite direct sum of objects from $\sG$ and
the restriction of~$h_i$ onto $H'_i$ is an epimorphism
$h'_i\:H'_i\rarrow S_i$.
 Denote by $K_i$ the kernel of~$h'_i$.
 We have constructed a pushout diagram
$$
 \xymatrix{
  K_i \ar@{>->}[r] \ar@{>->}[d] & H'_i \ar@{->>}[r] \ar@{>->}[d]
  & S_i \ar@{=}[d] \\
  L_i \ar@{>->}[r] \ar@{->>}[d] & H_i \ar@{->>}[r] \ar@{->>}[d]
  & S_i \\
  H''_i \ar@{=}[r] & H''_i
 }
$$
 Now $H'_i\in\sS$, since the class $\sS$ is closed under finite direct
sums and $\sG\subset\sS$.
 Hence $K_i\in\sS$, as the class $\sS$ is closed under kernels
of epimorphisms.
 On the other hand, $H''_i\in\varinjlim\sS$, since the class
$\varinjlim\sS$ is closed under coproducts.
 As we already know that the class $\varinjlim\sS$ is closed under
extensions, we can conclude from the short exact sequence
$0\rarrow K_i\rarrow L_i\rarrow H''_i\rarrow0$ that $L_i\in
\varinjlim\sS$.

 Passing to the direct limit of~$h_i$ over $i\in I$, we see that
the kernel of the epimorphism
$$
 \varinjlim\nolimits_{i\in I}H^{(\Xi_i)}\lrarrow
 \varinjlim\nolimits_{i\in I}S_i=D
$$
belongs to $\varinjlim\varinjlim\sS=\varinjlim\sS$.
 We already know from
Corollary~\ref{kernel-of-coproduct-onto-direct-limit-cor} that
the kernel of
the epimorphism~\eqref{coproduct-onto-direct-limit-epimorphism}
(for $H_i=H^{(\Xi_i)}$) belongs to $\varinjlim\sS$.
 Since the class $\varinjlim\sS$ is closed under extensions, it
follows that the kernel $M$ of the composition of epimorphisms
$$
 H=\coprod\nolimits_{i\in I}H^{(\Xi_i)}\lrarrow
 \varinjlim\nolimits_{i\in I}H^{(\Xi_i)}\lrarrow D
$$
belongs to $\varinjlim\sS$.

 The final observation is that the epimorphism
$H=\coprod_{i\in I}H^{(\Xi_i)}\rarrow D$ factorizes through
the epimorphism $C\rarrow D$, essentially due to the construction
of the sets $\Xi_i$ in the beginning of this proof.
 Now we consider the pullback diagram
$$
 \xymatrix{
  & N \ar@{=}[r] \ar@{>->}[d] & N \ar@{>->}[d] \\
  M \ar@{>->}[r] \ar@{=}[d] & Q \ar@{->>}[r] \ar@{->>}[d]
  & C \ar@{->>}[d] \\
  M \ar@{>->}[r] & H \ar@{->>}[r] & D
 }
$$
where $Q$ is the pullback of the pair of epimorphisms $C\rarrow D$ and
$H\rarrow D$, while $N$ is the kernel of the morphism $C\rarrow D$.
 Since the epimorphism $H\rarrow D$ factorizes through
the epimorphism $C\rarrow D$, the short exact sequence $0\rarrow N
\rarrow Q\rarrow H\rarrow0$ splits.
 We have $M\in\varinjlim\sS$ and $C\in\varinjlim\sS$, so it follows
from the short exact sequence $0\rarrow M\rarrow Q\rarrow C\rarrow0$
that $Q\in\varinjlim\sS$.
 It remains to notice that the class $\varinjlim\sS$ is closed under
direct summands (since it is closed under direct limits) in~$\sK$.
 So $N\in\varinjlim\sS$ as $N$ is a direct summand of~$Q$.
\end{proof}

 We conclude the section by presenting formal proofs of
Propositions~A(i) and~B.

\begin{proof}[Proof of Proposition~A(i) from
Section~\ref{introd-theorem-A}]
 This is precisely the assertion of
Proposition~\ref{FP2-varinjlim-extension-closed}.
\hbadness=1800
\end{proof}

\begin{proof}[Proof of Proposition~B from
Section~\ref{introd-theorem-B}]
 Applying Proposition~\ref{varinjlim-kernel-closed} to the class
$\sS\cup\sT\subset\sK$, we see that the class $\varinjlim(\sS\cup\sT)$
is closed under coproducts, direct limits, extensions, and kernels
of epimorphisms in~$\sK$.
 So $\varinjlim(\sS\cup\sT)$ is precisely the class $\sC$ as
defined in the formulation of Theorem~B.
\end{proof}

\Section{Exact Categories of Grothendieck Type}
\label{exact-grothendieck-secn}

 In this section we recall some basic concepts of the theory of
\emph{efficient exact categories} and \emph{exact categories of
Grothendieck type}, as developed by Saor\'\i n and
\v St\!'ov\'\i\v cek~\cite{SaoSt,Sto-ICRA}.
 The exposition in \v St\!'ov\'\i\v cek's paper~\cite{Sto-ICRA} is
particularly convenient as a reference source for our purposes.

 Let $\sE$ be a category.
 By a \emph{well-ordered chain} (of morphisms) in $\sE$ one means
a direct system $(f_{\beta,\gamma}\:E_\gamma\to E_\beta)_
{0\le\gamma<\beta<\alpha}$ in $\sE$ indexed by an ordinal~$\alpha$.
 A well-ordered chain $(E_\beta)_{0\le\beta<\alpha}$ is said to be
\emph{smooth} if $E_\beta=\varinjlim_{\gamma<\beta}E_\gamma$ for
all limit ordinals $0<\beta<\alpha$.
 If the direct limit $E_\alpha=\varinjlim_{\beta<\alpha}E_\beta$ exists
in $\sE$, then the natural morphism $E_0\rarrow E_\alpha$ is said to be
the \emph{composition} of the smooth chain
$(E_\beta)_{0\le\beta<\alpha}$.
 The morphism $E_0\rarrow E_\alpha$ is also called the \emph{transfinite
composition} of the morphisms $E_\beta\rarrow E_{\beta+1}$, where
$0\le\beta<\alpha$.

 Let $\sE$ be a category, $\sD$ be a class of morphisms in $\sE$, and
$\kappa$~be a regular cardinal.
 An object $X\in\sE$ is said to be \emph{$\kappa$\+small relative
to\/~$\sD$} if, for any smooth chain $(E_\beta)_{0\le\beta<\alpha}$
indexed by an ordinal $\alpha$ of cofinality~$\ge\kappa$ with
the morphisms $E_\beta\rarrow E_{\beta+1}$ belonging to $\sD$ for
all $0\le\beta<\alpha$ and the direct limit $E_\alpha=
\varinjlim_{\beta<\alpha}E_\beta$, the induced map of sets
$$
 \varinjlim\nolimits_{\beta<\alpha}\Hom_\sE(X,E_\beta)\lrarrow
 \Hom_\sE(X,E_\alpha)
$$
is a bijection.
 An object $X\in\sE$ is called \emph{small relative to\/~$\sD$} if it
is $\kappa$\+small relative to $\sD$ for some regular cardinal~$\kappa$.

 The following definition is taken from~\cite[Definition~3.4]{Sto-ICRA}.
 The definition in~\cite[Proposition~2.6]{SaoSt} is slightly more
general (the result of~\cite[Proposition~2.7(2)]{SaoSt}
or~\cite[Proposition~5.3(2)]{Sto-ICRA} provides the comparison).
 An exact category $\sE$ is called \emph{efficient} if the following
conditions hold:
\begin{enumerate}
\renewcommand{\theenumi}{Ef\arabic{enumi}}
\setcounter{enumi}{-1}
\item $\sE$ is weakly idempotent-complete, i.~e., any pair of morphisms
$p\:X\rarrow Y$ and $i\:Y\rarrow X$ in $\sE$ with $pi=\id_Y$ arises
from a direct sum decomposition $X=Y\oplus Z$;
\item all transfinite compositions of admissible monomorphisms exist
in $\sE$, and the class of admissible monomorphisms is closed under
transfinite compositions;
\item every object of $\sE$ is small relative to the class of all
admissible monomorphisms;
\item the exact category $\sE$ has a generator, i.~e., there is
an object $G\in\sE$ such that every object $E\in\sE$ is the codomain
of an admissible epimorphism $G^{(I)}\rarrow E$ from a coproduct
$G^{(I)}$ of some set $I$ of copies of the object~$G$.
\end{enumerate}

 The definition of a \emph{filtration} or \emph{transfinitely iterated
extension} was already given in
Section~\ref{generalized-fp-projective-periodicity-I-secn} in
the context of Grothendieck abelian categories.
 It is generalized to exact categories in the following way.
 An $(\alpha+\nobreak1)$\+indexed smooth chain
$(E_\beta)_{0\le\beta\le\alpha}$ is said to be
an \emph{$\alpha$\+indexed filtration} (of the object~$E_\alpha$) if
$E_0=0$ and, for every ordinal $0\le\beta<\alpha$, the morphism
$E_\beta\rarrow E_{\beta+1}$ is an admissible monomorphism in~$\sE$.

 If this is the case, the object $E_\alpha$ is said to be
\emph{filtered by} the cokernels $S_\beta$ of the admissible
monomorphisms $E_\beta\rarrow E_{\beta+1}$.
 Alternatively, the object $E_\alpha$ is called a \emph{transfinitely
iterated extension} of the objects $(S_\beta)_{0\le\beta<\alpha}$.
 Given a class of objects $\sS\subset\sE$, the class of all objects
in $\sE$ filtered by (objects isomorphic to) objects from $\sS$
is denoted by $\Fil(\sS)\subset\sE$.

 The next definition is taken from~\cite[Definition~3.11]{Sto-ICRA}.
 An \emph{exact category of Grothendieck type} is an efficient exact
category $\sE$ satisfying the additional axiom
\begin{enumerate}
\item[(GT4)] the category $\sE$ is deconstructible in itself, i.~e.,
there exists a \emph{set} of objects $\sS\subset\sE$ such that
$\sE=\Fil(\sS)$.
\end{enumerate}

 The following result is important for our purposes.

\begin{thm} \label{grothendieck-type-enough-injectives}
 Any exact category of Grothendieck type has enough injective objects.
\end{thm}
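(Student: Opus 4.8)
The plan is to deduce the theorem from the Eklof--Trlifaj theorem for efficient exact categories (Theorem~\ref{eklof-trlifaj-efficient-exact} below), exploiting axiom~(GT4): since $\sE$ is deconstructible \emph{in itself}, it is the left-hand class of the complete cotorsion pair generated by a set, and the right-hand class of that pair turns out to consist precisely of the injective objects of~$\sE$.

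First I would fix, by~(GT4), a \emph{set} of objects $\sS\subset\sE$ with $\sE=\Fil(\sS)$; enlarging $\sS$ by one object if necessary, I may assume $\sS$ contains a generator of the exact category~$\sE$, so that $\sS$ is generating (and $\Fil(\sS)$ is still all of $\sE$). Let $(\sA,\sB)$ be the cotorsion pair generated by $\sS$ in $\sE$, that is, $\sB=\sS^{\perp_1}$ and $\sA={}^{\perp_1}\sB$. Every object of $\sS$ lies in ${}^{\perp_1}(\sS^{\perp_1})=\sA$, so $\Fil(\sS)\subseteq\Fil(\sA)$; and by the Eklof lemma (Lemma~\ref{eklof-lemma}, in its exact-category formulation) the class $\sA={}^{\perp_1}\sB$ is closed under transfinitely iterated extensions, i.e.\ $\Fil(\sA)=\sA$. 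Hence $\sE=\Fil(\sS)\subseteq\sA$, and since trivially $\sA\subseteq\sE$ we obtain $\sA=\sE$. Therefore $\sB=\sA^{\perp_1}=\sE^{\perp_1}$ is exactly the class of objects $J\in\sE$ with $\Ext^1_\sE(E,J)=0$ for all $E\in\sE$, which is to say the class of injective objects of~$\sE$.

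Next I would invoke completeness. The exact category $\sE$ is efficient (axioms~(Ef0)--(Ef3)), the class $\sS$ is a set, and the left class $\sA=\sE$ is generating, so Theorem~\ref{eklof-trlifaj-efficient-exact} applies and the cotorsion pair $(\sA,\sB)$ is complete. In particular, every object $X\in\sE$ fits into a special preenvelope sequence~\eqref{spec-preenvelope-sequence}, i.e.\ an admissible short exact sequence $0\to X\to B\to A'\to0$ with $B\in\sB$ and $A'\in\sA$. Here $X\to B$ is an admissible monomorphism by construction, and $B$ is injective by the identification of $\sB$ in the previous paragraph; this is precisely what it means for $\sE$ to have enough injective objects.

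The only genuinely substantial ingredient is the completeness statement itself, i.e.\ the transfinite small-object argument underlying Theorem~\ref{eklof-trlifaj-efficient-exact}, where one uses smallness of every object relative to admissible monomorphisms~(Ef2), closure of admissible monomorphisms under transfinite composition~(Ef1), weak idempotent completeness~(Ef0), the existence of a generator~(Ef3), and crucially that $\sS$ may be taken to be a \emph{set}~(GT4). Granting that input, the reduction above---identifying the right half of the cotorsion pair generated by a deconstructibility set with the injectives by means of the Eklof lemma---is routine, and I do not foresee any further obstacle.
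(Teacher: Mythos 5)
Your argument is correct. Note first that the paper itself gives no proof of this theorem: it simply cites \cite[Corollary~5.9]{Sto-ICRA}. What you have written is a valid reconstruction of the standard argument from the results the paper does state (also by citation), and the logic is sound at every step: by (GT4) the deconstructibility class $\sS$ is a set; $\sS\subseteq{}^{\perp_1}(\sS^{\perp_1})=\sA$ together with the exact-category form of the Eklof lemma (Lemma~\ref{eklof-lemma}) gives $\sE=\Fil(\sS)\subseteq\Fil(\sA)=\sA$, hence $\sA=\sE$ and $\sB=\sE^{\perp_1}$ is exactly the class of injectives (the identification of injectivity with $\Ext^1_\sE({-},J)=0$ uses only that pushouts along admissible monomorphisms exist, which holds in any exact category); and Theorem~\ref{eklof-trlifaj-efficient-exact}(a) then yields the special preenvelope $0\to X\to B\to A'\to 0$ with $B$ injective. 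Two small remarks. First, there is no circularity: the completeness statement of Theorem~\ref{eklof-trlifaj-efficient-exact} is proved by the generalized small object argument, which relies on (Ef0)--(Ef3) and the fact that $\sS$ is a set, not on any prior supply of injectives; indeed this is exactly how \cite{Sto-ICRA} obtains its Corollary~5.9. Second, your enlargement of $\sS$ by a generator is harmless but unnecessary: what Theorem~\ref{eklof-trlifaj-efficient-exact}(a) requires is that the class $\sA$ be generating, and since you have already shown $\sA=\sE$, this is automatic from (Ef3) (every object is an admissible epimorphic image of itself).
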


\begin{proof}
 This is~\cite[Corollary~5.9]{Sto-ICRA}.
\end{proof}

 The next lemma is an exact category version of~\cite[Lemma~1.4]{BHP}.

\begin{lem} \label{exact-category-hereditarily-generated}
 Let\/ $\sE$ be an exact category and\/ $\sT\subset\sE$ be a class
of objects.
 Put\/ $\sB=\sT^{\perp_{\ge1}}$, and assume that every object of\/
$\sE$ is an admissible subobject of an object from\/~$\sB$
(in particular, this holds if there are enough injective objects
in\/~$\sE$).
 Then \par
\textup{(a)} ${}^{\perp_1}\sB={}^{\perp_{\ge1}}\sB\subset\sE$; \par
\textup{(b)} if the class\/ $\sA={}^{\perp_1}\sB={}^{\perp_{\ge1}}\sB$
is generating in\/ $\sE$, then $(\sA,\sB)$ is a hereditary cotorsion
pair in\/~$\sE$ (as defined in
Section~\ref{generalized-cotorsion-periodicity-secn}).
\end{lem}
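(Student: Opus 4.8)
The plan is to reduce the statement to routine manipulations with the class $\sB=\sT^{\perp_{\ge1}}$ and the equivalence of conditions \textup{(1)--(4)} for hereditary cotorsion pairs recalled in Section~\ref{generalized-cotorsion-periodicity-secn}. First I would record two elementary closure properties of $\sB$. Applying the long exact cohomology sequence of $\Ext_\sE^\bullet(T,-)$ to an admissible short exact sequence whose two outer terms lie in $\sB$, one reads off at once that $\sB$ is closed under extensions and under cokernels of admissible monomorphisms in $\sE$. The hypothesis that every object of $\sE$ is an admissible subobject of an object from $\sB$ says precisely that $\sB$ is cogenerating; combining it with the closure of $\sB$ under cokernels of admissible monomorphisms, one obtains, for every object $X\in\sE$, an admissible coresolution $0\rarrow X\rarrow B^0\rarrow B^1\rarrow\dotsb$ in which all terms $B^i$ and all cosyzygies lie in $\sB$.

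For part~\textup{(a)}, the inclusion ${}^{\perp_{\ge1}}\sB\subseteq{}^{\perp_1}\sB$ is trivial, so the work is in the reverse inclusion. Fixing $A\in{}^{\perp_1}\sB$ and $B\in\sB$, I would dimension-shift along a $\sB$-coresolution $0\rarrow B\rarrow B^0\rarrow B^1\rarrow\dotsb$ of $B$ with cosyzygies $C^i\in\sB$ (and $C^{-1}:=B$), using the admissible short exact sequences $0\rarrow C^{i-1}\rarrow B^i\rarrow C^i\rarrow0$ and the long exact sequences of $\Ext_\sE^\bullet(A,-)$: since $\Ext_\sE^1(A,-)$ vanishes on all of $\sB$ (as $A\in{}^{\perp_1}\sB$), the connecting homomorphisms should let one move the computation of $\Ext_\sE^{n+1}(A,B)$ step by step through $\Ext_\sE^n(A,C^0)$, $\Ext_\sE^{n-1}(A,C^1)$, $\dotsc$, down to $\Ext_\sE^1(A,C^{n-1})=0$. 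This is the exact-category transcription of the argument for \cite[Lemma~1.4(a)]{BHP}.

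For part~\textup{(b)}, assume in addition that $\sA={}^{\perp_1}\sB={}^{\perp_{\ge1}}\sB$ is generating. I would first note that $\sA$ is closed under kernels of admissible epimorphisms in $\sE$: for an admissible short exact sequence $0\rarrow K\rarrow A\rarrow A''\rarrow0$ with $A$, $A''\in\sA$ and any $B\in\sB$, the long exact sequence of $\Ext_\sE^\bullet(-,B)$ squeezes $\Ext_\sE^n(K,B)$ between $\Ext_\sE^n(A,B)=0$ and $\Ext_\sE^{n+1}(A'',B)=0$ for every $n\ge1$, whence $K\in{}^{\perp_{\ge1}}\sB=\sA$ by part~\textup{(a)}. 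Next I would verify that $(\sA,\sB)$ is a cotorsion pair: the inclusion $\sB\subseteq\sA^{\perp_1}$ is immediate, and for the converse one takes $X\in\sA^{\perp_1}$, observes that $\sT\subseteq{}^{\perp_{\ge1}}\sB=\sA$, and dimension-shifts $\Ext_\sE^n(T,X)$ (for $T\in\sT$ and $n\ge1$) down to vanishing $\Ext_\sE^1(A_i,X)$-terms along admissible resolutions $\dotsb\rarrow A_1\rarrow A_0\rarrow T\rarrow0$ with $A_i\in\sA$ and syzygies in $\sA$ (available since $\sA$ is generating and, by the previous sentence, closed under kernels of admissible epimorphisms). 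Finally, $\sA$ is generating, $\sB$ is cogenerating, and $\Ext_\sE^2(\sA,\sB)=0$ because $\sA={}^{\perp_{\ge1}}\sB$; so condition~\textup{(3)} of the equivalence in Section~\ref{generalized-cotorsion-periodicity-secn} holds and $(\sA,\sB)$ is hereditary, which finishes the proof.

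The step I expect to be the real obstacle is the dimension-shifting in part~\textup{(a)} (and the parallel shift inside part~\textup{(b)}). When $\sE$ genuinely has enough injective objects it is routine, for then one may take the coresolution terms $B^i$ to be injective, hence automatically $\Ext_\sE^n(A,-)$-acyclic for $n\ge1$, and the peeling-off of cosyzygies is immediate. Under the weaker standing hypothesis --- that every object merely admits an admissible monomorphism into \emph{some} object of $\sB$ --- the terms $B^i$ need not be acyclic for $\Ext_\sE^n(A,-)$ with $n\ge2$, so a naive induction on the cohomological degree stalls; one has to argue more carefully, e.g.\ by building the coresolutions adapted to the functor $\Ext_\sE^\bullet(A,-)$, or, where applicable, by invoking Theorem~\ref{grothendieck-type-enough-injectives} to reduce to the case of genuine injectives. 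Handling this correctly is where the content of the lemma lies; everything else is bookkeeping with long exact sequences and the cited equivalence for hereditary cotorsion pairs.
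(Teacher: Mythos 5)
Your preliminary reductions are sound: the closure of $\sB=\sT^{\perp_{\ge1}}$ under cokernels of admissible monomorphisms, the closure of $\sA$ under kernels of admissible epimorphisms, and the reduction of part~(b) to the cotorsion-pair identity $\sA^{\perp_1}=\sB$ plus condition~(3)/(4) of Section~\ref{generalized-cotorsion-periodicity-secn} are all correct bookkeeping. But the central step --- the inclusion ${}^{\perp_1}\sB\subseteq{}^{\perp_{\ge1}}\sB$ in part~(a), and the mirror-image inclusion $\sA^{\perp_1}\subseteq\sB$ inside part~(b) --- is precisely where your argument has a hole, and you flag it yourself in the last paragraph without closing it. The long exact sequence of $0\rarrow C^{i-1}\rarrow B^i\rarrow C^i\rarrow0$ only gives the segment $\Ext^n_\sE(A,C^i)\rarrow\Ext^{n+1}_\sE(A,C^{i-1})\rarrow\Ext^{n+1}_\sE(A,B^i)$; since $B^i\in\sB$ is not known to satisfy $\Ext^m_\sE(A,B^i)=0$ for $m\ge2$, the vanishing of $\Ext^n_\sE(A,C^i)$ yields only an injection $\Ext^{n+1}_\sE(A,C^{i-1})\hookrightarrow\Ext^{n+1}_\sE(A,B^i)$, and iterating produces an unending chain of injections that never reaches $0$. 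Retreating to genuine injectives is not an option in the stated generality: the lemma, and Proposition~\ref{efficient-exact-hereditarily-generated} which relies on it, assume only that $\sB$ is cogenerating. (A small additional slip: for $X\notin\sB$ the zeroth cosyzygy $B^0/X$ of your coresolution need not lie in $\sB$; the cosyzygies lie in $\sB$ only once $X$ itself does.)

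The missing idea --- which is what the argument of \cite[Lemma~1.4]{BHP}, invoked by the paper, actually runs on --- is a Yoneda-product factorization in place of a coresolution. By induction on~$n$, suppose $\Ext^n_\sE(A,{-})$ vanishes on $\sB$, and write a class $\xi\in\Ext^{n+1}_\sE(A,B)$ as the composite of some $\alpha\in\Ext^n_\sE(A,Z)$ with the class $\beta\in\Ext^1_\sE(Z,B)$ of an admissible short exact sequence $0\rarrow B\rarrow E\rarrow Z\rarrow0$. Choose an admissible monomorphism $E\rarrow B'$ with $B'\in\sB$ (the cogeneration hypothesis is applied to the middle term $E$, not to~$B$). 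Then $B'/B\in\sB$ by closure under cokernels of admissible monomorphisms, the sequence $0\rarrow B\rarrow E\rarrow Z\rarrow0$ is a pullback of $0\rarrow B\rarrow B'\rarrow B'/B\rarrow0$ along the induced morphism $Z\rarrow B'/B$, hence $\beta$ is the restriction of a class in $\Ext^1_\sE(B'/B,B)$ and $\xi$ factors through $\Ext^n_\sE(A,B'/B)=0$. The dual manipulation --- an admissible epimorphism from an object of $\sA$ onto the middle term of a representing sequence, whose kernel lies in $\sA$ by the kernel-closure you already established --- proves $\Ext^{\ge1}_\sE(\sT,X)=0$ for $X\in\sA^{\perp_1}$ and completes part~(b); the remainder of your part~(b) then goes through as you describe.
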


\begin{proof}
 The argument from~\cite[Lemma~1.4]{BHP} applies.
 All the injective objects of $\sE$ always belong to $\sB$; so if
there are enough such injective objects, then every object of $\sE$
is an admissible subobject of an object from~$\sB$.
 In part~(b), it is helpful to keep in mind that the class
${}^{\perp_1}\sB$ is closed under coproducts in $\sE$ for any class
$\sB\subset\sE$ \,\cite[Corollary 8.3]{CoFu},
\cite[Corollary A.2]{CoSt}.
 So the conditions that any object of $\sE$ is an admissible
epimorphic image of an object from $\sA$ and that it is an admissible
epimorphic image of a coproduct of objects from $\sA$ are equivalent.
\end{proof}

 The following version of the Eklof--Trlifaj theorem for efficient
exact categories was obtained in the papers~\cite{SaoSt,Sto-ICRA}.

\begin{thm} \label{eklof-trlifaj-efficient-exact}
 Let\/ $\sE$ be an efficient exact category and $(\sA,\sB)$ be
the cotorsion pair generated by a set of objects\/ $\sS\subset\sE$.
 Then \par
\textup{(a)} If the class\/ $\sA$ is generating in\/ $\sE$,
then the cotorsion pair $(\sA,\sB)$ is complete. \par
\textup{(b)} If the class\/ $\Fil(\sS)$ is generating in\/ $\sE$,
then\/ $\sA=\Fil(\sS)^\oplus$.
\end{thm}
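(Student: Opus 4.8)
The plan is to carry out, in the setting of an efficient exact category, the classical transfinite construction of Eklof and Trlifaj, in the form worked out by Saor\'\i n and \v St\!'ov\'\i\v cek~\cite{SaoSt,Sto-ICRA}; so the argument parallels that of Theorem~\ref{eklof-trlifaj-theorem}. First I would dispose of the easy inclusion needed in part~(b): since $\sB=\sS^{\perp_1}$ we have $\sS\subseteq{}^{\perp_1}\sB=\sA$, and by the Eklof lemma (Lemma~\ref{eklof-lemma}) the class ${}^{\perp_1}\sB$ is closed under transfinitely iterated extensions, so $\Fil(\sS)\subseteq\sA$; as the left-hand class of a cotorsion pair is closed under direct summands (a direct summand of an object with vanishing $\Ext^1$ into $\sB$ again has vanishing $\Ext^1$ into $\sB$), this gives $\Fil(\sS)^\oplus\subseteq\sA$. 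I would also record the standing closure properties that the rest of the proof uses: $\sA={}^{\perp_1}\sB$ is closed under extensions (immediate from the long exact sequence of $\Ext$) and under coproducts~\cite[Corollary~8.3]{CoFu}, and $\Fil(\sS)$ is closed under extensions (concatenation of filtrations) and under coproducts.

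The heart of part~(a) is the construction of the special preenvelope sequences~\eqref{spec-preenvelope-sequence}. Given $K\in\sE$, I would build a smooth chain of admissible monomorphisms $K=M_0\rarrow M_1\rarrow\dotsb\rarrow M_\beta\rarrow\dotsb$ in which $M_{\beta+1}$ is the pushout of $M_\beta$ along a coproduct of representatives of all extensions $0\rarrow M_\beta\rarrow E\rarrow S\rarrow0$ with $S\in\sS$, so that each cokernel $M_{\beta+1}/M_\beta$ is a coproduct of copies of objects of $\sS$ and hence lies in $\Fil(\sS)$; at limit ordinals one takes transfinite composition, which is again an admissible monomorphism by axiom~(Ef1). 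The cokernel of $K\rarrow M_\alpha$ is then $\sS$\+filtered, hence in $\Fil(\sS)$. The substance of the argument is that for a sufficiently large ordinal $\alpha$ --- of cofinality at least a regular cardinal $\kappa$ for which every object of $\sS$ is $\kappa$\+small relative to the admissible monomorphisms, which is where axiom~(Ef2) enters --- one has $\Ext^1_\sE(S,M_\alpha)=0$ for all $S\in\sS$, so that $B:=M_\alpha\in\sB$ and $0\rarrow K\rarrow B\rarrow B/K\rarrow0$ is the desired sequence. This rests on a compatibility statement between Yoneda $\Ext^1$ in an exact category and transfinite compositions of admissible monomorphisms; in a Grothendieck abelian category this is just exactness of direct limits, whereas in a general efficient exact category it must be extracted from the smallness axiom. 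This extraction is the main obstacle, and at this point I would simply invoke the relevant results of~\cite{SaoSt} and~\cite[\S5]{Sto-ICRA}.

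From special preenvelopes I would pass to special precover sequences~\eqref{spec-precover-sequence} by Salce's lemma. Given $K\in\sE$, use that $\sA$ is generating (and closed under coproducts) to pick an admissible epimorphism $A\rarrow K$ with $A\in\sA$, and let $N$ be its kernel. Applying the preenvelope construction of the previous paragraph to $N$ yields $0\rarrow N\rarrow B'\rarrow A''\rarrow0$ with $B'\in\sB$ and $A''\in\Fil(\sS)\subseteq\sA$. Forming the pushout $P$ of $A\larrow N\rarrow B'$ produces short exact sequences $0\rarrow B'\rarrow P\rarrow K\rarrow0$ and $0\rarrow A\rarrow P\rarrow A''\rarrow0$; the latter shows $P\in\sA$ by extension closure, and the former is the required special precover sequence. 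This proves~(a).

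For part~(b), assume $\Fil(\sS)$ is generating; then so is $\sA\supseteq\Fil(\sS)$, and part~(a) applies. Given $A\in\sA$, choose an admissible epimorphism $F_0\rarrow A$ with $F_0\in\Fil(\sS)$ and kernel $N$, and run the preenvelope construction on $N$ to get $0\rarrow N\rarrow B'\rarrow A''\rarrow0$ with $B'\in\sB$ and $A''\in\Fil(\sS)$. The pushout $P$ of $F_0\larrow N\rarrow B'$ then sits in short exact sequences $0\rarrow B'\rarrow P\rarrow A\rarrow0$ and $0\rarrow F_0\rarrow P\rarrow A''\rarrow0$; the latter, together with the closure of $\Fil(\sS)$ under extensions, gives $P\in\Fil(\sS)$. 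Since $A\in\sA$ and $B'\in\sB$, the sequence $0\rarrow B'\rarrow P\rarrow A\rarrow0$ splits, so $A$ is a direct summand of $P\in\Fil(\sS)$, i.e.\ $A\in\Fil(\sS)^\oplus$. Combined with $\Fil(\sS)^\oplus\subseteq\sA$ from the first paragraph, this yields $\sA=\Fil(\sS)^\oplus$.
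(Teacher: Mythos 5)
Your argument is correct and is essentially the proof of the cited results: the paper itself disposes of this theorem by referring to \cite[Corollary~2.15]{SaoSt} and \cite[Theorem~5.16]{Sto-ICRA}, and what you write out is precisely the Eklof--Trlifaj transfinite construction plus Salce's lemma plus the pushout/splitting trick that those references carry out in the efficient exact setting. You also correctly isolate the one genuinely delicate point --- the compatibility of $\Ext^1_\sE$ with transfinite compositions of admissible monomorphisms, extracted from the smallness axiom~(Ef2) --- and defer it to the same sources, which is exactly where the paper places the burden as well.
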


\begin{proof}
 This is~\cite[Corollary~2.15]{SaoSt} or~\cite[Theorem~5.16]{Sto-ICRA}.
\end{proof}

 The next proposition is the efficient exact category version
of~\cite[Proposition~1.5]{BHP}.

\begin{prop} \label{efficient-exact-hereditarily-generated}
 Let\/ $\sE$ be an efficient exact category and\/ $\sT\subset\sE$ be
a set of objects.
 Put\/ $\sB=\sT^{\perp_{\ge1}}$, and assume that the class\/ $\sB$
is cogenerating in\/~$\sE$ (in particular, this holds if\/ $\sE$ is
an exact category of Grothendieck type).
 Put\/ $\sA={}^{\perp_1}\sB= {}^{\perp_{\ge1}}\sB$, as per
Lemma~\ref{exact-category-hereditarily-generated}, and assume
that the class\/ $\sA$ is generating in\/~$\sE$.
 Then $(\sA,\sB)$ is a hereditary complete cotorsion pair in\/ $\sE$
generated by a certain set of objects\/~$\sS$.
\end{prop}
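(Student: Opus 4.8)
The plan is to reduce the claim to the Eklof--Trlifaj theorem for efficient exact categories, Theorem~\ref{eklof-trlifaj-efficient-exact}(a), by exhibiting an explicit \emph{set} of objects $\sS\subset\sE$ with $\sS^{\perp_1}=\sB$. Once this is done, the cotorsion pair generated by $\sS$ is $({}^{\perp_1}\sB,\sB)=(\sA,\sB)$, and, since $\sA$ is assumed generating, Theorem~\ref{eklof-trlifaj-efficient-exact}(a) gives completeness. The ``hereditary'' assertion, by contrast, requires no new work: the class $\sB=\sT^{\perp_{\ge1}}$ is closed under the products existing in $\sE$, so the cogenerating hypothesis implies that every object of $\sE$ is an admissible subobject of an object from $\sB$; hence Lemma~\ref{exact-category-hereditarily-generated} applies, its part~(a) yielding ${}^{\perp_1}\sB={}^{\perp_{\ge1}}\sB$ (which is what the notation $\sA={}^{\perp_1}\sB={}^{\perp_{\ge1}}\sB$ abbreviates), and its part~(b), together with the hypothesis that $\sA$ is generating, giving that $(\sA,\sB)$ is a hereditary cotorsion pair.

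The substance of the argument lies in constructing $\sS$. First I would note that, because $\sA$ is generating and (being the left-hand class of a cotorsion pair) closed under the coproducts existing in $\sE$, every object of $\sE$ is the codomain of an admissible epimorphism from an object of $\sA$. Applying this repeatedly to each $T\in\sT$ produces an ``$\sA$\+resolution'' of $T$, namely a family of admissible short exact sequences $0\rarrow K^{(T)}_{n+1}\rarrow A^{(T)}_n\rarrow K^{(T)}_n\rarrow0$ for $n\ge0$, with $A^{(T)}_n\in\sA$ and $K^{(T)}_0=T$. I then put $\sS=\{\,K^{(T)}_n\mid T\in\sT,\ n\ge0\,\}$; this is genuinely a set, since $\sT$ is one and only countably many choices are involved for each $T\in\sT$.

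The link between $\sS$ and $\sB$ comes from a dimension shift. Since $A^{(T)}_n\in\sA={}^{\perp_{\ge1}}\sB$, we have $\Ext^m_\sE(A^{(T)}_n,B)=0$ for all $m\ge1$ and all $B\in\sB$, so the long exact sequences of $\Ext_\sE$ attached to the short exact sequences above give
\[
 \Ext^1_\sE(K^{(T)}_n,B)\;\cong\;\Ext^2_\sE(K^{(T)}_{n-1},B)\;\cong\;\dots\;\cong\;\Ext^{n+1}_\sE(T,B)
\]
for every $B\in\sB$. Reading these isomorphisms in both directions, one sees that an object $B\in\sE$ satisfies $\Ext^1_\sE(S,B)=0$ for all $S\in\sS$ if and only if $\Ext^m_\sE(T,B)=0$ for all $T\in\sT$ and all $m\ge1$; in other words $\sS^{\perp_1}=\sT^{\perp_{\ge1}}=\sB$, as required. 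This completes the reduction described in the first paragraph.

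The step I expect to be the main obstacle is the construction of the $\sA$\+resolutions: one has to ensure that the chosen epimorphisms and their kernels are admissible, so that the $\Ext$ long exact sequences are legitimately available in the exact category $\sE$, and one must be a little careful to keep $\sS$ a set rather than a proper class. The remaining ingredients — the dimension-shift computation and the closure of the relevant $\perp$\+classes under products and coproducts — are routine given the facts already recalled in the paper (in particular \cite[Corollary~8.3]{CoFu} and \cite[Corollary~A.2]{CoSt}).
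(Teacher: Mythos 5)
Your overall reduction --- produce a set $\sS$ with $\sS^{\perp_1}=\sT^{\perp_{\ge1}}$ and then invoke Lemma~\ref{exact-category-hereditarily-generated} and Theorem~\ref{eklof-trlifaj-efficient-exact} --- is exactly the strategy of the paper, and your framing paragraphs (hereditariness, closure of the $\perp$\+classes under (co)products, admissibility of the kernels in the resolutions) are fine. But the core step has a genuine gap, and it is not the one you flag. The dimension-shift isomorphisms
$$
\Ext^1_\sE(K^{(T)}_n,X)\;\cong\;\Ext^2_\sE(K^{(T)}_{n-1},X)\;\cong\;\dotsb\;\cong\;\Ext^{n+1}_\sE(T,X)
$$
require $\Ext^m_\sE(A^{(T)}_j,X)=0$ for $m\ge1$, and the objects $A^{(T)}_j$ lie in $\sA={}^{\perp_{\ge1}}\sB$, which guarantees this vanishing only for $X\in\sB$. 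So your computation does prove the inclusion $\sB\subset\sS^{\perp_1}$, but in the reverse direction --- the one you actually need --- you start from an arbitrary $X$ with $\Ext^1_\sE(K^{(T)}_n,X)=0$ and have no control over $\Ext^{\ge2}_\sE(A^{(T)}_j,X)$. Already for the first shift, the exact sequence $\Ext^1_\sE(K^{(T)}_1,X)\rarrow\Ext^2_\sE(T,X)\rarrow\Ext^2_\sE(A^{(T)}_0,X)$ only shows that $\Ext^2_\sE(T,X)$ injects into $\Ext^2_\sE(A^{(T)}_0,X)$, which you cannot kill; assuming it vanishes amounts to assuming $X\in\sB$, which is circular. (In the classical module-theoretic version of this argument the issue is invisible because the resolution is by projectives, for which $\Ext^{\ge1}_\sE({-},X)$ vanishes for \emph{every}~$X$.)

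The paper closes this gap by running the induction on Yoneda extension classes rather than along one fixed resolution. For each relevant object $S\in\sA$ it takes the \emph{set} $\sJ_S$ of admissible monomorphisms supplied by \cite[Proposition~5.3]{Sto-ICRA}, with the property that every short exact sequence $0\rarrow Z\rarrow Y\rarrow S\rarrow0$ is a pushout of one indexed by $\sJ_S$; covering each of those middle terms by an admissible epimorphism from an object of $\sA$ yields a set $\sS'$ of kernels such that every class in $\Ext^n_\sE(S,X)$ factors through some $\Ext^{n-1}_\sE(S',X)$ with $S'\in\sS'$. This gives the implication ``$\Ext^{n-1}_\sE(S',X)=0$ for all $S'\in\sS'$ implies $\Ext^n_\sE(S,X)=0$'' for \emph{arbitrary} $X$, with no hidden hypothesis on~$X$. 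To repair your argument you would need this extra ingredient (a solution-set condition for the extensions of each~$S$, iterated over the objects produced at each stage), not merely one syzygy sequence per object of~$\sT$.
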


\begin{proof}
 Recall that if $\sE$ is of Grothendieck type, then there are enough
injective objects in $\sE$ by
Theorem~\ref{grothendieck-type-enough-injectives};
so $\sB$ is cogenerating in~$\sE$.
 In view of Lemma~\ref{exact-category-hereditarily-generated} and
Theorem~\ref{eklof-trlifaj-efficient-exact}, we only need to construct
a set of objects $\sS\subset\sE$ such that $\sS^{\perp_1}=
\sT^{\perp_{\ge1}}$.
 Clearly, we have $\sT\subset\sA$.
 Arguing by induction similarly to~\cite[proof of
Proposition~1.5]{BHP}, it suffices to show that for every object
$S\in\sA$ and an integer $n\ge2$ there exists a set of objects
$\sS'\subset\sA$ such that for any given $X\in\sE$ one has
$\Ext^n_\sE(S,X)=0$ whenever $\Ext^{n-1}_\sE(S',X)=0$ for
all $S'\in\sS'$.

 Let $G\in\sE$ be a generator of the exact category $\sE$, as in
condition~(Ef3).
 The result of~\cite[Proposition~5.3]{Sto-ICRA} provides, for any
given object $S\in\sE$, a set $\sJ_S$ of admissible monomorphisms
in $\sE$ satisfying the following two conditions:
\begin{enumerate}
\item every morphism $j\in\sJ_S$ fits into a short exact sequence
$0\rarrow E\overset j\rarrow G^{(I)}\rarrow S\rarrow0$ in $\sE$ for
some set~$I$;
\item for every short exact sequence $0\rarrow Z\rarrow Y\rarrow S
\rarrow0$ in $\sE$ there exists a morphism of short exact sequences
$(0\to E\to G^{(I)}\to S\to0)\rarrow(0\to Z\to Y\to S\to0)$ in $\sE$
such that the admissible monomorphism $E\rarrow G^{(I)}$ belongs to
$\sJ_S$, while $S\rarrow S$ is the identity morphism.
\end{enumerate}

 Let $\sJ_S$ be a set of admissible monomorphisms satisfying
conditions~(1\+-2) for the object $S\in\sE$.
 For every short exact sequence $0\rarrow E\overset j\rarrow H
\rarrow S\rarrow0$ in $\sE$ with $j\in\sJ_S$, choose an admissible
epimorphism $A\rarrow H$ onto $H$ from an object $A\in\sA$ (cf.\
the proof of Lemma~\ref{exact-category-hereditarily-generated}), and
set $S'$ to be the kernel of the composition $A\rarrow H\rarrow S$.
 Then one has $S'\in\sA$, since $\sA$ is closed under the kernels of
admissible epimorphisms (as the cotorsion pair $(\sA,\sB$) in $\sE$
is hereditary).

 Let $\sS'$ be the set of all objects $S'$ obtained in this way.
 For any Yoneda extension class $\xi\in\Ext^n_\sE(S,X)$, there
exists a short exact sequence $0\rarrow Z\rarrow Y\rarrow S\rarrow0$
in $\sE$ such that the class~$\xi$ is the composition of the class in
$\Ext^1$ represented by this short exact sequence with some Yoneda
extension class $\eta\in\Ext_\sE^{n-1}(Z,X)$.
 By construction, any short exact sequence $0\rarrow Z\rarrow Y
\rarrow S\rarrow0$ in $\sE$ is a pushout of a short exact sequence
$0\rarrow E\overset j\rarrow H\rarrow S\rarrow0$ with $j\in\sJ_S$,
which in turn is a pushout of a short exact sequence
$0\rarrow S'\rarrow A\rarrow S\rarrow0$ with $A\in\sA$ and $S'\in\sS'$.
 It follows easily that $\Ext_\sE^{n-1}(S',X)=0$ for all $S'\in\sS'$
implies $\Ext_\sE^n(S,X)=0$.
\end{proof}

 The next theorem of \v St\!'ov\'\i\v cek plays a key role in
our proof of Theorem~A.

\begin{thm} \label{deconstructible-class-is-grothendieck-type}
 Let\/ $\sK$ be a Grothendieck abelian category and\/ $\sE\subset\sK$
be a deconstructible class of objects (as defined in
Section~\ref{generalized-fp-projective-periodicity-I-secn}).
 Assume additionally that the full subcategory\/ $\sE$ is closed under
direct summands in\/~$\sK$.
 Then the category\/ $\sE$, endowed with the exact category structure
inherited from the abelian exact structure of\/ $\sK$, is an exact
category of Grothendieck type.
\end{thm}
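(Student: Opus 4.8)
The plan is to verify, one by one, the four axioms (Ef0)--(Ef3) making $\sE$ (with the exact structure inherited from $\sK$) an efficient exact category, and then to observe that axiom~(GT4) is nothing but the hypothesis; the whole argument goes back to Saor\'\i n and \v St\!'ov\'\i\v cek, and its decisive step can be quoted from \cite{SaoSt,Sto-ICRA}.

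First I would record a few preliminary facts. Since $\sE$ is deconstructible, say $\sE=\Fil(\sT)$ for a \emph{set} of objects $\sT$, it is closed under transfinitely iterated extensions (because $\Fil(\Fil(\sT))=\Fil(\sT)$), hence in particular under extensions --- so the inherited exact structure really is a Quillen exact structure --- and it is also closed under set-indexed coproducts, by concatenating filtrations. By hypothesis $\sE$ is closed under direct summands in $\sK$, and note $\sT\subset\sE$. Axiom~(GT4) is precisely the assertion that $\sE$ is deconstructible in itself, so it holds.

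Axioms (Ef0)--(Ef2) are then routine. For (Ef0): the abelian category $\sK$ is idempotent-complete and $\sE$ is closed under direct summands, so every idempotent in $\sE$ splits within $\sE$. For (Ef1): in a Grothendieck category directed colimits are exact and the colimit of a smooth chain of monomorphisms is a monomorphism; thus, given a smooth chain $(E_\beta)_{\beta<\alpha}$ in $\sE$ whose morphisms $E_\beta\rarrow E_{\beta+1}$ are admissible monomorphisms of $\sE$, the colimit $E_\alpha$ formed in $\sK$ receives a monomorphism from $E_0$ (being a transfinite composition of monomorphisms), and $E_\alpha/E_0=\varinjlim_\beta E_\beta/E_0$ is a transfinitely iterated extension of the cokernels $S_\beta=E_{\beta+1}/E_\beta\in\sE$; hence $E_\alpha$, being an extension of an object of $\sE$ by an object of $\sE$, lies in $\sE$, it is also the colimit in the full subcategory $\sE$, the transfinite composition $E_0\rarrow E_\alpha$ is an admissible monomorphism of $\sE$, and the admissible monomorphisms of $\sE$ are closed under transfinite composition. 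For (Ef2): $\sK$, being Grothendieck, is locally presentable \cite{AR}, so each of its objects is $\kappa$\+presentable for some regular cardinal $\kappa$, hence $\kappa$\+small relative to all morphisms of $\sK$; since $\Hom_\sE$ agrees with $\Hom_\sK$ and, by the computation just made, colimits in $\sE$ of smooth chains of admissible monomorphisms coincide with those in $\sK$, every object of $\sE$ is small relative to the admissible monomorphisms of $\sE$.

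The one substantive point is (Ef3): that the exact category $\sE$ has a generator. One must produce an object $G\in\sE$ such that every $E\in\sE$ is the target of an admissible epimorphism $G^{(I)}\rarrow E$ of $\sE$ --- i.e.\ of an epimorphism in $\sK$ whose kernel again lies in $\sE$ --- and the natural candidate is a suitable coproduct of objects from $\sT$ (or of small objects of $\sE$). This is exactly where the assumption that $\sE$ be closed under direct summands is indispensable, and where the Hill lemma (\cite[Theorem~7.10]{GT}) enters: it is used to refine $\sT$\+filtrations of $E$ so that the kernels occurring in the construction of $G^{(I)}\rarrow E$ can be kept inside $\sE$, while closure under direct summands serves to split off the redundant part of the domain. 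I expect this step to be the main obstacle in a fully self-contained proof; in the present setting it is enough to invoke the corresponding structural results on efficient exact categories in \cite{SaoSt,Sto-ICRA}, which is the form in which the theorem is attributed to \v St\!'ov\'\i\v cek. Once (Ef0)--(Ef3) and (GT4) are in place, $\sE$ is an exact category of Grothendieck type.
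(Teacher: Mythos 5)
Your proposal is correct and takes essentially the same route as the paper, whose entire proof of this theorem is the citation of \v St\!'ov\'\i\v cek's \cite[Theorem~3.16]{Sto-ICRA}; your verifications of (Ef0)--(Ef2) and (GT4) are sound, and you defer to the same source for the one substantive axiom (Ef3), the existence of a generator. (A small caveat: the direct-summand hypothesis is what secures (Ef0), while the generator is produced by the Hill-lemma refinement of\/ $\sT$\+filtrations, so locating the indispensability of summand closure in (Ef3) is not quite accurate.)
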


\begin{proof}
 This is~\cite[Theorem~3.16]{Sto-ICRA}.
\end{proof}

\begin{lem} \label{deconstructibility-transitive}
 Let\/ $\sK$ be a efficient exact category and\/ $\sE\subset\sK$
be a full subcategory closed under transfinitely iterated extensions,
endowed with the inherited exact structure.
 Let\/ $\sS\subset\sE$ be a class of objects.
 Then the notation\/ $\Fil(\sS)$ is unambiguous: the class of all\/
$\sS$\+filtered objects in\/ $\sE$ coincides with the class of all\/
$\sS$\+filtered objects in\/~$\sK$.
\end{lem}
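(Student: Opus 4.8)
The plan is to establish the two inclusions $\Fil_\sK(\sS)\subseteq\Fil_\sE(\sS)$ and $\Fil_\sE(\sS)\subseteq\Fil_\sK(\sS)$, writing $\Fil_\sK$ and $\Fil_\sE$ for the filtration closures formed inside $\sK$ and inside $\sE$, respectively. The only nontrivial point is the comparison of the two notions of smooth chain: in an $\sS$\+filtration in $\sE$ the intermediate objects at limit stages are the direct limits $\varinjlim_{\gamma<\beta}E_\gamma$ computed in the subcategory $\sE$, whereas in an $\sS$\+filtration in $\sK$ they are the direct limits computed in the ambient category $\sK$. I will lean on three elementary observations: (i) a three-term short sequence with all terms in $\sE$ is a conflation in the inherited exact structure on $\sE$ precisely when it is a conflation in $\sK$, so ``admissible monomorphism with cokernel in $\sS$'' means the same thing in $\sE$ and in $\sK$ for such sequences; (ii) because $\sE\hookrightarrow\sK$ is full, whenever a colimit computed in $\sK$ of a diagram lying in $\sE$ happens to be an object of $\sE$, it is automatically the colimit of that diagram computed in $\sE$ as well; and (iii) $\sE$ contains $0$ and is closed under extensions (the length\+$2$ case of closure under transfinitely iterated extensions), and of course $\Fil_\sK(\sE)=\sE$ by hypothesis.

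For $\Fil_\sK(\sS)\subseteq\Fil_\sE(\sS)$, let $(E_\beta)_{0\le\beta\le\alpha}$ be an $\sS$\+filtration in $\sK$. Restricting it to the indices $\le\beta$ shows $E_\beta\in\Fil_\sK(\sS)\subseteq\Fil_\sK(\sE)=\sE$ for every~$\beta$, so all its terms lie in~$\sE$; by observation~(i) each step is a conflation in $\sE$, hence $E_\beta\rarrow E_{\beta+1}$ is an admissible monomorphism in $\sE$ with cokernel in $\sS$; and by observation~(ii) the smoothness equalities $E_\beta=\varinjlim_{\gamma<\beta}E_\gamma$, which hold in $\sK$, persist in~$\sE$. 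Hence the very same data is an $\sS$\+filtration in $\sE$, and $E_\alpha\in\Fil_\sE(\sS)$.

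For the reverse inclusion, which is the part requiring an argument, let $(E_\beta)_{0\le\beta\le\alpha}$ be an $\sS$\+filtration in $\sE$, and prove by transfinite induction on $\beta\le\alpha$ that $(E_\gamma)_{0\le\gamma\le\beta}$ is an $\sS$\+filtration in $\sK$ (this encompasses the claim that $E_\gamma=\varinjlim_{\delta<\gamma}E_\delta$ in $\sK$ for every limit ordinal $\gamma\le\beta$). The base case $\beta=0$ is trivial, and the successor case follows at once from observation~(i). For a limit ordinal~$\beta$, the inductive hypothesis makes $(E_\gamma)_{\gamma<\beta}$ a smooth chain of admissible monomorphisms in $\sK$; by axiom~(Ef1) for the efficient exact category $\sK$ its transfinite composition $F:=\varinjlim_{\gamma<\beta}^{\sK}E_\gamma$ exists, and adjoining the already-treated earlier steps exhibits $F$ as an $\sS$\+filtered object of $\sK$, hence $F\in\Fil_\sK(\sS)\subseteq\Fil_\sK(\sE)=\sE$. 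Now $F$ is a $\sK$\+colimit of a diagram in $\sE$ that lies in $\sE$, so by observation~(ii) it is the $\sE$\+colimit of $(E_\gamma)_{\gamma<\beta}$; but $E_\beta$ is that same $\sE$\+colimit by smoothness of the given filtration, whence $F\cong E_\beta$ compatibly with the structure morphisms, so $E_\beta=\varinjlim_{\gamma<\beta}^{\sK}E_\gamma$ and the chain up to $\beta$ is indeed an $\sS$\+filtration in $\sK$. Taking $\beta=\alpha$ yields $E_\alpha\in\Fil_\sK(\sS)$.

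The single step that is more than bookkeeping is the limit stage of this last induction: one must first know that the transfinite composition formed in the ambient category $\sK$ lands back in $\sE$, and only then can it be identified with the colimit formed in~$\sE$. That is exactly where the efficiency of $\sK$ (so that the ambient transfinite composition exists at all) and the closure of $\sE$ under transfinitely iterated extensions (so that it returns to $\sE$) are both genuinely used; everything else reduces to the fullness of $\sE\hookrightarrow\sK$ together with the definition of the inherited exact structure.
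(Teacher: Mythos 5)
Your argument is correct. The paper offers no proof of this lemma, deferring entirely to \cite[Lemma~1.11]{SP} and \cite[Lemma~3.18]{Sto-ICRA}; your transfinite induction --- reducing everything to the fullness of $\sE\subset\sK$ and the definition of the inherited exact structure, with the only real content at the limit stage, where axiom~(Ef1) supplies the ambient transfinite composition and closure of $\sE$ under transfinitely iterated extensions brings it back into $\sE$ so that it can be identified with the colimit computed in $\sE$ --- is exactly the standard argument behind those citations, and you correctly isolate where each hypothesis is used.
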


\begin{proof}
 This is a part of~\cite[Lemma~1.11]{SP} or~\cite[Lemma~3.18]{Sto-ICRA}.
\end{proof}

\Section{Pure Exact Structure and Deconstructibility}
\label{pure-exact-structure-secn}

 The aim of this section is to prove Proposition~A(ii).
 We restate it now in a more general form as the following
Proposition~\ref{weakly-deconstructible-prop}.

 Let $\sK$ be a Grothendieck category.
 We will say that a class of objects\/ $\sF\subset\sK$ is
\emph{weakly deconstructible} if there exists a set of objects
$\sT\subset\sF$ such that $\sF\subset\Fil(\sT)$.
 Clearly, a class of objects is deconstructible if and only if it is
weakly deconstructible \emph{and} closed under transfinitely iterated
extensions.

\begin{prop} \label{weakly-deconstructible-prop}
 Let\/ $\sK$ be a locally finitely presentable abelian category and\/
$\sS\subset\sK$ be a class of finitely presentable objects closed under
finite direct sums.
 Then the class of objects\/ $\varinjlim\sS\subset\sK$ is weakly
deconstructible.
\end{prop}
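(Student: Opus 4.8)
The plan is to produce a \emph{set} $\sT\subset\varinjlim\sS$ of objects such that every object of $\varinjlim\sS$ is $\sT$\+filtered; the natural choice is a set of representatives of the isomorphism classes of $\kappa$\+presentable objects lying in $\varinjlim\sS$, for a suitably large regular cardinal~$\kappa$. The two main inputs are: first, the pure exact structure on $\sK$ --- a short exact sequence being \emph{pure} when $\Hom_\sK(T,{-})$ carries it to an exact sequence of abelian groups for every finitely presentable $T\in\sK$ --- together with the fact that, with respect to this exact structure, the class $\varinjlim\sS$ is closed under pure subobjects and pure epimorphic images (the category-theoretic form of Lenzing's theorem; cf.\ Lemma~\ref{restricted-pure-exact-structure} and \cite{Len,CB}); second, the standard facts that $\sK$, being locally finitely presentable, is locally $\kappa$\+presentable for every regular $\kappa\ge\aleph_0$, that the $\kappa$\+presentable objects of $\sK$ form a set up to isomorphism, and that $\kappa$\+presentable objects are closed under $\kappa$\+small colimits in~$\sK$.

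Granting a Key Lemma stated below, the argument is a transfinite construction. Fix $M\in\varinjlim\sS$; I would build a smooth chain of subobjects $0=M_0\subseteq M_1\subseteq\dotsb\subseteq M_\alpha=M$ in which every $M_\beta$ is a \emph{pure} subobject of $M$ and every quotient $M_{\beta+1}/M_\beta$ is $\kappa$\+presentable. At a limit ordinal one puts $M_\beta=\bigcup_{\gamma<\beta}M_\gamma$; this is again pure in $M$, since $M/M_\beta=\varinjlim_{\gamma<\beta}M/M_\gamma$ and any morphism into this directed colimit from a finitely presentable object factors through some $M/M_\gamma$, where it lifts to $M$ by purity of~$M_\gamma$. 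At a successor step, if $M_\beta\ne M$ one picks a nonzero finitely generated subobject of $M/M_\beta$ --- for instance the image of a nonzero morphism into $M/M_\beta$ from a finitely presentable generator of $\sK$; by the choice of $\kappa$ it is $\kappa$\+presentable. Since $M/M_\beta$ is a pure epimorphic image of $M$, it lies in $\varinjlim\sS$, so the Key Lemma enlarges this subobject to a $\kappa$\+presentable pure subobject $\bar N\subseteq M/M_\beta$; one then takes $M_{\beta+1}$ to be the preimage of $\bar N$ in $M$. Thus $M_{\beta+1}/M_\beta=\bar N$ is $\kappa$\+presentable, and $M_{\beta+1}$ is pure in $M$ by the two-out-of-three property for pure short exact sequences (it lies between the pure subobject $M_\beta$ and $M$, with $M_{\beta+1}/M_\beta$ pure in $M/M_\beta$). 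Since the $M_\beta$ form a strictly increasing chain in the small lattice of subobjects of $M$, the construction terminates with $M_\alpha=M$. Finally, each $M_{\beta+1}/M_\beta$ is a pure subobject of $M/M_\beta\in\varinjlim\sS$, hence belongs to $\varinjlim\sS$, and being $\kappa$\+presentable it belongs to $\sT$; therefore $M\in\Fil(\sT)$.

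The Key Lemma reads: for every $M\in\varinjlim\sS$ and every $\kappa$\+presentable subobject $N_0\subseteq M$ there is a $\kappa$\+presentable pure subobject $N\subseteq M$ with $N_0\subseteq N$. I would prove this by a ``closing-off under purity'' argument, building an increasing $\omega$\+chain $N_0\subseteq N_1\subseteq\dotsb$ of subobjects of $M$. Call a pair $(u\:T'\rarrow T,\,f\:T'\rarrow N_i)$ of morphisms between finitely presentable objects a \emph{purity constraint over} $N_i$ if the composite $T'\overset f\rarrow N_i\rarrow M$ factors through~$u$; for each such pair choose a factorization $h\:T\rarrow M$ and adjoin its image --- a finitely generated subobject of $M$ --- to $N_i$, letting $N_{i+1}$ be $N_i$ together with all these images. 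Put $N=\bigcup_i N_i$. Then any purity constraint over $N$ has its morphism $f$ defined on a finitely presentable object, hence factoring through some $N_i$, so it was resolved at stage $i$ and $f$ extends along $u$ to a morphism $T\rarrow N$; this is precisely the statement that $N\subseteq M$ is a pure subobject, and $N_0\subseteq N$ by construction.

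The step I expect to require the most care --- the crux of the proof --- is the cardinal bookkeeping behind the Key Lemma. One must fix $\kappa$ at the start to be regular and large enough: exceeding $\aleph_0$, the number of isomorphism classes of finitely presentable objects of $\sK$, and every Hom-set between finitely presentable objects; and large enough that every $\kappa$\+generated object of $\sK$ is $\kappa$\+presentable (such $\kappa$ exist by the theory of accessible categories --- compare the elementary fact that a module over a ring $R$ generated by fewer than $\kappa$ elements is $\kappa$\+presented whenever $\kappa$ is regular and larger than $|R|$). With such a $\kappa$, at each stage the set of purity constraints over $N_i$ has cardinality $<\kappa$ --- here one uses $|\Hom_\sK(T',N_i)|<\kappa$ for finitely presentable $T'$, valid because $N_i$ is $\kappa$\+presentable --- so $N_{i+1}$, formed from the $\kappa$\+generated object $N_i$ by adjoining fewer than $\kappa$ finitely generated subobjects, is $\kappa$\+generated, hence $\kappa$\+presentable; and $N$, an $\omega$\+colimit of $\kappa$\+presentable objects, is $\kappa$\+presentable since $\kappa>\aleph_0$. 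The set $\sT$ is then well-defined, the finitely generated subobjects used in the main construction are automatically $\kappa$\+presentable, and the whole argument goes through.
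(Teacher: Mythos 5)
Your proof is correct, but it takes a genuinely different route from the paper. The paper does not purify inside $\sK$ at all: it invokes the equivalence of exact categories $\sC=\varinjlim\sS\simeq\Modrfl\cS$ from Proposition~\ref{pure-exact-structure-prop}(c), where $\cS$ is a small category equivalent to $\sS$ viewed as a ``ring with many objects'', quotes the deconstructibility of the class of flat $\cS$\+modules from Bican--El~Bashir--Enochs \cite[Lemma~1]{BBE} (see also \cite[Lemma~6.23]{GT}), and then transports the resulting pure filtrations back to $\sK$. You instead carry out the Bican--El~Bashir--Enochs purification argument directly in $\sK$: take $\sT$ to be the $\kappa$\+presentable objects of $\varinjlim\sS$ for a large regular $\kappa$, close off a given subobject under ``purity constraints'' to get $\kappa$\+presentable pure subobjects, and build a continuous chain of pure subobjects whose consecutive quotients are pure subobjects of pure quotients of $M$, hence again in $\varinjlim\sS$ by Lenzing's theorem. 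Both arguments in fact yield the same stronger conclusion (a \emph{pure} filtration by members of $\sT$), and the paper itself acknowledges your route as the natural alternative in Remark~\ref{proposition-A-ii-remark}, where exactly this ``purification procedure'' (\cite[proof of Theorem~5]{BBE}, \cite[Lemma~10.5]{GT}) is cited as the way to establish the Kaplansky-class property of~$\sC$. What the paper's route buys is brevity and the outsourcing of all cardinal bookkeeping to \cite{BBE}, at the price of the functor-category machinery (which it needs elsewhere anyway); your route is self-contained within $\sK$ but redoes the cardinality estimates and, crucially, relies on the equivalence between the factorization-style notion of pure monomorphism (the one your ``purity constraints'' encode) and the $\Hom_\sK(T,{-})$\+exactness notion used in your limit-ordinal and two-out-of-three steps --- this equivalence is standard in locally finitely presentable abelian categories (cf.\ \cite{AR,CB}) but is a genuine input you should cite rather than treat as definitional.
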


 The definition of the pure exact structure on the module category
$\Modr R$ was already given in
Section~\ref{generalized-cotorsion-periodicity-secn}.
 It is extended to finitely accessible additive categories $\sK$
as follows~\cite[Section~3]{CB}, \cite[Section~4]{Sto}.

 A \emph{pure short exact sequence} $0\rarrow K\rarrow L\rarrow M
\rarrow0$ in $\sK$ is a pair of composable morphisms such that
the functor $\Hom_\sK(T,{-})\:\sK\rarrow \Ab$ takes this sequence to
a short exact sequence of abelian groups for every finitely presentable
object $T\in\sK$.
 It is not immediately obvious from this definition that the collection
of all pure short exact sequences defines an exact structure on~$\sK$.
 This is the result of part~(a) of the next
Proposition~\ref{pure-exact-structure-proposition}.

 For any small preadditive category $\cS$ (i.~e., a small category
enriched in abelian groups), we denote by $\Modr\cS=
\Funct_\ad(\cS^\sop,\Ab)$ the category of contravariant additive
functors from $\cS$ to $\Ab$, and by $\cS\Modl=\Funct_\ad(\cS,\Ab)$
the category of covariant additive functors $\cS\rarrow\Ab$.
 A small preadditive category $\cS$ can be viewed as a ``ring with
many objects'' or ``a nonunital ring with enough idempotents'';
then the objects of $\Modr\cS$ and $\cS\Modl$ are interpreted as right
and left $\cS$\+modules.

 The abelian category $\Modr\cS$ is locally finitely presentable and
has enough projective objects.
 Representable functors play the role of free modules with one
generator in $\Modr\cS$, and the projective objects are the direct
summands of coproducts of representables.
 There is a naturally defined tensor product functor $\ot_\cS\:
\Modr\cS\times\cS\Modl\rarrow\Ab$, and its derived functor $\Tor^\cS_*$
can be constructed as usual.
 Hence one can speak of \emph{flat} right and left $\cS$\+modules.
 We denote the full subcategory of flat modules by
$\Modrfl\cS\subset\Modr\cS$ (cf.\ the discussion
in~\cite[Section~2]{BHP}).

\begin{prop} \label{pure-exact-structure-proposition}
 Let\/ $\sK$ be a finitely accessible additive category.
 In this context: \par
\textup{(a)} In any pure short exact sequence\/ $0\rarrow K\rarrow L
\rarrow M\rarrow0$ in\/ $\sK$, the morphism $K\rarrow L$ is a kernel
of the morphism $L\rarrow M$, and the morphism $L\rarrow M$ is
a cokernel of the morphism $K\rarrow L$.
 The class of all pure short exact sequences defines an exact
structure on\/ $\sK$, called the \emph{pure exact structure}. \par
\textup{(b)} The pure short exact sequences in\/ $\sK$ are precisely
all the direct limits of split short exact sequences in\/~$\sK$. \par
\textup{(c)} Let\/ $\sS\subset\sK$ be a class of finitely presentable
objects closed under finite direct sums such that all the objects of\/
$\sK$ are direct limits of objects from\/~$\sS$.
 Denote by $\cS$ a small category equivalent to the full subcategory\/
$\sS\subset\sK$.
 Then there is a natural equivalence between the category\/ $\sK$ and
the category of flat right $\cS$\+modules, $\sK\simeq\Modrfl\cS$.
 Under this equivalence, the pure exact sequences in\/ $\sK$ correspond
precisely to the short sequences in\/ $\Modrfl\cS$ that are exact
in\/ $\Modr\cS$.
 So the pure exact structure on\/ $\sK$ corresponds to the exact
structure on\/ $\Modrfl\cS$ inherited from the abelian exact structure
on\/ $\Modr\cS$.
\end{prop}

\begin{proof}
 Part~(c): the first assertion is~\cite[Theorems~1.4(2)]{CB},
\cite[Proposition~5.1]{Kra}, or~\cite[Theorem~1.1]{DG}
(cf.~\cite[Proposition~4.2]{Sto} and~\cite[Lemma~2.2]{BHP}).
 Notice that the finitely presentable objects of $\sK$ are precisely
all the direct summands of the objects from $\sS$, so $\sS$ is
a strong generating family in $\sK$ in the sense of~\cite{DG}.

 The functor $\sK\rarrow\Modrfl\cS$ assigns to an object $K\in\sK$
the contravariant functor $\Hom_\sK({-},K)\:\sK^\sop\rarrow\Ab$
restricted to the full subcategory $\sS\subset\sK$.
 This functor identifies the full subcategory $\sS\subset\sK$ with
the full subcategory of representable functors in $\Modr\cS$, and
preserves direct limits (as the objects from $\sS$ are finitely
presentable).
 For an arbitrary preadditive category $\cS$, the representable functors
play the role of free modules with one generator in $\Modr\cS$;
when $\cS$ is an additive category, as in the situation at hand,
these are the same things as the finitely generated free modules.
 It remains to recall that the flat modules are the direct limits of
finitely generated free ones (also over a ring with
many objects~\cite[Theorem~3.2]{OR}).

 To prove the second assertion of~(c), it suffices to say that
the equivalence $\sK\simeq\Modrfl\cS$, viewed as a functor $\sK\rarrow
\Modr\cS$, takes pure exact sequences in $\sK$ to exact sequences
in $\Modr\cS$ by construction.
 On the other hand, the inverse functor $\Modrfl\cS\rarrow\sK$ takes
short exact sequences of flat modules to pure short exact sequences
in $\sK$ because every short exact sequence of flat modules is a direct
limit of split short exact sequences.
 Notice that the direct limits of pure short exact sequences are pure
exact in $\sK$, as one can easily see from the definition.

 Parts~(a) and~(b) follow from part~(c), as the class of all short
sequences in $\Modrfl\cS$ that are exact in $\Modr\cS$ clearly has
all the desired properties.
 The assertion of part~(b) is also a part
of~\cite[Theorem~16.1.15]{Pre}, while part~(a) is explained
in~\cite[Section~3.1]{CB}.
\end{proof}

 With the pure exact structure in mind, one can speak about \emph{pure
subobjects}, \emph{pure quotients}, \emph{pure monomorphisms},
\emph{pure epimorphisms}, \emph{pure-projective objects},
and \emph{pure acyclic complexes} in a finitely accessible
additive category~$\sK$.

 Part~(b) of the following proposition is a generalization of
Lemma~\ref{restricted-pure-exact-structure}.

\begin{prop} \label{pure-exact-structure-subcategory}
 Let\/ $\sK$ be a finitely accessible additive category and\/
$\sS\subset\sK$ be a class of finitely presentable objects closed
under finite direct sums.
 Then \par
\textup{(a)} The full subcategory\/ $\sC=\varinjlim\sS\subset\sK$ is
closed under pure extensions (as well as pure quotients) in\/ $\sK$,
so it inherits an exact category structure from the pure exact
structure on\/~$\sK$.
 If the category\/ $\sK$ is abelian and locally finitely presentable,
then the full subcategory\/ $\sC$ is also closed under pure subobjects
in\/~$\sK$. \par
\textup{(b)} The inherited exact category structure on\/ $\sC\subset\sK$
coincides with the pure exact structure on the finitely
accessible additive category\/~$\sC$.
 So, in this exact structure, a short sequence\/ $0\rarrow K\rarrow L
\rarrow M\rarrow0$ is exact if and only if the short sequence of abelian
groups\/ $0\rarrow \Hom_\sK(S,K)\rarrow\Hom_\sK(S,L)\rarrow\Hom_\sK(S,M)
\rarrow0$ is exact for every object $S\in\sS$. \par
\textup{(c)} Denote by $\cS$ a small category equivalent to
the full subcategory\/ $\sS\subset\sK$.
 Then there is a natural equivalence between the category\/ $\sC$ and
the category of flat right $\cS$\+modules, $\sC\simeq\Modrfl\cS$.
 Under this equivalence, the exact structure on\/ $\sC$ inherited from
the pure exact structure on\/ $\sK$ corresponds to the exact structure
on\/ $\Modrfl\cS$ inherited from the abelian exact structure on\/
$\Modr\cS$.
\end{prop}

\begin{proof}
 Part~(a) is a straightforward generalization
of~\cite[Proposition~2.2]{Len} which was already mentioned in the proof
of Corollary~\ref{kernel-of-coproduct-onto-direct-limit-cor}.
 Notice that any finitely accessible additive category has arbitrary
coproducts, which makes the argument from~\cite{Len} applicable.
 For another argument, see Proposition~\ref{kappa-lenzing-prop} below.

 Part~(b): the category $\sC$ is finitely accessible
by~\cite[Theorem~4.1]{CB} or~\cite[Proposition~5.11]{Kra}.
 Furthermore, the finitely presentable objects of $\sC$ are precisely
all the direct summands of the objects from~$\sS$; so all of them
are also finitely presentable in~$\sK$.
 Now it follows immediately from the definitions that any pure short
exact sequence in $\sK$ with the terms belonging to $\sC$ is
a pure short exact sequence in~$\sC$.
 The argument from the proof of
Lemma~\ref{restricted-pure-exact-structure} proves the converse
implication.
 Alternatively, by
Proposition~\ref{pure-exact-structure-proposition}(b), any pure short
exact sequence in $\sC$ is a direct limit of split short exact
sequences in $\sC$, hence it is also a direct limit of split short
exact sequences in $\sK$, i.~e., a pure short exact sequence in~$\sK$.

 Part~(c) is deduced from part~(b) by applying
Proposition~\ref{pure-exact-structure-proposition}(c) to the class
of objects $\sS$ in the finitely accessible additive category~$\sC$.
\end{proof}

\begin{proof}[Proof of
Proposition~\ref{weakly-deconstructible-prop}]
 We have to show that there is a set of objects $\sT\subset\sC$ such
that all the objects of $\sC$ are filtered by $\sT$ in~$\sK$.
 Let us prove a stronger assertion instead: there exists a set of
objects $\sT\subset\sC$ such that all the objects of $\sC$ are
\emph{pure filtered} by $\sT$, that is, filtered by objects from $\sT$
in the pure exact structure on~$\sC$.
 Clearly, any filtration in the pure exact structure on $\sC$ is
also a filtration in the pure exact structure on $\sK$, and
consequently it is a filtration in the abelian exact structure on $\sK$
(as all pure short exact sequences in $\sK$ are exact).

 Now we use the equivalence of exact categories $\sC\simeq\Modrfl\cS$
from Proposition~\ref{pure-exact-structure-subcategory}(c).
 In view of this equivalence, it remains to observe that the class
of all flat $\cS$\+modules is deconstructible (in itself viewed as
an exact category, or equivalently, in the abelian category of modules
$\Modr\cS$, cf.\ Lemma~\ref{deconstructibility-transitive}).
 This is essentially the result of~\cite[Lemma~1]{BBE} (see
also~\cite[Lemma~6.23]{GT}).
\end{proof}

\begin{rem}
 The assertion of Proposition~\ref{weakly-deconstructible-prop} admits
a far-reaching generalization: all the finite presentability
conditions can be dropped.
 For any Grothendieck category $\sK$ and any \emph{set} of
objects $\sS\subset\sK$ closed under finite direct sums, the class
$\varinjlim\sS\subset\sK$ is weakly deconstructible.
 This is a Grothendieck category multiobject generalization
of~\cite[Corollary~3.4]{PPT}, provable by an argument similar to
the one in~\cite{PPT} and extending the proofs of
Propositions~\ref{weakly-deconstructible-prop}\+-%
\ref{pure-exact-structure-subcategory} in the following way.

 For consistency of notation, let us denote again by $\cS$ the full
additive subcategory in $\sK$ corresponding to the class~$\sS$.
 Then there is no longer a category equivalence as in
Proposition~\ref{pure-exact-structure-subcategory}(c), but there is
still a right exact, direct limit-preserving functor
$\widetilde\Theta\:\Modr\cS\rarrow\sK$ left adjoint to the restricted
Yoneda functor $K\longmapsto\Hom_\sK({-},K)|_\sS$.
 In the spirit of the argument in~\cite{PPT}, one can interpret
$\widetilde\Theta$ as a \emph{tensor product} functor.
 Specifically, this is a restriction of the category-theoretic
tensor product operation
$$
 \ot_\cS\:\Funct_\ad(\cS^\sop,\Ab)\times
 \Funct_\ad(\cS,\sK)\rarrow\sK
$$
(see~\cite[Section~1]{OR} for the definition).
 The functor $\widetilde\Theta$ is constructed by tensoring the usual
right $\cS$\+modules with one specific left $\cS$\+module given by
the covariant identity inclusion functor $\Id\:\cS=\sS\rarrow\sK$; so
$$
 \widetilde\Theta(\cM)=\cM\ot_\cS\Id
$$
for all $\cM\in\Modr\cS=\Funct_\ad(\cS^\sop,\Ab)$.
 Denoting by $\Theta\:\Modrfl\cS\rarrow\sK$ the restriction of
$\widetilde\Theta$ to the full subcategory of flat modules
$\Modrfl\cS\subset\Modr\cS$, one observes that $\Theta$ is
an exact functor (since all the short exact sequences of flat modules
are direct limits of split ones).
 The functor $\Theta$ is \emph{not} fully faithful, but the full
subcategory $\sC\subset\sK$ is the essential image of~$\Theta$
(essentially for the reasons explained in~\cite{PPT}).

 Denoting by $\cT\subset\Modrfl\cS$ a set of flat modules such that
$\Modrfl\cS=\Fil(\cT)$, one concludes that $\sT=\Theta(\cT)\subset\sK$
is a set of objects such that $\sT\subset\sC$ and
$\sC\subset\Fil(\sT)$, since the functor $\Theta$ preserves
transfinitely iterated extensions.
\end{rem}

\begin{proof}[Proof of Proposition~A(ii) from
Section~\ref{introd-theorem-A}]
 By assumption, the class $\sC=\varinjlim\sS$ is closed under
extensions in~$\sK$.
 Since $\sC$ is also closed under direct limits in $\sK$ by
Proposition~\ref{varinjlim-varinjlim-closed}, it follows that $\sC$
is closed under transfinitely iterated extensions in~$\sK$.
 Since, on the other hand, the class $\sC$ is weakly deconstructible
in $\sK$ by Proposition~\ref{weakly-deconstructible-prop},
we can conclude that $\sC$ is deconstructible under our assumptions.
\end{proof}

\begin{cor} \label{angeleri-trlifaj-cotorsion-pair-in-lfp-category}
 Let\/ $\sK$ be a locally finitely presentable abelian category,
and let\/ $\sS\subset\sK$ be a class of objects of type\/ $\FP_2$
(as defined in Section~\ref{direct-limit-closures-of-FP-classes})
containing a set of generators of the abelian category\/ $\sK$ and
closed under extensions in\/~$\sK$.
 Put\/ $\sC=\varinjlim\sS\subset\sK$.
 Then there is a complete cotorsion pair $(\sC,\sD)$ in\/~$\sK$.
\end{cor}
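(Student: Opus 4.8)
The plan is to exhibit a \emph{set} of objects $\sT\subset\sK$ whose
generated cotorsion pair is exactly $(\sC,\sD)$, and then to invoke
the Eklof--Trlifaj theorem (Theorem~\ref{eklof-trlifaj-theorem}) to
obtain completeness.
 The essential content has already been packaged into
Propositions~\ref{FP2-varinjlim-extension-closed}
and~\ref{weakly-deconstructible-prop}; what remains is to assemble
the pieces.

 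First I would record the closure properties of $\sC=\varinjlim\sS$.
 Since $\sS$ consists of objects of type $\FP_2$ and is closed under
extensions, Proposition~\ref{FP2-varinjlim-extension-closed} shows
that $\sC$ is closed under extensions in $\sK$.
 By Proposition~\ref{varinjlim-varinjlim-closed}, the class $\sC$ is
also closed under coproducts and direct limits; hence it is closed
under transfinitely iterated extensions, and, being closed under
direct limits, under direct summands as well.
 Moreover $\sS\subset\sC$, so $\sC$ contains a set of generators
of~$\sK$.
 Now $\sC$ is closed under extensions and direct limits, so by
Proposition~A(ii) (equivalently, by
Proposition~\ref{weakly-deconstructible-prop} together with
the preceding observations) the class $\sC$ is deconstructible:
there is a \emph{set} $\sT\subset\sC$ with $\sC=\Fil(\sT)$.
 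In particular $\Fil(\sT)$ is generating in the Grothendieck
category~$\sK$.

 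Finally I would apply Theorem~\ref{eklof-trlifaj-theorem} to the set
$\sT$.
 Let $(\sA,\sB)$ be the cotorsion pair generated by $\sT$, so that
$\sB=\sT^{\perp_1}$ and $\sA={}^{\perp_1}\sB$.
 Theorem~\ref{eklof-trlifaj-theorem}(b) gives
$\sA=\Fil(\sT)^\oplus=\sC^\oplus=\sC$, where the last equality uses
that $\sC$ is closed under direct summands; hence
$\sB=\sA^{\perp_1}=\sC^{\perp_1}=\sD$.
 Theorem~\ref{eklof-trlifaj-theorem}(a), which applies because
$\sA=\sC$ is generating, then shows that the cotorsion pair
$(\sA,\sB)=(\sC,\sD)$ is complete.
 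I do not expect any genuine obstacle beyond bookkeeping: the only
place where the hypothesis really enters is that the $\FP_2$
assumption is what forces $\sC$ to be extension-closed (via
Proposition~\ref{FP2-varinjlim-extension-closed}), and hence
deconstructible; without this one would merely obtain a set
$\sT\subset\sC$ with $\sC\subset\Fil(\sT)$, which is not enough to
identify the cotorsion pair generated by $\sT$ with $(\sC,\sD)$.
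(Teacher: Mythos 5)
Your proposal is correct and follows essentially the same route as the paper: deconstructibility of $\sC$ via Propositions~\ref{varinjlim-varinjlim-closed}, \ref{FP2-varinjlim-extension-closed}, and~\ref{weakly-deconstructible-prop} (i.e.\ Proposition~A(i\+-ii)), followed by an application of the Eklof--Trlifaj theorem to a set $\sT$ with $\Fil(\sT)=\sC$. The only difference is that you make explicit the verification $\Fil(\sT)^\oplus=\sC^\oplus=\sC$ using closure of $\sC$ under direct summands, which the paper leaves implicit.
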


\begin{proof}
 By Proposition~A(i\+-ii), or in other words, by
Propositions~\ref{varinjlim-varinjlim-closed},
\ref{FP2-varinjlim-extension-closed},
and~\ref{weakly-deconstructible-prop}, the class $\sC$
is deconstructible in~$\sK$; so $\sC=\Fil(\sT)$ for a set of
objects $\sT\subset\sK$.
 Furthermore, by assumption, the class $\sC$ contains a set of
generators of~$\sK$.
 Let $(\sC',\sD)$ be the cotorsion pair in $\sK$ generated by~$\sT$.
 Applying Theorem~\ref{eklof-trlifaj-theorem}, we conclude that
$(\sC',\sD)$ is a complete cotorsion pair and $\sC'=\sC$.
\end{proof}

\Section{Classes of $\kappa$-Presentables and Their $\kappa$-Direct
Limit Closures}  \label{direct-limits-of-kappa-P-classes}

 In this section we discuss generalizations of some results from
Sections~\ref{generalized-fp-projective-periodicity-I-secn},
\ref{direct-limit-closures-of-FP-classes},
and~\ref{pure-exact-structure-secn} from the countable
cardinal $\aleph_0$ to arbitrary regular cardinals~$\kappa$.
 In particular, we present a version of Proposition~A(i) for regular
cardinals~$\kappa$, stated below as
Proposition~\ref{kappaP2-varinjlim-extension-closed}, and discuss
the difficulties involved with an attempt to extend Proposition~A(ii)
to higher cardinals (see Remark~\ref{proposition-A-ii-remark}).
 The results of this section will be used in the proof of Theorem~A
in the next Section~\ref{generalized-fp-projective-periodicity-II-secn}.

 We refer to the book~\cite[Definitions~1.13 and~1.17, and
Theorem~1.20]{AR} for the definitions of a \emph{$\kappa$\+presentable
object} and a \emph{locally $\kappa$\+presentable} category
(for a regular cardinal~$\kappa$).
 For the more general notion of a \emph{$\kappa$\+accessible category},
see~\cite[Definition~2.1]{AR}.
 The functors of $\kappa$\+direct limit (i.~e., direct limits indexed
by $\kappa$\+directed posets) are exact in any locally
$\kappa$\+presentable category~\cite[Proposition~1.59]{AR}.
 Up to an isomorphism, in a locally $\kappa$\+presentable category
there is only a set of $\kappa$\+presentable
objects~\cite[Remark~1.19]{AR}.
 Any Grothendieck abelian category is locally presentable, i.~e.,
locally presentable for \emph{some} regular cardinal~$\kappa$
\,\cite[Lemma~A.1]{Sto0}.

 The following proposition is a generalization of
Proposition~\ref{varinjlim-varinjlim-closed} to higher cardinals,
and also a category-theoretic generalization
of~\cite[Proposition~5.5]{GIT}.
 We state it for nonadditive categories, as the additive category case
is no easier than the general one.

\begin{prop} \label{kappa-varinjlim-varinjlim-closed}
 Let\/ $\sK$ be a $\kappa$\+accessible category and\/ $\sS\subset\sK$
be a class of\/ $\kappa$\+presentable objects.
 Then the class\/ $\varinjlim^{(\kappa)}\sS$ of all $\kappa$\+direct
limits of objects from\/ $\sS$ in\/ $\sK$ (i.~e., direct limits
indexed by $\kappa$\+directed posets) is closed under
$\kappa$\+direct limits in\/~$\sK$.
 An object $L\in\sK$ belongs to\/ $\varinjlim^{(\kappa)}\sS$ if and
only if, for any $\kappa$\+presentable object $T$ in\/ $\sK$, any
morphism $T\rarrow L$ in\/ $\sK$ factorizes through an object
from\/~$\sS$.
 The full subcategory\/ $\varinjlim^{(\kappa)}\sS\subset\sK$ is
$\kappa$\+accessible, and its $\kappa$\+presentable objects are
precisely all the retracts of the objects from\/~$\sS$.
 If all coproducts exist in\/ $\sK$ and the class\/ $\sS$ is closed
under $\kappa$\+small coproducts (i.~e., coproducts indexed by sets
of cardinality~$<\kappa$), then the class\/ $\varinjlim^{(\kappa)}\sS$
is closed under all coproducts in\/~$\sK$.
\end{prop}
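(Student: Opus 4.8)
The plan is to adapt the proof of Proposition~\ref{varinjlim-varinjlim-closed} (which goes back to~\cite{Len,CB,Kra}) from the countable cardinal to a general regular cardinal~$\kappa$, proving the stated characterization of membership in $\varinjlim^{(\kappa)}\sS$ first, since the two closure statements then follow from it with little effort. I will use the following standard facts about a locally $\kappa$\+presentable category~$\sK$ \cite{AR}: it is cocomplete; up to isomorphism it has only a set of $\kappa$\+presentable objects, so $\sS$ is essentially small; the class of $\kappa$\+presentable objects is closed under $\kappa$\+small colimits in~$\sK$; every object $L\in\sK$ is the colimit of its canonical diagram, i.~e., of the forgetful functor from the (essentially small and $\kappa$\+filtered) category $\sK_\kappa/L$ of all morphisms $T\rarrow L$ with $T$ a $\kappa$\+presentable object; and every $\kappa$\+filtered category admits a cofinal functor from a $\kappa$\+directed poset, so that $\varinjlim^{(\kappa)}\sS$ may equivalently be described by means of $\kappa$\+filtered diagrams.

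The ``only if'' direction of the characterization is immediate: if $L=\varinjlim_{j\in J}S_j$ with $J$ a $\kappa$\+directed poset and $S_j\in\sS$, then $\Hom_\sK(T,{-})$ preserves this colimit for any $\kappa$\+presentable $T$, so every morphism $T\rarrow L$ factorizes through some~$S_j$. For the ``if'' direction I would introduce the (essentially small) category $\mathcal{D}$ whose objects are the morphisms $S\rarrow L$ with $S\in\sS$, a morphism from $(S\rarrow L)$ to $(S'\rarrow L)$ being a morphism $S\rarrow S'$ in~$\sK$ compatible with the structure maps to~$L$. The main point is that $\mathcal{D}$ is $\kappa$\+filtered: given fewer than $\kappa$ objects $(S_a\rarrow L)_a$, the coproduct $\coprod_a S_a$ is $\kappa$\+presentable, so the induced morphism $\coprod_a S_a\rarrow L$ factorizes through some $S\in\sS$, producing a cocone in~$\mathcal{D}$; and given a family of fewer than $\kappa$ parallel morphisms in~$\mathcal{D}$ between a fixed pair of objects, their joint coequalizer $C$ in~$\sK$ is $\kappa$\+presentable, the structure morphism of the target factorizes through $C$ (as it equalizes the family) and then, by hypothesis, through some $S''\in\sS$, and the resulting composite is a morphism of~$\mathcal{D}$ equalizing the family. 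The hypothesis is exactly the statement that the forgetful functor $\mathcal{D}\rarrow\sK_\kappa/L$ is cofinal; hence $L$ is the colimit of the diagram $\mathcal{D}\rarrow\sK$, $(S\rarrow L)\mapsto S$, which is a $\kappa$\+filtered limit of objects of~$\sS$. Passing to a cofinal $\kappa$\+directed poset, we conclude that $L\in\varinjlim^{(\kappa)}\sS$.

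Closure of $\varinjlim^{(\kappa)}\sS$ under $\kappa$\+direct limits is then formal: if $L=\varinjlim_{i\in I}L_i$ with $I$ $\kappa$\+directed and each $L_i\in\varinjlim^{(\kappa)}\sS$, then a morphism from a $\kappa$\+presentable $T$ into~$L$ factorizes through some $L_i$, and then through some $S\in\sS$, so $L\in\varinjlim^{(\kappa)}\sS$ by the characterization. For the last assertion, assume $\sS$ closed under $\kappa$\+small coproducts and let $L=\coprod_{x\in X}L_x$ with every $L_x\in\varinjlim^{(\kappa)}\sS$. Expressing $L$ as the $\kappa$\+direct limit, over the $\kappa$\+directed poset of subsets $Y\subseteq X$ with $|Y|<\kappa$, of the partial coproducts $\coprod_{x\in Y}L_x$, and using the closure under $\kappa$\+direct limits just obtained, reduces the problem to the case $|X|<\kappa$. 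In that case, choosing presentations $L_x=\varinjlim_{j\in J_x}S_{xj}$ with $J_x$ $\kappa$\+directed and $S_{xj}\in\sS$ and commuting coproducts past colimits gives $L=\varinjlim_{\vec\jmath\in\prod_x J_x}\bigl(\coprod_x S_{x\jmath_x}\bigr)$, where $\prod_x J_x$ is $\kappa$\+directed and each $\coprod_x S_{x\jmath_x}$ is a $\kappa$\+small coproduct of objects of~$\sS$, hence lies in~$\sS$.

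I expect the technical heart to be the verification that $\mathcal{D}$ is $\kappa$\+filtered and that $L$ is its colimit, i.~e., making the cofinality of $\mathcal{D}$ inside the canonical $\kappa$\+presentable diagram of~$L$ precise, together with the routine but cardinal\+sensitive bookkeeping used throughout: that $\kappa$\+small colimits of $\kappa$\+presentable objects are $\kappa$\+presentable, that a product of $\kappa$\+directed posets is again $\kappa$\+directed, and that, $\kappa$ being regular, a union of fewer than $\kappa$ sets each of cardinality $<\kappa$ has cardinality $<\kappa$.
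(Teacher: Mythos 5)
Your proposal is correct and follows essentially the same route as the paper: the membership criterion is proved by showing that the comma category of all morphisms $S\rarrow L$ with $S\in\sS$ is $\kappa$\+filtered with colimit $L$ (the paper defers the details to~\cite[Proposition~1.22, Theorem~1.5, Remark~1.21]{AR}, which you have correctly spelled out, including the coproduct and coequalizer steps), closure under $\kappa$\+direct limits is then formal, and the coproduct assertion is obtained by rewriting a coproduct of $\kappa$\+direct limits as a $\kappa$\+direct limit of $\kappa$\+small subcoproducts --- the paper does this in one step over the poset of pairs (a $\kappa$\+small subset of the index set, a choice of indices), whereas you do it in two, which is an immaterial difference.
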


\begin{proof}
 Two assertions need to be explained: the ``if'' implication and
the closedness under coproducts.
 The ``only if'' implication is obvious; and the closedness under
$\kappa$\+direct limits follows from the ``if and only if''.

 Concerning the ``if'' implication, the argument is similar to
the one in~\cite[Propositions~1.22 and~2.8(i\+-ii)]{AR}.
 It is convenient to use~\cite[Theorem~1.5 and Remark~1.21]{AR} to
the effect that it suffices to construct a $\kappa$\+filtered category
$D$ and a $D$\+indexed diagram $(S_d)_{d\in D}$ of objects $S_d\in\sS$
in $\sK$ such that $L=\varinjlim_{d\in D}S_d$.
 For this purpose, let $D$ be the essentially small category of all
pairs $d=(S_d,f_d)$, where $S_d\in\sS$ and $f_d\:S_d\rarrow L$ is
an arbitrary morphism.
 Morphisms in the category $D$ are defined in the obvious way, and
the construction of the diagram $D\rarrow\sK$ is also obvious.
 See also~\cite[Proposition~1.2]{Pacc}.

 Concerning the coproducts, let $(I_\xi)_{\xi\in\Xi}$ be a family of
$\kappa$\+directed posets, indexed by a set~$\Xi$; and let
$(K_{i,\xi})_{i\in I}$ be a diagram in $\sK$, indexed by the poset
$I_\xi$ and given for every $\xi\in\Xi$.
 Then the coproduct of $\kappa$\+direct limits
$\coprod_{\xi\in\Xi}\varinjlim_{i\in I_\xi}K_{i,\xi}$ can be expressed
as the following $\kappa$\+direct limit of $\kappa$\+small coproducts.
 Denote by $J$ the set of all pairs $j=(\Upsilon,t)=(\Upsilon_j,t_j)$, 
where $\Upsilon\subset\Xi$ is a subset of cardinality smaller
than~$\kappa$ and $t\:\Upsilon\rarrow\coprod_{\upsilon\in\Upsilon}
I_\upsilon$ is a function assigning to every element $\upsilon\in
\Upsilon$ an element $t(\upsilon)\in I_\upsilon$.
 Given two elements $j$ and $k\in J$, we say that $j\le k$ if
$\Upsilon_j\subset\Upsilon_k$ and, for every $\upsilon\in\Upsilon_j$,
the inequality $t_j(\upsilon)\le t_k(\upsilon)$ holds in~$I_\upsilon$.
 Then $J$ is a $\kappa$\+directed poset; and it is easy to construct
a natural $J$\+indexed diagram in $\sK$, with the object
$K_j=\coprod_{\upsilon\in\Upsilon_j}K_{t_j(\upsilon),\upsilon}$ sitting
at the vertex $j\in J$, such that $\varinjlim_{j\in J}K_j=
\coprod_{\xi\in\Xi}\varinjlim_{i\in I_\xi}K_{i,\xi}$.
\end{proof}

 Let\/ $\sK$ be a $\kappa$\+accessible additive category.
 A \emph{$\kappa$\+pure short exact sequence} $0\rarrow K\rarrow L
\rarrow M\rarrow0$ in $\sK$ is a pair of composable morphisms such that
the functor $\Hom_\sK(T,{-})\:\sK\rarrow\Ab$ takes this sequence to
a short exact sequence of abelian groups for every $\kappa$\+presentable
object $T\in\sK$.
 Any $\kappa$\+direct limit of $\kappa$\+pure short exact sequences
is a $\kappa$\+pure short exact sequence.
 It is not immediately obvious that the collection of all
$\kappa$\+pure short exact sequences defines an exact structure
on~$\sK$; this is the result of part~(a) of the next
Proposition~\ref{kappa-pure-exact-structure-proposition}.
 We refer to the papers~\cite{AR2} and~\cite[Section~4]{Plce}
for some details.

 Let $R$ be an associative ring.
 One important particular case of the construction of the class
$\varinjlim^{(\kappa)}\sS$ occurs when $\sK=\Modr R$ is the module
category and $\sS$ is the class of all free (or projective)
$R$\+modules with less than~$\kappa$ generators.
 An $R$\+module is said to be \emph{$\kappa$\+flat} if it can
be presented as a $\kappa$\+direct limit of projective $R$\+modules,
or equivalently, as a $\kappa$\+direct limit of free $R$\+modules
with less than~$\kappa$ generators~\cite[Theorem~6.1]{Pper}.
 The class of all $\kappa$\+flat $R$\+modules is closed under
extensions and the kernels of epimorphisms in $\Modr R$; and any
short exact sequence of $\kappa$\+flat $R$\+modules is
a $\kappa$\+direct limit of split short exact sequences of
$\kappa$\+flat $R$\+modules~\cite[Lemma~6.2]{Pper}.

 More generally, let $\cS$ be a small preadditive category.
 Then a right $\cS$\+module (i.~e., an additive functor
$\cS^\sop\rarrow\Ab$) is said to be \emph{$\kappa$\+flat} if it
can be presented as a $\kappa$\+direct limit of projective
$\cS$\+modules, or equivalently, as a $\kappa$\+direct limit of
coproducts of less than~$\kappa$ representable functors.
 We will denote the full subcategory of $\kappa$\+flat modules by
$\Modrkapfl\cS\subset\Modr\cS$.
 The same results from~\cite[Section~6]{Pper} remain valid in this
context; so a short sequence in $\Modrkapfl\cS$ is exact in
the abelian category $\Modr\cS$ if and only if it is $\kappa$\+pure
exact in $\Modr\cS$, and if and only if it is $\kappa$\+pure exact
in the $\kappa$\+accessible additive category $\Modrkapfl\cS$.

\begin{prop} \label{kappa-pure-exact-structure-proposition}
 Let\/ $\sK$ be a $\kappa$\+accessible additive category.
 In this context: \par
\textup{(a)} In any $\kappa$\+pure short exact sequence\/ $0\rarrow K
\rarrow L\rarrow M\rarrow0$ in\/ $\sK$, the morphism $K\rarrow L$ is
a kernel of the morphism $L\rarrow M$, and the morphism $L\rarrow M$ is
a cokernel of the morphism $K\rarrow L$.
 The class of all $\kappa$\+pure short exact sequences defines an exact
structure on\/~$\sK$. \par
\textup{(b)} The $\kappa$\+pure short exact sequences in\/ $\sK$ are
precisely all the $\kappa$\+direct limits of split short exact sequences
in\/~$\sK$. \par
\textup{(c)} Assume that all coproducts exist in\/ $\sK$, and let\/
$\sS\subset\sK$ be a class of $\kappa$\+presentable objects closed
under $\kappa$\+small coproducts such that all the objects of\/ $\sK$
are $\kappa$\+direct limits of objects from\/~$\sS$.
 Denote by $\cS$ a small category equivalent to the full subcategory\/
$\sS\subset\sK$.
 Then there is a natural equivalence between the category\/ $\sK$ and
the category of $\kappa$\+flat right $\cS$\+modules,
$\sK\simeq\Modrkapfl\cS$.
 Under this equivalence, the $\kappa$\+pure exact sequences in\/ $\sK$
correspond precisely to the short sequences in\/ $\Modrkapfl\cS$ that
are exact in\/ $\Modr\cS$.
 So the $\kappa$\+pure exact structure on\/ $\sK$ corresponds to
the exact structure on\/ $\Modrkapfl\cS$ discussed above.
\end{prop}

\begin{proof}
 This is a $\kappa$\+version of
Proposition~\ref{pure-exact-structure-proposition}.
 Parts~(a\+-b): the argument is based on the results
of~\cite[Section~4]{Plce}.
 In the terminology of~\cite{Plce}, the $\kappa$\+pure exact structure
on a $\kappa$\+accessible additive category $\sK$ is given by the class
of all $\kappa$\+direct limits of split short exact sequences in $\sK$,
or equivalently, all $\kappa$\+direct limits of split short exact
sequences of $\kappa$\+presentable objects in~$\sK$.
 According to~\cite[Proposition~4.2]{Plce}, a morphism $L\rarrow M$
in $\sK$ is an admissible epimorphism in this exact structure if and
only if, for every $\kappa$\+presentable object $S$ in $\sK$, every
morphism $S\rarrow M$ can be lifted to a morphism $S\rarrow L$.

 Let $0\rarrow K\rarrow L\rarrow M\rarrow0$ be a short exact sequence
in the exact structure of~\cite[Section~4]{Plce}.
 Then the morphism $K\rarrow L$ is a kernel of the morphism $L\rarrow
M$; hence, in particular, for any $\kappa$\+presentable object
$S\in\sK$, the sequence of abelian groups $0\rarrow\Hom_\sK(S,K)\rarrow
\Hom_\sK(S,L)\rarrow\Hom_\sK(S,M)$ is left exact.
 On the other hand, by~~\cite[Proposition~4.2]{Plce}, every morphism
$S\rarrow M$ can be lifted to a morphism $S\rarrow L$; in other words,
the map $\Hom_\sK(S,L)\rarrow\Hom_\sK(S,M)$ is surjective.
 Thus $0\rarrow\Hom_\sK(S,K)\rarrow\Hom_\sK(S,L) \rarrow\Hom_\sK(S,M)
\rarrow0$ is a short exact sequence.
 So $0\rarrow K\rarrow L\rarrow M\rarrow0$ is a $\kappa$\+pure short
exact sequence in the sense of the definition above.

 Let us prove the converse implication.
 Let $0\rarrow K\rarrow L\rarrow M\rarrow0$ be a $\kappa$\+pure short
exact sequence in the sense of the definition above.
 This means that $0\rarrow\Hom_\sK(S,K)\rarrow\Hom_\sK(S,L)\rarrow
\Hom_\sK(S,M)\rarrow0$ is a short exact sequence for every
$\kappa$\+presentable object $S\in\sK$.
 So, in particular, the map $\Hom_\sK(S,L)\rarrow\Hom_\sK(S,M)$ is
surjective; in other words, every morphism $S\rarrow M$ can be lifted
to a morphism $S\rarrow M$.
 By~\cite[Proposition~4.2]{Plce}, the morphism $L\rarrow M$ is
an admissible epimorphism in the exact structure
of~\cite[Section~4]{Plce}.

 In order to show that the whole sequence $0\rarrow K\rarrow L\rarrow M
\rarrow0$ is short exact in the exact structure
of~\cite[Section~4]{Plce}, it remains to check that
the morphism $K\rarrow L$ is a kernel of the morphism $L\rarrow M$.
 Indeed, it follows from~\cite[Proposition~4.2]{Plce} that the morphism
$L\rarrow M$ (being an admissible epimorphism in an exact category
structure) has a kernel $K'\rarrow L$ in~$\sK$.
 It is clear that the composition $K\rarrow L\rarrow M$ vanishes
(since $K$ is a $\kappa$\+direct limit of $\kappa$\+presentable
objects and the composition $S\rarrow K\rarrow L\rarrow M$ vanishes
for every $\kappa$\+presentable object $S$ and any morphism
$S\rarrow K$ in~$\sK$).
 Hence the morphism $K\rarrow L$ factorizes naturally as $K\rarrow K'
\rarrow L$.
 For any $\kappa$\+presentable object $S\in\sK$, both the short
sequences of abelian groups $0\rarrow\Hom_\sK(S,K)\rarrow\Hom_\sK(S,L)
\rarrow\Hom_\sK(S,M)\rarrow0$ and $0\rarrow\Hom_\sK(S,K')\rarrow
\Hom_\sK(S,L)\rarrow\Hom_\sK(S,M)\rarrow0$ are exact.
 So the morphism $K\rarrow K'$ induces an isomorphism $\Hom_\sK(S,K)
\rarrow\Hom_\sK(S,K')$.
 By~\cite[Section~0.4, Definition~1.23, and Proposition~2.8(i)]{AR},
it follows that the morphism $K\rarrow K'$ is an isomorphism in~$\sK$.

 The proof of part~(c) is similar to the proof of
Proposition~\ref{pure-exact-structure-proposition}(c).
\end{proof}

 The \emph{$\kappa$\+pure exact structure} on $\sK$ is defined by
the collection of all $\kappa$\+pure short exact sequences, as per
Proposition~\ref{kappa-pure-exact-structure-proposition}(a).
 So one can speak about \emph{$\kappa$\+pure subobjects},
\emph{$\kappa$\+pure quotients}, \emph{$\kappa$\+pure acyclic
complexes}, \emph{$\kappa$\+pure-projective objects}, etc.\ (similarly
to Section~\ref{pure-exact-structure-secn}).

 The following proposition can be compared
with~\cite[Corollary~2.36]{AR}.
 Notice that, while the assumptions of the last assertion of
Proposition~\ref{kappa-lenzing-prop} are more restrictive than
those of~\cite[Corollary~2.36]{AR}, the conclusion is also stronger,
in that closedness under $\kappa$\+pure subobjects for
the cardinal~$\kappa$ appearing in the assumptions (rather than
for some possibly larger cardinal~$\lambda$) is claimed.

\begin{prop} \label{kappa-lenzing-prop}
 Let\/ $\sK$ be a $\kappa$\+accessible additive category and\/
$\sS\subset\sK$ be a class of $\kappa$\+presentable objects closed
under finite direct sums.
 Then the class of objects\/ $\sC=\varinjlim^{(\kappa)}\sS$ is closed
under $\kappa$\+pure quotients and $\kappa$\+pure extensions in\/~$\sK$.
 If the category\/ $\sK$ is abelian and locally $\kappa$\+presentable,
and the class\/ $\sS$ is closed under $\kappa$\+small coproducts in\/
$\sK$, then the full subcategory\/ $\sC$ is also closed under
$\kappa$\+pure subobjects in\/~$\sK$.
\end{prop}

\begin{proof}
 This is the $\kappa$\+version of~\cite[Proposition~2.2]{Len}
and Proposition~\ref{pure-exact-structure-subcategory}(a) above.
 Firstly, let $L\rarrow M$ be a $\kappa$\+pure epimorphism in~$\sK$.
 Assuming that $L\in\sC$, we have to show that $M\in\sC$.
 Indeed, let $T\in\sK$ be a $\kappa$\+presentable object.
 Then any morphism $T\rarrow M$ in $\sK$ can be lifted to a morphism
$T\rarrow L$, and the latter morphism factorizes through an object
from~$\sS$.
 Hence the original morphism $T\rarrow M$ also factorizes through
the same object from~$\sS$.
 By the criterion of Proposition~\ref{kappa-varinjlim-varinjlim-closed},
it follows that $M\in\sC$.

 Secondly, let $0\rarrow K\overset j\rarrow L\overset p\rarrow M
\rarrow0$ be a $\kappa$\+pure short exact sequence in $\sK$ with
$K$, $M\in\sC$.
 Let $T\in\sK$ be a $\kappa$\+presentable object and $f\:T\rarrow L$
be a morphism in~$\sK$.
 Then the composition $pf\:T\rarrow L\rarrow M$ factorizes through
an object $S\in\sS$, hence we obtain two morphisms $s\:T\rarrow S$
and $g\:S\rarrow M$ such that $pf=gs$.
 The morphism $g\:S\rarrow M$ can be lifted to a morphism
$g'\:S\rarrow L$; so we have $g=pg'$.
 Consider the difference $f-g's\:T\rarrow L$.
 Now $p(f-g's)=gs-gs=0$.
 Since the $\kappa$\+pure monomorphism~$j$ is the kernel of
the $\kappa$\+pure epimorphism~$p$ by
Proposition~\ref{kappa-pure-exact-structure-proposition}(a),
the morphism $f-g's$ factorizes through~$j$.
 Hence we have a morphism $h\:T\rarrow K$ such that $f-g's=jh$.
 The morphism~$h$ factorizes through an object $S'\in\sS$; so we obtain
two morphisms $s'\:T\rarrow S'$ and $h'\:S'\rarrow K$ such that
$h=h's'$.
 Finally, we see that the morphism $f\:T\rarrow L$ factorizes into
the composition $T\rarrow S\oplus S'\rarrow L$ of the morphism
$(s,s')\:T\rarrow S\oplus S'$ and the morphism
$(g',jh')\:S\oplus S'\rarrow L$.
 Indeed, $g's+jh's'=g's+jh=f$.
 Applying the criterion of
Proposition~\ref{kappa-varinjlim-varinjlim-closed} again,
we conclude that $L\in\sC$.

 The assertion about $\kappa$\+pure subobjects is provable by
the argument from~\cite[Proposition~2.2]{Len}.
 It is helpful to keep in mind that the class of all
$\kappa$\+presentable objects in $\sK$ is closed under colimits of
diagrams with less than~$\kappa$ vertices and
arrows~\cite[Proposition~1.16]{AR}.
\end{proof}

 The following proposition is the $\kappa$\+version of
Proposition~\ref{pure-exact-structure-subcategory}.

\begin{prop} \label{kappa-pure-exact-structure-subcategory}
 Let\/ $\sK$ be a $\kappa$\+accessible additive category and\/
$\sS\subset\sK$ be a class of $\kappa$\+presentable objects closed under
finite direct sums.
 Then \par
\textup{(a)} The full subcategory\/ $\sC=\varinjlim^{(\kappa)}\sS
\subset\sK$ inherits an exact category structure from
the $\kappa$\+pure exact structure on\/~$\sK$. \par
\textup{(b)} The inherited exact category structure on\/
$\sC\subset\sK$ coincides with the $\kappa$\+pure exact structure on
the $\kappa$\+accessible additive category\/~$\sC$.
 So, in this exact structure, a short sequence is exact if and only if
the functor\/ $\Hom_\sK(S,{-})$ takes it to a short exact sequence of
abelian groups for every object $S\in\sS$. \par
\textup{(c)} Assuming that all coproducts exist in\/ $\sK$ and
the class\/ $\sS$ is closed under $\kappa$\+small coproducts, denote by
$\cS$ a small category equivalent to the full subcategory\/
$\sS\subset\sK$.
 Then there is a natural equivalence between the category\/ $\sC$ and
the category of $\kappa$\+flat right $\cS$\+modules, $\sC\simeq
\Modrkapfl\cS$.
 Under this equivalence, the exact structure on\/ $\sC$ inherited from
the $\kappa$\+pure exact structure on\/ $\sK$ corresponds to
the exact structure on\/ $\Modrkapfl\cS$ discussed above.
\end{prop}

\begin{proof}
 Part~(a) holds by Proposition~\ref{kappa-lenzing-prop}.
 The proof of parts~(b\+-c) is similar to the proof of
Proposition~\ref{pure-exact-structure-subcategory}(b\+-c) and based on
Proposition~\ref{kappa-pure-exact-structure-proposition}.
 For the assertion that the category $\sC$ is $\kappa$\+accessible
(and the $\kappa$\+presentable objects of $\sC$ are precisely all
the direct summands of the objects from~$\sS$), use
Proposition~\ref{kappa-varinjlim-varinjlim-closed}.
\end{proof}

 The next corollary is a generalization of
Corollary~\ref{kernel-of-coproduct-onto-direct-limit-cor}.

\begin{cor} \label{kernel-of-coproduct-onto-direct-limit-kappa-version}
 Let\/ $\sK$ be a locally\/ $\kappa$\+presentable abelian category
and\/ $\sS\subset\sK$ be a class of\/ $\kappa$\+presentable objects
closed under $\kappa$\+small coproducts.
 Let $(H_i)_{i\in I}$ be a $\kappa$\+direct system of objects $H_i\in
\varinjlim^{(\kappa)}\sS$, indexed by a $\kappa$\+directed poset~$I$.
 Then the kernel of the natural epimorphism\/
$\coprod_{i\in I}H_i\rarrow\varinjlim_{i\in I}H_i$
\,\eqref{coproduct-onto-direct-limit-epimorphism}
belongs to\/ $\varinjlim^{(\kappa)}\sS$.
\end{cor}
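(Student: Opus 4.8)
The plan is to follow the purity argument from the proof of Corollary~\ref{kernel-of-coproduct-onto-direct-limit-cor}, with $\kappa$\+purity in place of ordinary purity and with Propositions~\ref{kappa-varinjlim-varinjlim-closed} and~\ref{kappa-lenzing-prop} used in place of their countable analogues. Write $\pi$ for the natural epimorphism~\eqref{coproduct-onto-direct-limit-epimorphism}, put $N=\ker\pi$, and consider the resulting short exact sequence
\[
 0\lrarrow N\lrarrow\coprod\nolimits_{i\in I}H_i\lrarrow
 \varinjlim\nolimits_{i\in I}H_i\lrarrow0
\]
in~$\sK$, whose right-hand epimorphism is~$\pi$. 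The goal is to show that $N\in\varinjlim^{(\kappa)}\sS$.

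The key step is to verify that the displayed short exact sequence is $\kappa$\+pure in~$\sK$. Since $\Hom_\sK(T,{-})$ is left exact, it suffices to check that $\Hom_\sK(T,\pi)$ is surjective for every $\kappa$\+presentable object $T\in\sK$. So let $g\:T\rarrow\varinjlim_{i\in I}H_i$ be a morphism. As $T$ is $\kappa$\+presentable and $I$ is $\kappa$\+directed, the functor $\Hom_\sK(T,{-})$ takes the $\kappa$\+direct limit $\varinjlim_{i\in I}H_i$ to the direct limit $\varinjlim_{i\in I}\Hom_\sK(T,H_i)$ of sets; hence $g$ factors as $g=h_{i_0}f$ for some index $i_0\in I$ and some morphism $f\:T\rarrow H_{i_0}$, where $h_{i_0}\:H_{i_0}\rarrow\varinjlim_{i\in I}H_i$ is the canonical colimit morphism. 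Let $\sigma_{i_0}\:H_{i_0}\rarrow\coprod_{i\in I}H_i$ be the coprojection; then $\pi\sigma_{i_0}=h_{i_0}$, so $\pi(\sigma_{i_0}f)=g$, which exhibits $g$ in the image of $\Hom_\sK(T,\pi)$. Thus the sequence is $\kappa$\+pure.

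It remains to assemble the conclusion. By hypothesis every $H_i$ belongs to $\varinjlim^{(\kappa)}\sS$ and the class $\sS$ is closed under $\kappa$\+small coproducts, so Proposition~\ref{kappa-varinjlim-varinjlim-closed} gives $\coprod_{i\in I}H_i\in\varinjlim^{(\kappa)}\sS$. Then $N$ is a $\kappa$\+pure subobject of an object of $\varinjlim^{(\kappa)}\sS$, whence $N\in\varinjlim^{(\kappa)}\sS$ by Proposition~\ref{kappa-lenzing-prop}, as required. I do not anticipate a real obstacle; the one point that needs care — and the only place where the standing hypotheses are genuinely used — is the closure of $\varinjlim^{(\kappa)}\sS$ under the arbitrary coproduct $\coprod_{i\in I}H_i$, which is exactly what closure of $\sS$ under $\kappa$\+small coproducts secures through Proposition~\ref{kappa-varinjlim-varinjlim-closed}. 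Alternatively, one could imitate~\cite[proof of Proposition~4.1]{BPS} and present $N$ directly as a $\kappa$\+direct limit of $\kappa$\+small coproducts of copies of the objects~$H_i$, then conclude again by Proposition~\ref{kappa-varinjlim-varinjlim-closed}.
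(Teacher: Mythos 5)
Your proposal is correct and follows the first of the two routes the paper itself sketches for this corollary: establish that the epimorphism~\eqref{coproduct-onto-direct-limit-epimorphism} is $\kappa$\+pure (using $\kappa$\+presentability of test objects against the $\kappa$\+directed colimit), get the coproduct into $\varinjlim^{(\kappa)}\sS$ via Proposition~\ref{kappa-varinjlim-varinjlim-closed}, and conclude by closure under $\kappa$\+pure subobjects from Proposition~\ref{kappa-lenzing-prop}. The details you supply (left exactness of $\Hom$, factorization through some $H_{i_0}$, lifting along the coprojection) are exactly the ones the paper leaves implicit, and your closing remark matches the paper's alternative argument via~\cite[proof of Proposition~4.1]{BPS}.
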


\begin{proof}
 Both the proofs of
Corollary~\ref{kernel-of-coproduct-onto-direct-limit-cor} can be
readily adopted to the situation at hand: one can use either
Proposition~\ref{kappa-lenzing-prop}, or the construction
from~\cite[proof of Proposition~4.1]{BPS} together with
Proposition~\ref{kappa-varinjlim-varinjlim-closed}.
\end{proof}

 Let $\sK$ be an abelian category with exact functors of
$\kappa$\+direct limits, and let $n\ge1$ be an integer.
 We will say that an object $S\in\sK$ is \emph{of type
$\kappaP_n$} if the functors $\Ext_\sK^i(S,{-})\:\sK\rarrow\Ab$
preserve $\kappa$\+direct limits for $0\le i\le n-1$.
 So the objects of type $\kappaP_1$ are, by the definition,
the $\kappa$\+presentable ones.

 One can further define types $\kappaP_0$ and $\kappaP_\infty$,
similarly to the discussion in
Section~\ref{direct-limit-closures-of-FP-classes}, but we will not
need these definitions.
 The following proposition refers to objects of type $\kappaP_2$,
which form a subclass of the class of $\kappa$\+presentable objects.

\begin{prop} \label{kappaP2-varinjlim-extension-closed}
 Let\/ $\sK$ be a locally\/ $\kappa$\+presentable abelian
category and\/ $\sS$ be a class of (some) objects of type
$\kappaP_2$ closed under extensions in\/~$\sK$.
 Then the class of objects\/ $\varinjlim^{(\kappa)}\sS$ is also closed
under extensions in\/~$\sK$.
\end{prop}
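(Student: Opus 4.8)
The plan is to run the proof of Proposition~\ref{FP2-varinjlim-extension-closed} mutatis mutandis, replacing $\varinjlim$ by $\varinjlim^{(\kappa)}$ and ``direct limit'' by ``$\kappa$\+direct limit'' throughout. For classes $\sX$, $\sY\subset\sK$ write $\sX*\sY$ for the class of all objects $Z$ fitting into a short exact sequence $0\rarrow X\rarrow Z\rarrow Y\rarrow0$ with $X\in\sX$ and $Y\in\sY$; what has to be shown is $\varinjlim^{(\kappa)}\sS*\varinjlim^{(\kappa)}\sS\subset\varinjlim^{(\kappa)}\sS$. I would deduce this from the chain of inclusions
\[
 \varinjlim^{(\kappa)}\sS*\varinjlim^{(\kappa)}\sS
 \subset\varinjlim^{(\kappa)}\bigl(\varinjlim^{(\kappa)}\sS*\sS\bigr)
 \subset\varinjlim^{(\kappa)}\varinjlim^{(\kappa)}(\sS*\sS)
 \subset\varinjlim^{(\kappa)}\varinjlim^{(\kappa)}\sS
 =\varinjlim^{(\kappa)}\sS ,
\]
where the third inclusion is monotonicity applied to $\sS*\sS\subset\sS$ (valid since $\sS$ is closed under extensions) and the final equality is Proposition~\ref{kappa-varinjlim-varinjlim-closed} applied to the class~$\sS$ of $\kappa$\+presentable objects.

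For the first inclusion I would prove the general statement that $\sX*\varinjlim^{(\kappa)}\sY\subset\varinjlim^{(\kappa)}(\sX*\sY)$ for arbitrary $\sX$, $\sY\subset\sK$. Given $0\rarrow X\rarrow Z\rarrow M\rarrow0$ with $X\in\sX$ and $M=\varinjlim^{(\kappa)}_{i\in I}Y_i$, $Y_i\in\sY$, I would form the pullbacks $Z_i=Z\times_M Y_i$, obtaining short exact sequences $0\rarrow X\rarrow Z_i\rarrow Y_i\rarrow0$, so that $Z_i\in\sX*\sY$. Since $\kappa$\+directed colimits are exact in the locally $\kappa$\+presentable abelian category~$\sK$ \cite[Proposition~1.59]{AR}, passing to the $\kappa$\+direct limit over~$I$ and comparing with the original sequence via the five lemma identifies $Z$ with $\varinjlim^{(\kappa)}_{i\in I}Z_i\in\varinjlim^{(\kappa)}(\sX*\sY)$. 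Taking $\sX=\varinjlim^{(\kappa)}\sS$ and $\sY=\sS$ gives the first inclusion.

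The second inclusion follows, upon applying $\varinjlim^{(\kappa)}$ to both sides, from the claim $(\varinjlim^{(\kappa)}\sX)*\sT\subset\varinjlim^{(\kappa)}(\sX*\sT)$, valid whenever every $T\in\sT$ has the property that $\Ext^1_\sK(T,{-})$ preserves $\kappa$\+direct limits; here $\sX=\sT=\sS$, and the hypothesis is exactly that the objects of $\sS$ are of type $\kappaP_2$. To prove this claim I would take $0\rarrow N\rarrow Z\rarrow T\rarrow0$ with $N=\varinjlim^{(\kappa)}_{i\in I}N_i$, $N_i\in\sX$, $T\in\sT$, use $\Ext^1_\sK(T,N)\simeq\varinjlim_{i\in I}\Ext^1_\sK(T,N_i)$ to realize the class of this extension as the image of some $\xi_{i_0}\in\Ext^1_\sK(T,N_{i_0})$, and then, for each $j\ge i_0$, push the corresponding sequence $0\rarrow N_{i_0}\rarrow Z_{i_0}\rarrow T\rarrow0$ out along $N_{i_0}\rarrow N_j$ to obtain $0\rarrow N_j\rarrow Z_j\rarrow T\rarrow0$ with $Z_j\in\sX*\sT$; since pushout commutes with colimits and $\{j\in I:j\ge i_0\}$ is $\kappa$\+directed and cofinal in~$I$, one gets $Z=\varinjlim^{(\kappa)}_{j\ge i_0}Z_j\in\varinjlim^{(\kappa)}(\sX*\sT)$.

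The step I expect to require the most care is the pushout argument of the last paragraph: one must check that restricting a $\kappa$\+directed poset to an upward cone preserves $\kappa$\+directedness and cofinality, and that the identification $Z=\bigl(\varinjlim^{(\kappa)}_{j\ge i_0}N_j\bigr)\sqcup_{N_{i_0}}Z_{i_0}=\varinjlim^{(\kappa)}_{j\ge i_0}\bigl(N_j\sqcup_{N_{i_0}}Z_{i_0}\bigr)$ is legitimate in~$\sK$ (interchange of a $\kappa$\+directed colimit with a pushout). Everything else is a direct transcription of the $\aleph_0$\+case, so I do not anticipate any genuine obstacle beyond this bookkeeping.
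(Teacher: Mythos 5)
Your proposal is correct and is essentially the paper's own argument: the paper proves this proposition by transcribing the chain of inclusions from Proposition~\ref{FP2-varinjlim-extension-closed} with $\varinjlim$ replaced by $\varinjlim^{(\kappa)}$, resting on exactly your two key claims (i)~$\sX*\varinjlim^{(\kappa)}\sY\subset\varinjlim^{(\kappa)}(\sX*\sY)$ for arbitrary classes and (ii)~$(\varinjlim^{(\kappa)}\sX)*\sT\subset\varinjlim^{(\kappa)}(\sX*\sT)$ when $\Ext^1_\sK(T,{-})$ preserves $\kappa$\+direct limits, established by the same pullback and pushout arguments borrowed from \cite[proof of Proposition~8.4]{PS5}. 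The only (harmless) deviation is that you apply $\sS*\sS\subset\sS$ before collapsing the double $\varinjlim^{(\kappa)}$ via Proposition~\ref{kappa-varinjlim-varinjlim-closed}, which spares you the paper's appeal to the closure of $\kappa$\+presentable objects under extensions.
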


\begin{proof}
 This is a straightforward generalization of
Proposition~\ref{FP2-varinjlim-extension-closed}, provable in
the similar way.
 The class of all $\kappa$\+presentable objects in $\sK$ is closed
under extensions by~\cite[Lemma~A.4]{Sto0} (the running assumption
in~\cite{Sto0} that the category is Grothendieck is not needed for
this lemma).

 The following observations play the key role.
 Let $\sK$ be an abelian category with exact functors of
$\kappa$\+direct limits.
 Then
\begin{enumerate}
\renewcommand{\theenumi}{\roman{enumi}}
\item for any two classes of objects $\sX$ and $\sY$ in $\sK$, one has
$\sX*\varinjlim^{(\kappa)}\sY\subset\varinjlim^{(\kappa)}(\sX*\sY)$;
\item for any class of objects $\sX\subset\sK$ and any class of 
objects $\sT\subset\sK$ such that the functor
$\Ext_\sK^1(T,{-})\:\sK\rarrow\Ab$ preserves $\kappa$\+direct limits
for all $T\in\sT$, one has
$(\varinjlim^{(\kappa)}\sX)*\sT\subset\varinjlim^{(\kappa)}(\sX*\sT)$.
\end{enumerate}
 Once again, the arguments from~\cite[proof of Proposition~8.4]{PS5}
apply.
\end{proof}

 The next theorem is a generalization of
Theorem~\ref{neeman-stovicek-pure-projective-pure-acyclic}
suggested in~\cite[Remark~4.11]{BHP}.

\begin{thm} \label{kappa-neeman-stovicek-theorem}
 Let\/ $\sK$ be a locally $\kappa$\+presentable abelian category.
 Let $P^\bu$ be a complex of $\kappa$\+pure-projective objects,
and let $X^\bu$ be a $\kappa$\+pure acyclic complex in\/~$\sK$.
 Then any morphism of complexes $P^\bu\rarrow X^\bu$ is homotopic
to zero.
\end{thm}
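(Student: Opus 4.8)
The plan is to identify the $\kappa$\+pure world inside $\sK$ with the ordinary pure world over a ``ring with many objects'', and then invoke the already proved Neeman--\v St\!'ov\'\i\v cek theorem (Theorem~\ref{neeman-stovicek-pure-projective-pure-acyclic}). First I would take $\sS\subset\sK$ to be the class of \emph{all} $\kappa$\+presentable objects; it is skeletally small and closed under $\kappa$\+small coproducts, and by Proposition~\ref{kappa-varinjlim-varinjlim-closed} one has $\varinjlim^{(\kappa)}\sS=\sK$, since every morphism into an object of $\sK$ from a $\kappa$\+presentable object $T$ factorizes trivially (through $T$ itself). Letting $\cS$ be a small additive category equivalent to $\sS$, Proposition~\ref{kappa-pure-exact-structure-prop}(c) then provides an equivalence of additive categories $\sK\simeq\Modrkapfl\cS$ which identifies the $\kappa$\+pure exact structure on $\sK$ with the exact structure on $\Modrkapfl\cS$ inherited from the abelian exact structure of $\Modr\cS$, and sends $\sS$ to the representable $\cS$\+modules. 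This equivalence induces equivalences $\bC(\sK)\simeq\bC(\Modrkapfl\cS)$ and $\bH(\sK)\simeq\bH(\Modrkapfl\cS)$, so it is enough to prove the corresponding statement for complexes in $\Modrkapfl\cS$.

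Next I would describe what the two complexes become under this equivalence. A $\kappa$\+pure short exact sequence in $\sK$ corresponds to a short exact sequence in $\Modr\cS$ with all three terms $\kappa$\+flat; by the analogue for $\cS$ of \cite[Lemma~6.2]{Pper}, any such sequence is a $\kappa$\+direct limit of split short exact sequences, hence pure in $\Modr\cS$. Thus the $\kappa$\+pure acyclic complex $X^\bu$ is carried to a complex of $\cS$\+modules which is pure acyclic in $\Modr\cS$. On the other hand, a $\kappa$\+pure-projective object of $\sK$ is, by definition, a projective object of the exact category $(\sK,\,\kappa\text{-pure})\simeq\Modrkapfl\cS$; choosing for it an epimorphism from a projective $\cS$\+module in $\Modr\cS$, the kernel is $\kappa$\+flat because $\Modrkapfl\cS$ is closed under kernels of epimorphisms, so this epimorphism is a conflation of $\Modrkapfl\cS$ and therefore splits. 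Hence every $\kappa$\+pure-projective object of $\sK$ corresponds to a direct summand of a projective $\cS$\+module, i.e.\ to a projective, and in particular pure-projective, $\cS$\+module; so $P^\bu$ is carried to a complex of pure-projective $\cS$\+modules.

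It then remains to apply Theorem~\ref{neeman-stovicek-pure-projective-pure-acyclic} to the category $\Modr\cS$: the image of the given chain map $P^\bu\rarrow X^\bu$ goes from a complex of pure-projective $\cS$\+modules to a pure acyclic complex of $\cS$\+modules, hence is homotopic to zero in $\bC(\Modr\cS)$. Its contracting homotopy consists of morphisms $P^i\rarrow X^{i-1}$ between objects lying in the full subcategory $\Modrkapfl\cS$, so it is already a homotopy in $\bC(\Modrkapfl\cS)$; transporting it back along the equivalence yields a null-homotopy of the original morphism in $\bC(\sK)$. The one point requiring care --- and the main obstacle --- is that Theorem~\ref{neeman-stovicek-pure-projective-pure-acyclic}, though stated for an associative ring~$R$, must here be used for $\Modr\cS$, the category of modules over a small preadditive category (equivalently, over a nonunital ring with enough idempotents); this causes no real difficulty, since the proofs in \cite{Neem,Sto} and the generalization in \cite[Theorem~1.1]{BCE} never use the unit and apply to Grothendieck categories such as $\Modr\cS$.
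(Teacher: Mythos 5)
Your proposal is correct and follows essentially the same route as the paper: both reduce via Proposition~\ref{kappa-pure-exact-structure-prop}(c) to the exact category $\Modrkapfl\cS$ of $\kappa$-flat modules over the small category $\cS$ of all $\kappa$-presentable objects, and both identify the $\kappa$-pure-projective objects of $\sK$ with the projective $\cS$-modules. The only (harmless) divergence is the final citation: the paper notes that $\kappa$-flat modules are flat and invokes \cite[Theorem~8.6(iii)$\Rightarrow$(i)]{Neem} or \cite[Theorem~4.4]{BHP} for complexes of projectives mapping to acyclic complexes of flats with flat cocycles, whereas you pass through the pure exact structure on $\Modr\cS$ and invoke the many-object version of Theorem~\ref{neeman-stovicek-pure-projective-pure-acyclic} --- two formulations of the same underlying result, with the many-object extension being equally necessary (and equally routine) in both cases.
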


\begin{proof}
 Let $\sS$ be the class of all $\kappa$\+presentable objects in~$\sK$.
 Applying Proposition~\ref{kappa-pure-exact-structure-proposition}(c),
we conclude that the exact category $\sK$ with the $\kappa$\+pure
exact structure is equivalent to the exact category $\Modrkapfl\cS$ of
$\kappa$\+flat right $\cS$\+modules.

 Notice that all the projective $\cS$\+modules are $\kappa$\+flat, and
the kernel of any surjective morphism from a projective $\cS$\+module
to a $\kappa$\+flat one is $\kappa$\+flat by~\cite[Lemma~6.2(a)]{Pper}.
 It follows that there are enough projective objects in the exact
category $\Modrkapfl\cS$, and these projective objects are precisely
the projective $\cS$\+modules.

 Hence the equivalence of exact categories $\sK\simeq\Modrkapfl\cS$
takes the $\kappa$\+pure-projective objects of $\sK$ to the projective
$\cS$\+modules.
 It also takes $\kappa$\+pure acyclic complexes to acyclic complexes
in the exact category $\Modrkapfl\cS$, which means acyclic complexes of
$\kappa$\+flat $\cS$\+modules with $\kappa$\+flat modules of cocycles.
 As all $\kappa$\+flat modules are flat, it remains to
apply~\cite[Theorem~8.6(iii)$\Rightarrow$(i)]{Neem}
or~\cite[Theorem~4.4]{BHP}.
\end{proof}

 The following assertion extends
Proposition~\ref{stovicek-countable-hill-prop} to arbitrary regular
cardinals~$\kappa$.

\begin{prop} \label{stovicek-general-hill-prop}
 Let $\sK$ be a locally $\kappa$\+presentable Grothendieck category.
 Let\/ $\sS\subset\sK$ be a class of\/ $\kappa$\+presentable objects
closed under transfinitely iterated extensions of families of objects
of cardinality\/~$<\kappa$ (i.~e., indexed by ordinals\/
$\alpha<\kappa$).
 Let $A^\bu\in\bC(\Fil(\sS))$ be a complex in\/ $\sK$ whose terms are\/
$\sS$\+filtered objects.
 Then the complex $A^\bu$, viewed as an object of the abelian category
of complexes\/ $\bC(\sK)$, is filtered by bounded below complexes
whose terms belong to\/~$\sS$.
\end{prop}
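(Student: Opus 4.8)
The plan is to adapt the proof of Proposition~\ref{stovicek-countable-hill-prop}, that is, of \cite[Proposition~4.3]{Sto0}, from $\aleph_0$ to an arbitrary regular cardinal~$\kappa$; the only extra ingredient needed is the general form of the Hill lemma (\cite[Theorem~2.1]{Sto0}, see also \cite[Theorem~7.10]{GT}), which is valid for $\kappa$\+indexed filtrations in arbitrary Grothendieck categories. First I would record two preliminary observations. Since $\sK$ is locally $\kappa$\+presentable, there is up to isomorphism only a set of $\kappa$\+presentable objects in~$\sK$, so $\sS$ may be assumed to be a set; and the hypothesis that $\sS$ is closed under transfinitely iterated extensions indexed by ordinals $\alpha<\kappa$ says precisely that any object which is $\sS$\+filtered by a family of objects of cardinality $<\kappa$ again belongs to~$\sS$. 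Now fix, for every $n\in\boZ$, an $\sS$\+filtration of the term~$A^n$. The Hill lemma then produces a family $\mathcal{H}_n$ of subobjects of $A^n$ which is closed under arbitrary sums and intersections, contains all the members of the chosen filtration, satisfies that $P/N$ is $\sS$\+filtered for all $N\subseteq P$ in $\mathcal{H}_n$, and --- the quantitative clause we will use repeatedly --- has the property that for every $N\in\mathcal{H}_n$ and every $<\kappa$\+sized subset $Y$ of $A^n$ there exists $P\in\mathcal{H}_n$ with $N\subseteq P$, with $Y$ contained in $P$, and with $P/N$ being $\sS$\+filtered by a family of cardinality $<\kappa$; by the closure hypothesis, such $P/N$ lies in~$\sS$. (Here ``element'', ``$<\kappa$\+sized subset'' and ``generated'' are understood in the sense appropriate to a locally presentable category, exactly as in \cite{Sto0}.)

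Next I would call a subcomplex $C^\bu\subseteq A^\bu$ \emph{$\mathcal{H}$\+closed} if $C^n\in\mathcal{H}_n$ for all $n\in\boZ$; since each $\mathcal{H}_n$ is closed under arbitrary sums and intersections, so is the collection of $\mathcal{H}$\+closed subcomplexes inside $\bC(\sK)$. The combinatorial heart of the argument is a ``one\+step'' construction: given an $\mathcal{H}$\+closed subcomplex $C^\bu\subsetneq A^\bu$ and an element $x$ of $A^{n_0}$ not lying in $C^{n_0}$, I would build an $\mathcal{H}$\+closed subcomplex $C'^\bu\supseteq C^\bu$ with $x\in C'^{n_0}$, with $C'^n=C^n$ for all $n<n_0$, and with the quotient $C'^\bu/C^\bu$ a bounded below complex whose terms belong to~$\sS$. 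One puts $C'^n=C^n$ for $n<n_0$, chooses $C'^{n_0}\in\mathcal{H}_{n_0}$ by applying the quantitative clause of the Hill lemma to $N=C^{n_0}$ and $Y=\{x\}$, and then, recursively for $n=n_0,\,n_0+1,\,n_0+2,\dots$, enlarges $C^{n+1}$ to some $C'^{n+1}\in\mathcal{H}_{n+1}$ containing $C^{n+1}$ together with a $<\kappa$\+sized set $Y\subseteq A^{n+1}$ generating the image $d(C'^n)$ modulo $C^{n+1}$ (such a $Y$ exists because $C'^n/C^n$, being $\sS$\+filtered by a family of cardinality $<\kappa$, is $<\kappa$\+generated, and $d$ carries a $<\kappa$\+sized generating set of $C'^n$ over $C^n$ to such a set). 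Since the differentials raise degree, nothing below degree~$n_0$ is disturbed; hence $C'^\bu$ is a subcomplex, $C'^\bu/C^\bu$ vanishes in degrees $<n_0$, and for $n\ge n_0$ each term $C'^n/C^n$ is $\sS$\+filtered by a family of cardinality $<\kappa$, so belongs to~$\sS$.

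Finally I would assemble the filtration of $A^\bu$ in $\bC(\sK)$ by transfinite recursion: set $\Phi_0=0$, take $\Phi_\beta=\bigcup_{\gamma<\beta}\Phi_\gamma$ at limit ordinals, and at successor steps apply the one\+step construction to $\Phi_\beta$ and a suitably chosen element, using a bookkeeping scheme --- fix in advance an epimorphism onto $A^\bu$ from a coproduct of (shifted) generators and arrange that the $\beta$\+th summand is absorbed into $\Phi_{\beta+1}$ --- to guarantee that $\Phi_\lambda=A^\bu$ for large enough~$\lambda$. Unions of $\mathcal{H}$\+closed subcomplexes are $\mathcal{H}$\+closed, so the chain is smooth at limits, and the successive quotients $\Phi_{\beta+1}/\Phi_\beta$ are bounded below complexes with terms in~$\sS$ by the one\+step construction; this exhibits $A^\bu$ as filtered by such complexes, as desired. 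The step I expect to require the most care is the upward recursion in the one\+step construction: one must check at every degree that the Hill property applies with a genuinely $<\kappa$\+sized generating set and that the resulting subquotients land in $\sS$ itself rather than merely in $\Fil(\sS)$ --- this is precisely where both the hypothesis on $\sS$ and the quantitative clause of the Hill lemma are essential. This is exactly the point at which the argument of \cite[Proposition~4.3]{Sto0} is reproduced almost verbatim; working in an arbitrary locally $\kappa$\+presentable Grothendieck category rather than a module category causes no additional difficulty, since \cite[Theorem~2.1]{Sto0} is already stated at that level of generality.
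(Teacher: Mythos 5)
Your proposal is correct and follows exactly the route the paper takes: the paper's proof of this proposition consists of a citation of \cite[(proof of) Proposition~4.3]{Sto0}, based on the Hill lemma \cite[Theorem~2.1]{Sto0}, and your argument is a faithful reconstruction of that proof (degreewise Hill families, the one-step upward enlargement along the differential producing bounded below subquotients with terms in $\sS$, and the transfinite bookkeeping). The point you flag as delicate --- that the hypothesis on $\sS$ being closed under $<\kappa$-indexed transfinitely iterated extensions is what forces the Hill subquotients into $\sS$ itself --- is indeed the place where the hypothesis of the proposition is used.
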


\begin{proof}
 This is still~\cite[(proof of) Proposition~4.3]{Sto0}.
 Once again, the argument is based on
the Hill lemma~\cite[Theorem~2.1]{Sto0}.
\end{proof}

\begin{rem} \label{proposition-A-ii-remark}
 Let $\sK=\Modr R$ be the module category and $\sS\subset\Modr R$ be
a class of objects of type $\kappaP_2$ (or $\kappaP_\infty$) closed
under transfinitely iterated extensions indexed by ordinals smaller
than~$\kappa$.
 Then Propositions~\ref{kappa-varinjlim-varinjlim-closed}
and~\ref{kappaP2-varinjlim-extension-closed} tell that
the class of modules $\varinjlim^{(\kappa)}\sS$ is closed under
extensions, coproducts, and $\kappa$\+direct limits in $\Modr R$.
 But is it closed under transfinitely iterated extensions?
 For $\kappa=\aleph_0$ we said, in the proof of Proposition~A(ii) in
Section~\ref{pure-exact-structure-secn}, that transfinitely
iterated extensions are built up from extensions and direct limits.
 But this requires all direct limits (of chains of monomorphisms)
and \emph{not} only $\kappa$\+direct limits.

 On the other hand, does the analogue of
Proposition~\ref{weakly-deconstructible-prop} hold for~$\kappa$?
 In other words, is the class $\varinjlim^{(\kappa)}\sS$ weakly
deconstructible?
 Arguing similarly to the proof of
Proposition~\ref{weakly-deconstructible-prop} and using
Proposition~\ref{kappa-pure-exact-structure-subcategory}, it would be
sufficient to know that the class of $\kappa$\+flat $\cS$\+modules
is weakly deconstructible.
 But is this true?

 Furthermore, the class $\sC=\varinjlim^{(\kappa)}\sS$ is
a \emph{Kaplansky class} in the sense of~\cite{Gil0,ST0}: for
any regular cardinal~$\lambda$ there exists a regular cardinal~$\mu$
such that for any object $C\in\sC$ and any $\lambda$\+presentable
subobject $X\subset C$ there exists a $\mu$\+presentable subobject
$K\subset C$ such that $X\subset K$ and both the objects $K$ and
$C/K$ belong to~$\sC$.
 This is provable using Proposition~\ref{kappa-lenzing-prop} and
a suitable version of purification procedure (cf.~\cite[first
paragraph of the proof of Theorem~5]{BBE}, \cite[Lemma~10.5]{GT},
or~\cite[Lemma~4.1]{CS}).
 Still, the class $\sC$ is \emph{not} closed under direct limits
in general, but only under $\kappa$\+direct limits;
so~\cite[Lemma~6.9]{HT} or~\cite[Lemma~2.5(2)]{ST0} cannot be used
in order to deduce deconstructibility of~$\sC$
(cf.~\cite[Sections~10.1\+-10.2]{GT}).

 Let us point out some partial answers to the questions above that
are available in the literature.
 To begin with, we observe that the answers to the questions in
the first two paragraphs of this remark \emph{cannot} both be always
positive: the class $\varinjlim^{(\kappa)}\sS$ is \emph{not}
deconstructible in general.
 Certainly not in the context of module categories $\sK=\Modr\cT$
over small preadditive categories (or ``nonunital rings with enough
idempotents'')~$\cT$.
 Indeed, let $\sS$ be the class of projective $\cT$\+modules with
less than~$\kappa$ generators; so $\varinjlim^{(\kappa)}\sS$ is
the class of $\kappa$\+flat $\cT$\+modules.
 Suppose that the class of $\kappa$\+flat $\cT$\+modules is
deconstructible in $\Modr\cT$.
 Then, by Theorem~\ref{deconstructible-class-is-grothendieck-type},
the exact category $\Modrkapfl\cT$ would be of Grothendieck type.
 By Theorem~\ref{grothendieck-type-enough-injectives}, it would follow
that there are enough injective objects in the exact category
$\Modrkapfl\cT$.

 Take $\cT$ to be a small category equivalent to the category of
$\kappa$\+presented $R$\+modules for a given ring~$R$.
 Then, by Proposition~\ref{kappa-pure-exact-structure-proposition}(c),
the exact category $\Modr R$ with the $\kappa$\+pure exact
structure is equivalent to\/ $\Modrkapfl\cT$.
 So it would follow that there exist enough $\kappa$\+pure-injective
$R$\+modules.
 This is known to be \emph{not} true.
 See~\cite[Proposition~1.4, Remark~1.6, and Example~1.7]{ST}
(also~\cite[Theorem~6.3]{CS}).
 Therefore, the assertion of Proposition~A(ii) is \emph{cannot} be
extended straightforwardly to regular cardinals $\kappa>\aleph_0$
in general.

 Nevertheless, the class of $\kappa$\+flat $R$\+modules may be
deconstructible for \emph{some} cardinals $\kappa>\aleph_0$.
 In particular, \cite[Theorem~3.3]{ST} claims that all $\kappa$\+flat
$R$\+modules are projective if $\kappa$~is greater or equal to
a strongly compact cardinal that is greater than the cardinality of
a ring~$R$.
 So the class of $\kappa$\+flat $R$\+modules is deconstructible in
this case by Kaplansky's theorem~\cite[Corollary~7.14]{GT}.

 On the other hand, consider the case of the cardinal $\kappa=\aleph_1$.
 In this context, the class of \emph{flat Mittag-Leffler
modules}~\cite{RG,Dr,HT,ST0,Sar} plays an important role.
 Any flat Mittag-Leffler module is an $\aleph_1$\+direct limit
(in other words, an $\aleph_1$\+direct union) of its projective
submodules~\cite[Corollary~2.10]{HT}, \cite[Corollary~3.19]{GT};
so any flat Mittag-Leffler module is $\aleph_1$\+flat.
 The converse is not true in general~\cite[Example~3.5]{ST}.
 However, over a left Noetherian ring $R$, the class of flat
Mittag-Leffler right $R$\+modules is closed under $\aleph_1$\+pure
epimorphic images~\cite[Proposition~3.4]{ST}, hence under
$\aleph_1$\+direct limits; so it coincides with the class of
$\aleph_1$\+flat right $R$\+modules.

 Over any ring, the class of flat Mittag-Leffler modules is closed
under pure submodules and transfinitely iterated
extensions~\cite[Corollary~3.20(a)]{GT}, and it is a Kaplansky
class~\cite[Theorem~1.2(i) or~3.3]{ST0}, \cite[Theorem~10.6]{GT}.
 However, if a ring $R$ is not right perfect, then the class of
flat Mittag-Leffler right $R$\+modules is \emph{not}
deconstructible~\cite[Corollary~7.3]{HT}, \cite[Theorem~10.13]{GT};
it fact, it is not even precovering~\cite[Theorem~3.3]{Sar}
(cf.~\cite[Theorem~7.21]{GT}).
 So, if $R$ is not right perfect, then the class of flat Mittag-Leffler
modules is not weakly deconstructible.

 We can conclude that, for any ring $R$ that is left Noetherian but
not right perfect, the class of $\aleph_1$\+flat right $R$\+modules
is not weakly deconstructible.
\end{rem}

\Section{Generalized Flat/Projective and Fp-projective Periodicity~II}
\label{generalized-fp-projective-periodicity-II-secn}

 In this section we prove Theorem~A.
 It is restated below as
Theorem~\ref{theorem-A-detailed-in-two-parts}(a).
 The argument is a more complicated version of the proof of Theorem~0(a)
given in Section~\ref{generalized-fp-projective-periodicity-I-secn}.
 It still follows the ideas of the proof of~\cite[Theorem~0.14
or~4.1]{BHP} together with~\cite[Remark~4.11]{BHP}.

\begin{thm} \label{theorem-A-detailed-in-two-parts}
 Let\/ $\sK$ be a Grothendieck category, and let $\kappa$~be a regular
cardinal such that\/ $\sK$ is a locally $\kappa$\+presentable category.
 Let\/ $\sS\subset\sK$ be a class of (some) $\kappa$\+presentable
objects closed under transfinitely iterated extensions indexed by
ordinals smaller than~$\kappa$.
 Put\/ $\sC=\varinjlim^{(\kappa)}\sS\subset\sK$, and denote by\/
$\sA=\Fil(\sS)^\oplus$ the class of all direct summands of transfinitely
iterated extensions of objects from\/ $\sS$ in\/~$\sK$. \par
\textup{(a)} Assume that the class\/ $\sC$ is deconstructible
in\/~$\sK$.
 Put\/ $\sB'=\sS^{\perp_{\ge1}}\cap\sC$ and
$\sA'=\sC\cap{}^{\perp_1}\sB'=\sC\cap{}^{\perp_{\ge1}}\sB'$
 (so\/ $\sA\subset\sA'\subset\sC$).
 Then, in any acyclic complex of objects from\/ $\sA$ with the objects
of cocycles belonging to\/ $\sC$, the objects of cocycles actually
belong to\/~$\sA'$. \par
\textup{(b)} Put\/ $\sB=\sS^{\perp_1}\cap\sC=\sA^{\perp_1}\cap\sC$
(so\/ $\sB'\subset\sB\subset\sC$).
 Let $A^\bu$ be a complex in\/ $\sK$ with the terms belonging to\/
$\sA$, and let $X^\bu$ be an acyclic complex in\/ $\sK$ with
the terms belonging to\/ $\sB$ and the modules of cocycles also
belonging to\/~$\sB$.
 Then any morphism of complexes $A^\bu\rarrow X^\bu$ is homotopic
to zero. 
\end{thm}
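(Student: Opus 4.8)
The plan is to prove part~(b) first and then deduce part~(a) from it, following the two-step structure of the proof of Theorem~\ref{theorem-0a-detailed-in-two-parts}, but replacing the countable-cardinal tools by their $\kappa$\+versions from Sections~\ref{direct-limits-of-kappa-P-classes} and~\ref{exact-grothendieck-secn} and carrying out the homological computations inside the exact subcategory $\sC\subset\sK$.

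For part~(b): since $\sA=\Fil(\sS)^\oplus$ and the property ``homotopic to zero'' passes from a complex to its direct summands, I may assume that the terms of $A^\bu$ lie in $\Fil(\sS)$. Then Proposition~\ref{stovicek-general-hill-prop} (applicable because $\sS$ is closed under transfinitely iterated extensions indexed by ordinals $<\kappa$) filters $A^\bu$, regarded as an object of $\bC(\sK)$, by bounded below complexes with terms in $\sS$. By Lemma~\ref{ext-homotopy-hom-lemma} the group $\Hom_{\bH(\sK)}(A^\bu,X^\bu)$ is isomorphic to $\Ext^1_{\bC(\sK)}(A^\bu,X^\bu[-1])$ (the needed termwise vanishing of $\Ext^1_\sK$ holds because $\sA\subset{}^{\perp_1}(\sS^{\perp_1})$, while the terms of $X^\bu$ lie in $\sB\subset\sS^{\perp_1}$), so by the Eklof lemma (Lemma~\ref{eklof-lemma}) applied in the Grothendieck category $\bC(\sK)$ it suffices to show that every morphism of complexes $S^\bu\rarrow X^\bu$ with all $S^n\in\sS$ is homotopic to zero. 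Here $S^\bu$ is a complex of $\kappa$\+pure-projective objects, since $\kappa$\+presentable objects are $\kappa$\+pure-projective; and $X^\bu$ is $\kappa$\+pure acyclic, because each of its short exact sequences $0\rarrow Z^i\rarrow X^i\rarrow Z^{i+1}\rarrow0$ has all terms in $\sC$ and satisfies $Z^i\in\sS^{\perp_1}$, hence is $\kappa$\+pure in $\sK$ by Proposition~\ref{kappa-pure-exact-structure-prop}(b) (the $\kappa$\+analogue of Lemma~\ref{B-is-absolutely-pure-in-C}). Theorem~\ref{kappa-neeman-stovicek-theorem} now applies and completes the proof of part~(b).

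For part~(a): I would work in $\sC$ with the exact structure inherited from the abelian structure of $\sK$. First, $\sC=\varinjlim^{(\kappa)}\sS$ is closed under direct summands in $\sK$ (a retract of an object of $\sC$ receives every morphism from a $\kappa$\+presentable object through an object of $\sS$, by the criterion in Proposition~\ref{kappa-varinjlim-varinjlim-closed}), so, $\sC$ being deconstructible by hypothesis, Theorem~\ref{deconstructible-class-is-grothendieck-type} shows that $\sC$ is an exact category of Grothendieck type, and hence has enough injective objects by Theorem~\ref{grothendieck-type-enough-injectives}. Applying Proposition~\ref{efficient-exact-hereditarily-generated}, together with Lemma~\ref{exact-category-hereditarily-generated}, to the efficient exact category $\sC$ and the set $\sS$ produces a hereditary complete cotorsion pair $(\sA',\sB')$ in $\sC$; as $\sC$ is closed under extensions in $\sK$, the relevant $\Ext^1$-groups of objects of $\sC$ coincide whether computed in $\sC$ or in $\sK$, so $\sA'=\sC\cap{}^{\perp_1}\sB'$ as in the statement, and $\sA\subset\sA'$ follows from the Eklof lemma. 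Now let $A^\bu$ be an acyclic complex of objects of $\sA$ whose cocycles lie in $\sC$, so $A^\bu$ is acyclic in the exact category $\sC$. Its cocycles belong to $\sA'$ if and only if the complex $\Hom_\sK(A^\bu,B')$ is acyclic for every $B'\in\sB'$, since $(\sA',\sB')$ is a hereditary cotorsion pair in $\sC$. To verify this, take an injective resolution $0\rarrow B'\rarrow D^0\rarrow D^1\rarrow\dotsb$ of $B'$ in $\sC$ and put $X^\bu=(B'\to D^\bu)$; then $X^\bu$ is an acyclic complex with all its terms and cocycles in $\sB$, because the injective objects of $\sC$ lie in $\sB=\sS^{\perp_1}\cap\sC$, while the cocycles of $D^\bu$ lie in $\sB'\subset\sB$ by hereditarity of $(\sA',\sB')$. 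Hence $\Hom_\sK(A^\bu,X^\bu)$ is acyclic by part~(b), while $\Hom_\sK(A^\bu,D^\bu)$ is acyclic because $A^\bu$ is acyclic in $\sC$ and $D^\bu$ is a bounded below complex of injective objects of $\sC$; since $X^\bu=(B'\to D^\bu)$, it follows that $\Hom_\sK(A^\bu,B')$ is acyclic, as required.

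The step I expect to be the main obstacle is this passage to the exact category $\sC$ in part~(a): proving that $\sC$ with its inherited exact structure is of Grothendieck type, and, above all, identifying the abstract hereditary complete cotorsion pair delivered by Proposition~\ref{efficient-exact-hereditarily-generated} with the concrete pair $\sB'=\sS^{\perp_{\ge1}}\cap\sC$, $\sA'=\sC\cap{}^{\perp_{\ge1}}\sB'$ of the statement; that is, keeping track of the comparison between $\Ext$-groups computed in $\sC$ and those computed in $\sK$, which is transparent only in degree~$1$. Part~(b), by contrast, should be routine once Propositions~\ref{stovicek-general-hill-prop} and~\ref{kappa-pure-exact-structure-prop} and Theorem~\ref{kappa-neeman-stovicek-theorem} are available.
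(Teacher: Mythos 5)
Your proposal is correct and follows essentially the same route as the paper's proof: part~(b) via the Hill-lemma filtration of $A^\bu$ (Proposition~\ref{stovicek-general-hill-prop}), Lemma~\ref{ext-homotopy-hom-lemma} and the Eklof lemma in $\bC(\sK)$, the $\kappa$\+purity of the short exact sequences of $X^\bu$, and Theorem~\ref{kappa-neeman-stovicek-theorem}; and part~(a) via the hereditary complete cotorsion pair $(\sA',\sB')$ in the Grothendieck-type exact category $\sC$ and an injective resolution of $B'$ there. The only hypothesis you leave unverified is that $\sS$ (hence $\sA'$) is generating in the exact category $\sC$, which Proposition~\ref{efficient-exact-hereditarily-generated} and Lemma~\ref{exact-category-hereditarily-generated}(b) require and which the paper supplies via Corollary~\ref{kernel-of-coproduct-onto-direct-limit-kappa-version}.
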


\begin{lem} \label{B-kappa-absolutely-pure-in-C}
 In the notation of Theorem~A or
Theorem~\ref{theorem-A-detailed-in-two-parts}(b), let\/
$0\rarrow B\rarrow L\rarrow C\rarrow0$ be a short exact sequence
in\/ $\sK$ with the terms $B$, $L$, $C\in\sC$.
 Assume that the object $B$ belongs to the class\/~$\sB$.
 Then the short exact sequence\/ $0\rarrow B\rarrow L\rarrow C
\rarrow0$ is $\kappa$\+pure in\/~$\sK$.
\end{lem}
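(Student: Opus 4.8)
The plan is to follow the proof of Lemma~\ref{B-is-absolutely-pure-in-C} almost verbatim, with Proposition~\ref{kappa-pure-exact-structure-prop} now playing the role that Lemma~\ref{restricted-pure-exact-structure} played there. As in that lemma, I expect to use only the hypotheses $C\in\sC$ and $B\in\sB$; the object $L$ could be arbitrary. First observe that $\sS$ is closed under $\kappa$\+small coproducts: any coproduct of fewer than~$\kappa$ objects of $\sS$ is a transfinitely iterated extension of those objects indexed by an ordinal $<\kappa$ (well-order the index set and take the filtration by partial coproducts), hence belongs to $\sS$ by assumption. So Proposition~\ref{kappa-pure-exact-structure-prop} is applicable to the class~$\sS$.

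Next I would invoke Proposition~\ref{kappa-pure-exact-structure-prop}(b): a short sequence with all three terms in $\sC$ is exact in the exact structure inherited by $\sC$ from the $\kappa$\+pure exact structure on $\sK$ --- equivalently, is $\kappa$\+pure in $\sK$ --- if and only if the functor $\Hom_\sK(S,{-})$ carries it to a short exact sequence of abelian groups for every object $S\in\sS$. Since the given sequence $0\rarrow B\rarrow L\rarrow C\rarrow0$ is already exact in the abelian category $\sK$ and $\Hom_\sK(S,{-})$ is left exact, it remains to show that $\Hom_\sK(S,L)\rarrow\Hom_\sK(S,C)$ is surjective for each $S\in\sS$; that is, that every morphism $S\rarrow C$ with $S\in\sS$ lifts along the epimorphism $L\rarrow C$.

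To see this, fix $S\in\sS$ together with a morphism $S\rarrow C$, and pass to the long exact sequence obtained by applying $\Hom_\sK(S,{-})$ to $0\rarrow B\rarrow L\rarrow C\rarrow0$: the obstruction to lifting lies in $\Ext^1_\sK(S,B)$, and this group vanishes because $B\in\sB\subset\sS^{\perp_1}$. Hence the lift exists, and the lemma follows.

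I do not anticipate any genuine obstacle: every ingredient --- the identification of the $\kappa$\+pure exact structure on $\sC$ in Proposition~\ref{kappa-pure-exact-structure-prop}, and the $\Ext^1$\+vanishing built into the definition $\sB=\sS^{\perp_1}\cap\sC$ --- is already available. If one preferred to bypass Proposition~\ref{kappa-pure-exact-structure-prop}, the same argument can be run directly from the definition of $\kappa$\+purity: any morphism into $C$ from a $\kappa$\+presentable object of $\sK$ factorizes through some $S\in\sS$ by Proposition~\ref{kappa-varinjlim-varinjlim-closed}, and such a morphism then lifts to $L$ by the $\Ext^1$ computation above.
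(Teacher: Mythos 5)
Your proposal is correct and follows essentially the same route as the paper: reduce via Proposition~\ref{kappa-pure-exact-structure-prop}(b) to lifting morphisms $S\rarrow C$ with $S\in\sS$ along $L\rarrow C$, and kill the obstruction in $\Ext^1_\sK(S,B)$ using $B\in\sB\subset\sS^{\perp_1}$. Your preliminary check that $\sS$ is closed under $\kappa$\+small coproducts (so that Proposition~\ref{kappa-pure-exact-structure-prop} applies) is a detail the paper leaves implicit, and your observation that only $B\in\sS^{\perp_1}$ and $C\in\sC$ are needed matches the paper's own remark.
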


\begin{proof}
 It is only important that $B\in\sS^{\perp_1}$, \ $L\in\sK$,
and $C\in\sC$.
 By Proposition~\ref{kappa-pure-exact-structure-subcategory}(b),
it suffices to check that any morphism $S\rarrow C$ with $S\in\sS$
lifts to a morphism $S\rarrow L$.
 This holds because $B\in\sB\subset\sS^{\perp_1}\subset\sK$.
\end{proof}

\begin{proof}[Proof of
Theorem~\ref{theorem-A-detailed-in-two-parts}(b)]
 The argument is similar to the proofs of
Theorem~\ref{theorem-0a-detailed-in-two-parts}(b)
and~\cite[Theorem~4.2]{BHP}.
 First of all, one has $\sS^{\perp_1}=\sA^{\perp_1}\subset\sK$ by
the Eklof lemma (Lemma~\ref{eklof-lemma}) applied in the abelian
category~$\sK$; so $\sS^{\perp_1}\cap\sC=\sA^{\perp_1}\cap\sC$.

 Without loss of generality we can assume that the terms of the complex
$A^\bu$ belong to $\Fil(\sS)$.
 Then, by Proposition~\ref{stovicek-general-hill-prop}, the complex
$A^\bu$ is filtered by (bounded below) complexes with the terms
belonging to~$\sS$.

 By Lemma~\ref{ext-homotopy-hom-lemma}, for any complex $A^\bu$ with
the terms in $\sA$ and any complex $B^\bu$ with the terms in $\sB$
we have an isomorphism of abelian groups
$$
 \Ext^1_{\bC(\sK)}(A^\bu,B^\bu[-1])\simeq
 \Hom_{\bH(\sK)}(A^\bu,B^\bu).
$$
 So, instead of showing that $\Hom_{\bH(\sK)}(A^\bu,X^\bu)=0$
as desired in the theorem, it suffices to prove that
$\Ext^1_{\bC(\sK)}(A^\bu,X^\bu[-1])=0$.
 Making use of the Eklof lemma (Lemma~\ref{eklof-lemma}) again,
the question reduces to showing that
$\Ext^1_{\bC(\sK)}(S^\bu,X^\bu[-1])=0$ for any complex $S^\bu$ with
the terms belonging to $\sS$ and any complex $X^\bu$ as in the theorem.
 Applying Lemma~\ref{ext-homotopy-hom-lemma} again, we conclude that it
suffices to show that any morphism of complexes $S^\bu\rarrow X^\bu$
is homotopic to zero.

 Finally, we observe that all $\kappa$\+presentable objects are
$\kappa$\+pure-projective in $\sK$ (by the definitions), while any
acyclic complex in $\sK$ with the objects of cocycles belonging
to $\sB$ is $\kappa$\+pure acyclic
(by Lemma~\ref{B-kappa-absolutely-pure-in-C}).
 Thus any morphism of complexes $S^\bu\rarrow X^\bu$ is homotopic to
zero by Theorem~\ref{kappa-neeman-stovicek-theorem}.
\end{proof}

\begin{proof}[Proof of
Theorem~\ref{theorem-A-detailed-in-two-parts}(a)]
 The assumption of deconstructibility presumes that the class $\sC$
is closed under transfinitely iterated extensions in~$\sK$.
 The class $\sC$ is also closed under direct summands, since it is
closed under $\kappa$\+direct limits by
Proposition~\ref{kappa-varinjlim-varinjlim-closed}.
 So we have $\sA\subset\sC$.

 We endow the full subcategory $\sC\subset\sK$ with the exact category
structure inherited from the abelian exact structure of~$\sK$.
 Then the class $\sS$ is generating in $\sC$ by
Corollary~\ref{kernel-of-coproduct-onto-direct-limit-kappa-version}.
 Moreover, the exact category $\sC$ is of Grothendieck type by
Theorem~\ref{deconstructible-class-is-grothendieck-type}, and
therefore it has enough injective objects by
Theorem~\ref{grothendieck-type-enough-injectives}.
 Therefore, $\sC\cap{}^{\perp_1}\sB'=\sC\cap{}^{\perp_{\ge1}}\sB'$
by Lemma~\ref{exact-category-hereditarily-generated}(a) applied
to the exact category $\sE=\sC$ and the class of objects $\sT=\sS$.

 We also have $\sA\subset{}^{\perp_1}\sB'$ by the Eklof lemma
(Lemma~\ref{eklof-lemma}).
 Hence $\sA\subset\sA'$.
 Now both the classes $\sA$ and $\sA'$ are generating in $\sC$, and
Lemma~\ref{exact-category-hereditarily-generated}(b) with
Proposition~\ref{efficient-exact-hereditarily-generated} tell that
$(\sA',\sB')$ is a hereditary complete cotorsion pair in~$\sC$.
 The hereditariness is important for our argument below.

 It is also worth noticing that $(\sA,\sB)$ is a (nonhereditary)
cotorsion pair in $\sC$ by Theorem~\ref{eklof-trlifaj-efficient-exact}
(since the class $\sS$ is generating in~$\sC$).
 The notation $\Fil(\sS)$ is unambiguous (means the same in $\sK$
and in~$\sC$) by Lemma~\ref{deconstructibility-transitive}.

 Let $A^\bu$ be an acyclic complex of objects from $\sA$ in $\sK$
with the objects of cocycles belonging to~$\sC$.
 Then $A^\bu$ is also an acyclic complex in the exact category~$\sC$.
 One can easily see that the objects of cocycles of $A^\bu$ belong
to $\sA'$ if and only if the complex of abelian groups
$\Hom_\sC(A^\bu,B)$ is acyclic for any object $B\in\sB'$.
 This holds because $(\sA',\sB')$ is a cotorsion pair in $\sC$,
or more specifically, because $\sA'=\sC\cap{}^{\perp_1}\sB'$.

 Now let $J^\bu$ be an injective resolution of the object $B$
in the exact category~$\sC$.
 So $0\rarrow B\rarrow J^0\rarrow J^1\rarrow J^2\rarrow\dotsb$ is
an acyclic complex in $\sK$ with the objects of cocycles belonging
to $\sC$ and the objects $J^n$ injective in~$\sC$.
 We observe that the objects of cocycles of the complex $J^\bu$
actually belong to $\sB'$, because all the injective objects of
$\sC$ belong to $\sB'$ and the class $\sB'$ is closed under
the cokernels of admissible monomorphisms in $\sC$ (as the cotorsion
pair $(\sA',\sB')$ in $\sC$ is hereditary).

 Denote by $X^\bu$ the acyclic complex $(B\to J^\bu)$.
 Then the complex of abelian groups $\Hom_\sC(A^\bu,X^\bu)$, i.~e.,
the direct product totalization of the bicomplex of Hom groups, is
acyclic by Theorem~\ref{theorem-A-detailed-in-two-parts}(b)
(which we have proved above).
 This holds because $A^\bu$ is a complex with the terms in $\sA$,
while $X^\bu$ is an acyclic complex with the terms in $\sB$ and
the objects of cocycles in~$\sB$ (recall that $\sB'\subset\sB$).

 On the other hand, the complex of abelian groups
$\Hom_\sC(A^\bu,J^\bu)$ is acyclic as well, since the complex
$A^\bu$ is acyclic in $\sC$ and $J^\bu$ is a bounded below complex
of injective objects in $\sC$ (cf.\ the proof of
Theorem~\ref{theorem-0a-detailed-in-two-parts}(a)).
 Since both the complexes $\Hom_\sC(A^\bu,X^\bu)$ and
$\Hom_\sC(A^\bu,J^\bu)$ are acyclic, and the complex $X^\bu$ has
the form $X^\bu=(B\to J^\bu)$, we can conclude that the complex
$\Hom_\sC(A^\bu,B)$ is acyclic.
\end{proof}

\begin{proof}[Proof of Theorem~A from Section~\ref{introd-theorem-A}]
 This is precisely the assertion of
Theorem~\ref{theorem-A-detailed-in-two-parts}(a).
\hbadness=2200
\end{proof}

\begin{cor} \label{theoremA-C-A-Aprime-periodicity-cor}
 Let\/ $\sK$ be a Grothendieck category, and let $\kappa$~be a regular
cardinal such that\/ $\sK$ is a locally $\kappa$\+presentable category.
 Let\/ $\sS\subset\sK$ be a class of (some) $\kappa$\+presentable
objects closed under transfinitely iterated extensions indexed by
ordinals smaller than~$\kappa$.
 Put\/ $\sC=\varinjlim^{(\kappa)}\sS\subset\sK$, and assume that
the class\/ $\sC$ is deconstructible in\/~$\sK$.
 Denote by\/ $\sA=\Fil(\sS)^\oplus$ the class of all direct summands of
transfinitely iterated extensions of objects from\/ $\sS$ in\/~$\sK$. 
 Put\/ $\sB'=\sS^{\perp_{\ge1}}\cap\sC$ and
$\sA'=\sC\cap{}^{\perp_1}\sB'=\sC\cap{}^{\perp_{\ge1}}\sB'$
(so\/ $\sA\subset\sA'\subset\sC$).
 Then, for any short exact sequence~\eqref{periodicity-sequence}
as in Section~\ref{introd-seven-items} with objects $L\in\sA$
and $M\in\sC$, one has $M\in\sA'$.
 In other words, any\/ $\sA$\+periodic object belonging to\/ $\sC$
actually belongs to\/~$\sA'$.
\end{cor}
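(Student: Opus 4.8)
The plan is to deduce this periodicity statement from the acyclic-complex version, Theorem~\ref{theorem-A-detailed-in-two-parts}(a) (which is just Theorem~A), exactly as Corollary~\ref{theorem-0a-C-A-periodicity-cor} was deduced from Theorem~\ref{theorem-0a-detailed-in-two-parts}(a). Concretely, I would invoke Proposition~\ref{chop-or-splice-prop}, the implication (1)\,$\Rightarrow$\,(2), applied to the ambient Grothendieck category~$\sK$ together with the three classes of objects $\sL=\sA$, \ $\sL'=\sA'$, and $\sM=\sC$.

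First I would check that these classes satisfy the hypothesis $\sL\subset\sL'\subset\sM$ of Proposition~\ref{chop-or-splice-prop}: this is precisely the chain $\sA\subset\sA'\subset\sC$ recorded in the statement, which holds because the class $\sC$ is deconstructible (hence closed under transfinitely iterated extensions and under direct summands, so $\sA=\Fil(\sS)^\oplus\subset\sC$), and because $\sA\subset{}^{\perp_1}\sB'$ by the Eklof lemma (Lemma~\ref{eklof-lemma}), giving $\sA\subset\sA'=\sC\cap{}^{\perp_1}\sB'$. Property~(1) of Proposition~\ref{chop-or-splice-prop} for this triple --- namely, that in any acyclic complex of objects from~$\sA$ whose objects of cocycles lie in~$\sC$ the cocycles in fact lie in~$\sA'$ --- is exactly the conclusion of Theorem~\ref{theorem-A-detailed-in-two-parts}(a), which we may assume proved.

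It then remains to apply the implication (1)\,$\Rightarrow$\,(2) of Proposition~\ref{chop-or-splice-prop}. This is the ``easy'' direction: it imposes no exactness or closure conditions on~$\sK$, and is obtained by splicing a doubly infinite sequence of copies of $0\rarrow M\rarrow L\rarrow M\rarrow0$ \eqref{periodicity-sequence} into an acyclic complex all of whose terms equal $L\in\sA$ and all of whose objects of cocycles equal~$M$; since $M\in\sC$ by hypothesis, property~(1) forces $M\in\sA'$, which is the assertion of the corollary. I do not expect any real obstacle here beyond having Theorem~A available: the only point that deserves a moment's attention is confirming that the splicing argument underlying Proposition~\ref{chop-or-splice-prop}, stated there for abelian categories, applies verbatim to a general Grothendieck category~$\sK$ --- which it does, as that direction uses nothing more than the existence of the doubly infinite splice.
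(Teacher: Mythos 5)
Your proposal is correct and coincides with the paper's own proof: the corollary is deduced from Theorem~\ref{theorem-A-detailed-in-two-parts}(a) by applying Proposition~\ref{chop-or-splice-prop}\,(1)\,$\Rightarrow$\,(2) to the abelian category $\sK$ with $\sL=\sA$, $\sL'=\sA'$, $\sM=\sC$. Your additional verification of the inclusions $\sA\subset\sA'\subset\sC$ and the remark that the splicing direction needs no extra hypotheses are both accurate.
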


\begin{proof}
 Follows from Theorem~\ref{theorem-A-detailed-in-two-parts}(a) by
Proposition~\ref{chop-or-splice-prop}\,(1)\,$\Rightarrow$\,(2) applied
to the abelian category $\sK$ and the classes of objects $\sL=\sA$,
\ $\sL'=\sA'$, \ $\sM=\sC$.
\end{proof}

 Finally, we use the opportunity to explicitly state the result
suggested in~\cite[Remark~4.11]{BHP} and deduce it from
the results of this paper.

\begin{cor}
 Let\/ $\sK$ be a locally $\kappa$\+presentable Grothendieck category.
 Denote by\/ $\sS$ the class of \emph{all} $\kappa$\+presentable
objects in~$\sK$.
 Put\/ $\sB=\sS^{\perp_1}$ and\/ $\sA={}^{\perp_1}\sB\subset\sK$; so
$\sA$ is the class of all direct summands of\/ $\sS$\+filtered objects
in\/~$\sK$.
 Furthermore, put\/ $\sB'=\sS^{\perp_{\ge1}}$ and\/
$\sA'={}^{\perp_1}\sB'={}^{\perp_{\ge1}}\sB'$;
so\/ $\sA\subset\sA'$ and $\sB\supset\sB'$.
 Then, for any short exact sequence~\eqref{periodicity-sequence}
as in Section~\ref{introd-seven-items} with objects $L\in\sA$
and $M\in\sK$, one has $M\in\sA'$; in other words, any\/
$\sA$\+periodic object in\/ $\sK$ belongs to\/~$\sA'$.
 In any acyclic complex of objects from\/ $\sA$ in\/ $\sK$,
the objects of cocycles belong to\/~$\sA'$.
\end{cor}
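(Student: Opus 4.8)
The plan is to derive this corollary as the special case of Theorem~\ref{theorem-A-detailed-in-two-parts} (that is, of Theorem~A) in which the class $\sS$ is taken to consist of \emph{all} $\kappa$\+presentable objects of~$\sK$. The crucial observation making this reduction work is that then $\sC=\varinjlim^{(\kappa)}\sS$ is the whole category~$\sK$: by Proposition~\ref{kappa-varinjlim-varinjlim-closed}, an object $L\in\sK$ lies in $\varinjlim^{(\kappa)}\sS$ provided that every morphism into~$L$ from a $\kappa$\+presentable object factors through an object of~$\sS$, and this is automatic when $\sS$ is the class of all $\kappa$\+presentable objects. So I will apply the earlier results with $\sC=\sK$ throughout.

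First I would verify that $\sS$ meets the standing hypothesis of Theorem~\ref{theorem-A-detailed-in-two-parts}: the class of all $\kappa$\+presentable objects is closed under colimits of diagrams with fewer than~$\kappa$ arrows~\cite[Proposition~1.16]{AR}, hence, in particular, under transfinitely iterated extensions indexed by ordinals smaller than~$\kappa$. Next I would check the deconstructibility assumption, which here amounts to saying that $\sK$ is deconstructible in itself; this is standard --- one filters each object $M\in\sK$ by the subobjects obtained by successively adjoining images of morphisms into~$M$ from a fixed generator~$G$, the successive subquotients being quotient objects of~$G$, of which there is only a set up to isomorphism. Finally, since $\sK$ is locally $\kappa$\+presentable, the class $\sS$ contains a set of generators, so $\Fil(\sS)$ is generating, and the Eklof--Trlifaj theorem (Theorem~\ref{eklof-trlifaj-theorem}(b)) yields $\Fil(\sS)^\oplus={}^{\perp_1}(\sS^{\perp_1})$. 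Thus the class $\sA=\Fil(\sS)^\oplus$ appearing in Corollary~\ref{theoremA-C-A-Aprime-periodicity-cor} coincides with the class $\sA={}^{\perp_1}\sB$ of the present statement.

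It remains to match up the remaining classes. Putting $\sC=\sK$ in the formulas of Theorem~\ref{theorem-A-detailed-in-two-parts}, the class $\sB'=\sS^{\perp_{\ge1}}\cap\sC$ becomes $\sS^{\perp_{\ge1}}$, and $\sA'=\sC\cap{}^{\perp_1}\sB'=\sC\cap{}^{\perp_{\ge1}}\sB'$ becomes ${}^{\perp_1}\sB'={}^{\perp_{\ge1}}\sB'$; the equality of these two descriptions follows from Lemma~\ref{exact-category-hereditarily-generated}(a) applied to the exact category $\sE=\sK$ with $\sT=\sS$, using that the Grothendieck category $\sK$ has enough injective objects. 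Similarly $\sB=\sS^{\perp_1}\cap\sC=\sS^{\perp_1}$, and this equals $\sA^{\perp_1}$ because $(\sA,\sB)$ is the cotorsion pair generated by~$\sS$. All of these are precisely the classes named in the corollary.

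With the identifications in place, Corollary~\ref{theoremA-C-A-Aprime-periodicity-cor} applied with $\sC=\sK$ gives that every $\sA$\+periodic object of~$\sK$ belongs to~$\sA'$, and Theorem~\ref{theorem-A-detailed-in-two-parts}(a) gives that in any acyclic complex of objects from~$\sA$ in~$\sK$, whose objects of cocycles automatically lie in $\sC=\sK$, the objects of cocycles belong to~$\sA'$ --- which is exactly the assertion to be proved. I expect the only steps needing genuine care to be the two structural facts invoked above, namely the identity $\sK=\varinjlim^{(\kappa)}\sS$ and the self-deconstructibility of~$\sK$. Both are well known, and once they are granted the corollary follows formally from the machinery of Section~\ref{generalized-fp-projective-periodicity-II-secn}.
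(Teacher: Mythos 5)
Your proposal is correct and follows essentially the same route as the paper's own proof: specialize Theorem~\ref{theorem-A-detailed-in-two-parts}(a) and Corollary~\ref{theoremA-C-A-Aprime-periodicity-cor} to the case $\sC=\varinjlim^{(\kappa)}\sS=\sK$, identifying $\sA=\Fil(\sS)^\oplus$ via the Eklof--Trlifaj theorem. The only cosmetic differences are that the paper obtains $\sK=\varinjlim^{(\kappa)}\sS$ directly from the definition of a locally $\kappa$\+presentable category where you use the factorization criterion of Proposition~\ref{kappa-varinjlim-varinjlim-closed}, and it cites \v St\!'ov\'\i\v cek's result for the self-deconstructibility of $\sK$ where you sketch the standard generator-filtration argument.
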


\begin{proof}
 One has $\sA=\Fil(\sS)^\oplus$ by
Theorem~\ref{eklof-trlifaj-theorem}(b), as the class $\sS$ is
generating in~$\sA$.
 Furthermore, by the definition of a locally $\kappa$\+presentable
category we have $\sK=\varinjlim^{(\kappa)}\sS$.
 So the class $\sC=\varinjlim^{(\kappa)}\sS=\sK$ is deconstructible
(in itself) by~\cite[Proposition~3.13]{Sto-ICRA}.
 Now the first assertion of the corollary is provided by
Corollary~\ref{theoremA-C-A-Aprime-periodicity-cor}, and
the second one by Theorem~A or
Theorem~\ref{theorem-A-detailed-in-two-parts}(a).
\end{proof}

\bigskip

\end{document}